\theoremstyle{plain}
\newtheorem{thm}{Theorem}[section]
\newtheorem{lemm}[thm]{Lemma}
\newtheorem{prop}[thm]{Proposition}
\theoremstyle{definition}
\newtheorem{rem}[thm]{Remark}
\renewcommand{\div}{\operatorname{div}}
\newcommand{\dB}{\dot{B}}
\newcommand{\fB}{\widehat{\dot{B}}{}}
\newcommand{\dH}{\dot{H}}
\newcommand{\supp}{\operatorname{supp}}
\renewcommand{\leq}{\leqslant}
\renewcommand{\geq}{\geqslant}
\newcommand{\ep}{\varepsilon}
\newcommand{\n}[1]{{\left\|#1\right\|}}
\newcommand{\f}{\frac}
\newcommand{\lp}[1]{\left[#1\right]}
\newcommand{\Mp}[1]{\left\{#1\right\}}
\renewcommand{\sp}[1]{\left(#1\right)}
\newcommand{\rhoe}{\rho_{\varepsilon}}
\newcommand{\ve}{v_{\varepsilon}}
\renewcommand{\ae}{a_{\varepsilon}}
\newcommand{\hve}{\widehat{v_{\varepsilon}}}
\newcommand{\hae}{\widehat{a_{\varepsilon}}}
\newcommand{\hf}{\widehat{f}}
\newcommand{\hg}{\widehat{g}}
\newcommand{\R}{\mathbb{R}}
\newcommand{\Grad}{\nabla}
\newcommand{\Ae}{A_{\varepsilon}}
\newcommand{\Ve}{V_{\varepsilon}}
\newcommand{\ael}{\widetilde{a_{\varepsilon}}}
\newcommand{\vel}{\widetilde{v_{\varepsilon}}}
\begin{document}
\title[Low Mach number limit for Navier--Stokes--Korteweg system]
{Low Mach number limit for the global large solutions to the $2$D Navier--Stokes--Korteweg system in the critical $\widehat{L^p}$ framework}
\author[M.Fujii]{Mikihiro Fujii}
\author[Y.Li]{Yang Li}
\address[M.Fujii]{Graduate School of Science, Nagoya City University, Nagoya, 467-8501, Japan}
\email[M.Fujii]{fujii.mikihiro@nsc.nagoya-cu.ac.jp}
\address[Y.Li]{School of Mathematical Sciences and Center of Pure Mathematics, Anhui University, Hefei, 230601, People's Republic of China}
\email[Y.Li]{lynjum@163.com}
\date{\today} 
%\thanks{}
\keywords{Low Mach number limit, Navier--Stokes--Korteweg system, critical Fourier--Besov spaces}
\subjclass[2020]{35B25, 76N06, 35B33}
\begin{abstract}
    In the present paper, we consider the compressible Navier--Stokes--Korteweg system on the $2$D whole plane and show that a unique global solution exists in the scaling critical Fourier--Besov spaces for arbitrary large initial data provided that the Mach number is sufficiently small.
    Moreover, we also show that the global solution converges to the $2$D incompressible Navier--Stokes flow in the singular limit of zero Mach number. 
    The key ingredient of the proof lies in the nonlinear stability estimates around the large incompressible flow via the Strichartz estimate for the linearized equations in Fourier--Besov spaces. 
\end{abstract}
\maketitle

\tableofcontents

\section{Introduction}\label{sec:intro}
%{\color{blue}Physical background on the model to be added later.} 

In this paper, we consider the initial value problem of the $2$D compressible Navier--Stokes--Korteweg system, which is provided by Korteweg \cite{Kor-1901} for describing the motion of capillary fluids.
\begin{align}\label{eq:NSK-1}
    \begin{dcases}
        \partial_t \rhoe +\div (\rhoe \ve) = 0, & t>0,x \in \mathbb{R}^2,\\
            \rhoe\sp{\partial_t \ve +  (\ve \cdot \nabla) \ve}
            +\frac{1}{\varepsilon^2}\nabla P(\rhoe)
            = \mathcal{L}\ve + \kappa \rho_{\varepsilon}  \Grad 
 \Delta\rho_{\varepsilon},
        & t>0,x \in \mathbb{R}^2,\\
        \rhoe(0,x)= \rho_{\varepsilon,0}(x),\quad \ve(0,x) = v_0(x), & x \in \mathbb{R}^2,
    \end{dcases}
\end{align}
where 
$\rhoe=\rhoe(t,x):(0,\infty) \times \mathbb{R}^2 \to (0,\infty)$ 
and
$\ve=\ve(t,x):(0,\infty) \times \mathbb{R}^2 \to \mathbb{R}^2$
denote 
the unknown density and velocity field of the fluid, respectively,
while
$\rho_{\varepsilon,0}=\rho_{\varepsilon,0}(x):\mathbb{R}^2 \to (0,\infty)$ 
and
$v_0=v_0(x):\mathbb{R}^2 \to \mathbb{R}^2$ are the given initial data.
The given real analytic function $P=P(\rho)$ of $\rho>0$ represents the pressure of the fluid and satisfy $c_{\infty}:=P'(\rho_{\infty})>0$ for some positive constant $\rho_{\infty}$.
The viscous term $\mathcal{L}\ve$ is given by
\begin{align}
    \mathcal{L}\ve := \mu \Delta \ve + (\mu + \lambda)\nabla \div \ve
\end{align} 
with viscosity coefficients $\mu$ and $\lambda$ satisfying $\mu>0$ and $\nu := 2\mu +\lambda > 0$. 
$\kappa $ is a positive constant acting as the capillary coefficient, while $\ep>0$ is the Mach number defined as the ratio of the speed of flows to that of sound. 
Since the pioneering work of Dunn and Serrin \cite{Dun-Ser-85}, 
there are a large number of mathematical studies on the system \eqref{eq:NSK-1}, see \cites{Ant-Hie-Mar-20,Dan-02-T,Dan-02-R,Dan-05-L,Dan-He-16,Des-Gre-99,Fuj-23-05,Fei-09,Has-20-pro,Hoff,Kre-Lor-Nau-91,Li-Yon-16} and the references therein.  
Especially, in the series of results \cites{Dan-02-T,Dan-02-R,Dan-05-L,Dan-He-16,Fuj-23-05}, they focused on invariant scaling structure and constructed the solutions in the scaling critical Besov spaces framework.
Here, the invariant scaling structure means that for a solution $(\rhoe,\ve)$ to \eqref{eq:NSK-1}, the scaled functions
\begin{align}\label{scaling} 
\rhoe^{(\lambda)}:= \rhoe (\lambda^2 t,\lambda x),
\quad 
\ve^{(\lambda)}:=\lambda \ve (\lambda^2 t,\lambda x)
\end{align}
also solve \eqref{eq:NSK-1} with the pressure replaced by $\lambda^2 P(\rhoe^{(\lambda)})$ for all $\lambda>0$. 
Corresponding to the scaling above, 
the function space $X$ is called the scaling critical if it holds 
$\n{\sp{\rhoe^{(\lambda)},\ve^{(\lambda)}}(0,\cdot)}_{X}
=
\n{\sp{\rhoe,\ve}(0,\cdot)}_{X}$
for all $\lambda>0$.
The aim of this paper is to prove the global existence of a unique solution in the scaling critical Fourier--Besov spaces
$\sp{\fB_{p,1}^{\frac{2}{p}-1}(\mathbb{R}^2) \cap \fB_{p,1}^{\frac{2}{p}}(\mathbb{R}^2)} \times \fB_{p,1}^{\frac{2}{p}-1 }(\mathbb{R}^2) $ with $2\leq p<4$ for arbitrarily large initial disturbance provided that the Mach number $\varepsilon$ is sufficiently small. 
Moreover, we show that the solution converges to the corresponding incompressible flow in the low Mach number limit $\varepsilon \downarrow 0$.

Before we state our main result precisely, we reformulate the system \eqref{eq:NSK-1} and recall some known results related to our study.
We may reformulate the problem in a convenient form.
By considering suitable scaling transform,
we may assume without loss of generality that 
\begin{align}
    \nu=\rho_{\infty}=c_{\infty}=1.
\end{align}
%{\color{blue} Using the basic identity
%\begin{align}
%\div (\Grad \rhoe\otimes \Grad \rhoe )= \Delta \rhoe \Grad \rhoe +\f{1}{2} \Grad |\Grad %\rhoe|^2,
%\end{align}
%it follows that $ \div \sp {\mathcal{K}(\rhoe)}= \kappa \rhoe \Grad \Delta \rhoe $.} 
We further assume that the initial density satisfies $\rho_{\varepsilon,0}=1+\varepsilon a_0$ for some bounded function $a_0=a_0(x):\mathbb{R}^2 \to \mathbb{R}$ independent of $\varepsilon$.
Let 
\begin{align}
    \ae:=\frac{\rhoe - 1}{\varepsilon}.
\end{align}
Then, in terms of $(\ae,\ve)$, the system \eqref{eq:NSK-1} is rewritten as 
\begin{align}\label{eq:NSK-2}
    \begin{dcases}
        \partial_t \ae + \dfrac{1}{\varepsilon} \div \ve = -\mathcal{A} \lp{\ae, \ve}, & t>0,x \in \mathbb{R}^2,\\
            \partial_t \ve - \mathcal{L} \ve
            -\kappa \Delta (\varepsilon \nabla \ae)
            +\frac{1}{\varepsilon}\nabla \ae
            = 
            -\mathcal{V}_{\varepsilon}\lp{\ae,\ve},
        & t>0,x \in \mathbb{R}^2,\\
        \ae(0,x)= a_0(x),\quad \ve(0,x) = v_0(x), & x \in \mathbb{R}^2,
    \end{dcases}
\end{align}
where the nonlinear terms $\mathcal{A}\lp{a,v}$ and $\mathcal{V}_{\varepsilon}\lp{a,v}$ are defined as
\begin{align}
    \mathcal{A}\lp{a,v}
    &:=
    \div(av),\\
    \mathcal{V}_{\varepsilon}\lp{a,v}
    &:={}
    (v \cdot \nabla )v 
    +
    \frac{\varepsilon a}{1+\varepsilon a}\mathcal{L}v
    +
    \frac{1}{\varepsilon} 
     \left( \frac{P'(1+\varepsilon a)}{1+\varepsilon a} -1 \right)  \nabla a\\
    &=:
    (v \cdot \nabla )v 
    +\mathcal{J}\sp{\varepsilon a}\mathcal{L}v
    +\frac{1}{\varepsilon} 
    \mathcal{K}\sp{\varepsilon a}  \nabla a.
\end{align}

Let us review the known results related to our study.
Although there are a large number of pioneering works on the well-posedness and incompressible limit for the compressible viscous fluid, we only focus on the results for scaling critical Besov spaces frameworks as this framework is closely related to our study.
We first focus on the usual compressible Navier--Stokes equations, that is equations \eqref{eq:NSK-2} with $\kappa=0$. 
It was Danchin \cites{Dan-01-L,Dan-00-G} who first proved the local and global well-posedness of the compressible Navier--Stokes equations for small data in the scaling critical Besov spaces framework. 
Later, his results are improved by \cites{Che-Mia-Zha-10,Cha-Dan-10,Dan-14,Has-11-1} in terms of the relaxation of smallness condition for the local solutions or general $L^p$-framework.
More precisely, \cites{Che-Mia-Zha-10,Dan-14} showed the existence for local solutions for any $(a_0,v_0) \in (\dB_{p,1}^{\frac{2}{p}-1}(\mathbb{R}^2) \cap \dB_{p,1}^{\frac{2}{p}}(\mathbb{R}^2)) \times \dB_{p,1}^{\frac{2}{p}-1}(\mathbb{R}^2)$ with $1 \leq p < 4$, while \cites{Che-Mia-Zha-15,Iw-Og-22} showed the ill-posedness for $p \geq 4$. In \cites{Cha-Dan-10,Has-11-1}, they proved the global well-posedness for the initial perturbation satisfying
$\n{(a_0,v_0)}_{\dB_{2,1}^{0}}^{\ell;\beta_0} + \n{(\nabla a_0, v_0)}_{\dB_{p,1}^{\frac{2}{p}-1}}^{h;\beta_0} \ll 1$
for $2\leq p < 4$ with some positive constant $\beta_0$.
Here, $\n{\cdot}_{\dB_{2,1}^0}^{\ell;\beta_0}$ and $\n{\cdot}_{\dB_{p,1}^{\frac{2}{p}-1}}^{h;\beta_0}$ denote the truncated Besov semi-norms\footnote{See Section \ref{sec:pre} for the definition.} for low and high frequencies, respectively.
For the incompressible limit, 
Danchin \cite{Dan-02-R} showed the small solution in critical $L^2$-Besov space converges to the incompressible flow as  $\varepsilon \downarrow 0$.
Danchin and He \cite{Dan-He-16} proved the same results in the framework of critical $L^p$ ($2\leq p < 4$) for high frequency part.
Very recently, the first author \cite{Fuj-23-05} relaxed the size condition of the initial data and proved that the global unique solution exists for arbitrary large initial perturbation in the critical $L^2$-Besov space provided that the Mach number is sufficiently small and showed that the solution converges to the corresponding large incompressible flow in some scaling critical norms. 
See \cites{Dan-Muc-17,Dan-Muc-19,Che-Zha-19,Wat-23} for the incompressible limit of $\lambda \to \infty$ type.

Next, we recall the known results for the Navier--Stokes--Korteweg system \eqref{eq:NSK-2} with $\kappa >0$.
Danchin and Desjardins \cite{Dan-Des-01} proved that for small initial perturbation $(a_0,v_0)$ in the scaling critical Besov space $(\dB_{2,1}^{0}(\mathbb{R}^2) \cap \dB_{2,1}^{1}(\mathbb{R}^2)) \times \dB_{2,1}^0(\mathbb{R}^2)$, there exists a global solution to \eqref{eq:NSK-2} with $\varepsilon=1$ in the class $a,\nabla a, v \in C([0,\infty);\dB_{2,1}^0(\mathbb{R}^2)) \cap L^1(0,\infty;\dB_{2,1}^2(\mathbb{R}^2))$.
See \cite{Kaw-Shi-Xu-21} for the results on the high frequency $L^p$ framework corresponding to \cites{Cha-Dan-10,Has-11-1}.
Here, we remark that the $L^1$-maximal regularity 
$a,\nabla a \in L^1(0,\infty;\dB_{2,1}^2(\mathbb{R}^2))$ for the density perturbation comes from the dissipation due to the Korteweg term $\kappa \Delta \nabla a$.
This additional dissipative structure provides us better regularity results than that for usual compressible Navier--Stokes equations. 
Indeed, \cite{Cha-Dan-Xu-21} proved the global small solutions to the compressible Navier--Stokes--Korteweg system possesses the analytic Gevrey regularity.
Kobayashi and Nakasato \cite{Kob-Nak-pre} made use of the Fourier--Besov spaces\footnote{See Section \ref{sec:pre} for the definition.} to improve the $L^2$-regularity for the low frequency part in \cites{Kaw-Shi-Xu-21} and show the global well-posedness of momentum formulated Navier--Stokes--Korteweg system in $(\fB_{p,1}^{\frac{2}{p}-1}(\mathbb{R}^2) \cap \fB_{p,1}^{\frac{2}{p}}(\mathbb{R}^2)) \times \fB_{p,1}^{\frac{2}{p}-1}(\mathbb{R}^2)$ with $1 \leq p \leq \infty$.

In the present paper, we prove the existence of a unique global solution to \eqref{eq:NSK-1} for arbitrarily \emph{large} initial data in the scaling critical Fourier--Besov space $(\fB_{p,1}^{\frac{2}{p}-1}(\mathbb{R}^2) \cap \fB_{p,1}^{\frac{2}{p}}(\mathbb{R}^2) ) \times \fB_{p,1}^{\frac{2}{p}-1}(\mathbb{R}^2)$ if the Mach number $\ep$ is suitably small. Furthermore, we consider the singular limit of zero Mach number and show that the global solution of the compressible Navier--Stokes--Korteweg system converges to the incompressible Navier--Stokes flow as $\varepsilon \downarrow 0$ in {critical} Fourier--Besov spaces. 
%Observe that system \eqref{eq:NSK-1} reduces to the classical compressible Navier--Stokes equations if the capillarity coefficient $\kappa=0$. The low Mach number limit of the compressible Navier--Stokes equations in critical Besov framework goes back to Danchin \cites{Dan-02-R,Dan-02-T} and Chemin et al. \cite{Bah-Che-Dan-11}. We refer to \cites{Dan-05-L,Dan-Ducom-16,Dan-He-16,Des-Gre-Lio-Mas-99,Des-Gre-99,Fei-09,Ha-Lo-98,Has-20} for more results on the low Mach number limit for compressible Navier--Stokes equations in the framework of weak solutions and critical Besov spaces. 
%In particular, the unique global \emph{large} solution was shown to exist in the scaling \emph{subcritical} case, provided that the Mach number is sufficiently small and the incompressible part of initial velocity admits the global incompressible flow. Very recently, Fujii \cite{Fuj-23-05} improved the results and proved the unique existence of global large solution in the scaling \emph{critical} Besov spaces of $L^2$-type.

Our main result of this paper reads as follows:
\begin{thm}\label{thm}
    Let $2 \leq p <  4$.
    Then, for any $a_0 \in \fB_{p,1}^{\frac{2}{p}-1}(\mathbb{R}^2) \cap \fB_{p,1}^{\frac{2}{p}}(\mathbb{R}^2)$ and $v_0 \in \fB_{p,1}^{\frac{2}{p}-1}(\mathbb{R}^2)$, there exists a positive constant $\varepsilon_0=\varepsilon_0(p,\mu,\kappa,P,a_0,v_0)$ such that for any $0<\varepsilon \leq \varepsilon_0$, the system \eqref{eq:NSK-2} possesses a unique global solution $(\ae,\ve)$ in the class 
    \begin{align}
        \ae, \varepsilon \nabla \ae, \ve 
        \in \widetilde{C}([0,\infty);\fB_{p,1}^{\frac{2}{p}-1}(\mathbb{R}^2)) \cap L^1(0,\infty;\fB_{p,1}^{\frac{2}{p}+1}(\mathbb{R}^2))
    \end{align}
    with $\rhoe(t,x) = 1 +\varepsilon \ae(t,x) >0$.
    Moreover, for $p<q\leq \infty$ and $1<r<\infty$, it holds
    \begin{align}
        \lim_{\varepsilon \to +0}
        \sp{
    \n{(\ae,\mathbb{Q}\ve)}_{\widetilde{L^r}(0,\infty;\dB_{q,1}^{\frac{2}{q}-1+\frac{2}{r}})}
        +
        \n{\mathbb{P}\ve - w}_{\widetilde{L^{\infty}}(0,\infty;\fB_{p,1}^{\frac{2}{p}-1})\cap L^1(0,\infty;\fB_{p,1}^{\frac{2}{p}+1})}
        }
        =0.
    \end{align}
    Here, $\mathbb{P}:=I+\nabla \div (-\Delta)^{-1}$ denotes the Helmholtz projection, $\mathbb{Q}:=I-\mathbb{P}$ is its orthogonal projection, and $w \in \widetilde{C}([0,\infty);\fB_{p,1}^{\frac{2}{p}-1}(\mathbb{R}^2)) \cap L^1(0,\infty;\fB_{p,1}^{\frac{2}{p}+1}(\mathbb{R}^2))$ is the unique global solution to the $2$D incompressible Navier--Stokes equations
    \begin{align}\label{eq:lim-1}
        \begin{cases}
            \partial_t w - \mu \Delta w + \mathbb{P} (w \cdot \nabla)w = 0, & t>0,x \in \mathbb{R}^2,\\
            \div w = 0, & t \geq 0, x \in \mathbb{R}^2,\\
            w(0,x) = \mathbb{P}v_0(x), & x \in \mathbb{R}^2.
        \end{cases}
    \end{align}
\end{thm}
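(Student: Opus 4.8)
The plan is to construct $(\ae,\ve)$ as a perturbation of the \emph{large} limiting incompressible flow $w$, and to quantify the smallness of that perturbation in powers of the Mach number $\ep$ by exploiting the dispersive behavior of the fast acoustic waves. First I would dispose of the limit problem \eqref{eq:lim-1}: since the $2$D incompressible Navier--Stokes equations are globally well-posed and $\mathbb{P}v_0 \in \fB_{p,1}^{\frac{2}{p}-1}(\R^2)$, there is a unique global solution $w \in \widetilde{C}([0,\infty);\fB_{p,1}^{\frac{2}{p}-1}) \cap L^1(0,\infty;\fB_{p,1}^{\frac{2}{p}+1})$. This profile serves as the reference state; crucially it is only required to be globally bounded in these critical norms, \emph{not} small.

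Second, I would reformulate \eqref{eq:NSK-2} around $w$. Applying the projections $\mathbb{P},\mathbb{Q}$, I introduce the compressible variables $(\ae,\mathbb{Q}\ve)$ and the incompressible remainder $u:=\mathbb{P}\ve-w$. The pair $(\ae,\mathbb{Q}\ve)$ obeys a damped acoustic system whose linear part carries the singular factors $\f{1}{\ep}\div$ and $\f{1}{\ep}\Grad$, together with the viscous dissipation $\mathcal{L}$ and the Korteweg dissipation $\kappa\Delta(\ep\Grad\ae)$; diagonalizing its symbol exhibits wave propagation at speed $\sim\ep^{-1}$ plus genuine dissipation at every frequency, the latter enhanced at high frequency by the Korteweg term (this is the origin of the $L^1$-maximal regularity for the density). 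The remainder $u$ solves a forced Navier--Stokes equation whose forcing is quadratic in $(\ae,\mathbb{Q}\ve,u)$ plus couplings that are linear in $w$.

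Third---and this is the heart of the argument---I would establish Strichartz estimates for the linearized acoustic system in the Fourier--Besov setting. Because the waves oscillate at frequency $\ep^{-1}$, the associated propagator disperses, and one gains both integrability in space (passing from $\fB_{p,1}$ to $\dB_{q,1}$ with $q>p$) and a positive power of $\ep$ in the space--time norm $\widetilde{L^r}(0,\infty;\dB_{q,1}^{\frac{2}{q}-1+\frac{2}{r}})$. These estimates are exactly what force $(\ae,\mathbb{Q}\ve)\to 0$ as $\ep\downarrow 0$ and simultaneously furnish the smallness needed to absorb the nonlinear interactions. I would then close a nonlinear stability estimate for $(\ae,\mathbb{Q}\ve,u)$ by a continuation argument, combining the Strichartz bounds for the acoustic part, the maximal-regularity bounds from $\mathcal{L}$ and the Korteweg term, and the global control of $w$, and estimating $\mathcal{A}$ and $\mathcal{V}_{\ep}$ through the product and commutator laws in $\fB_{p,1}$. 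For $\ep\leq\ep_0$ the perturbation norm stays $O(\ep^{\theta})$ on $[0,\infty)$, which together with local well-posedness in the critical Fourier--Besov space yields the global solution and, on letting $\ep\downarrow 0$, the stated convergence.

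The main obstacle is precisely the presence of the large reference flow $w$: the linearization around $w$ contains the large transport term $\mathbb{P}(w\cdot\Grad)$ and large coupling terms, so no smallness of the data is available to absorb them. Overcoming this requires the Strichartz estimate in Fourier--Besov spaces to extract the $\ep$-smallness of the acoustic component that compensates for the largeness of $w$, together with careful bookkeeping of which nonlinear interactions actually carry a factor of $\ep$ and which must instead be controlled by the dissipative $L^1$-in-time norms.
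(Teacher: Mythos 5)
Your overall strategy is the one the paper actually follows: treat the limit flow $w$ as a large but fixed reference profile, isolate the acoustic component, and compensate for the largeness of $w$ by the $\ep^{1/r}$ gain in a Fourier--Besov Strichartz estimate for the wave propagator, closing with maximal regularity from the viscosity and the Korteweg term. The one structural difference is how the bootstrap is organized. You keep $(\ae,\mathbb{Q}\ve)$ themselves as unknowns and assert that ``the perturbation norm stays $O(\ep^{\theta})$''; as stated this cannot hold for the energy norm $\widetilde{L^{\infty}}(0,\infty;\fB_{p,1}^{\frac{2}{p}-1})\cap L^1(0,\infty;\fB_{p,1}^{\frac{2}{p}+1})$ of $(\ae,\mathbb{Q}\ve)$, since the data $(a_0,\mathbb{Q}v_0)$ are large and there is no smallness at $t=0$; the smallness is available only in the dispersive norms $\widetilde{L^r}(0,\infty;\dB_{q,1}^{\frac{2}{q}-1+\frac{2}{r}})$ with $q>p$, $r<\infty$, and even there it is only $o(1)$ rather than a clean power of $\ep$, because the high-frequency tail $\n{(a_0,\nabla a_0,v_0)}_{\fB_{p,1}^{2/p-1}}^{h;1/\ep}$ of the large data vanishes without a rate. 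The paper sidesteps this bookkeeping by first solving the \emph{linear} acoustic system with data $(a_0,\mathbb{Q}v_0)$ and source $-\mathbb{Q}(w\cdot\nabla)w$ to produce an explicit profile $(\ael,\vel)$, and then estimating $\Ae=\ae-\ael$, $\Ve=\ve-w-\vel$: the nonlinearity splits into a known part (controlled by the Strichartz norms of $(\ael,\vel)$ and the high-frequency tail of $(\vel,w)$, both $o(1)$ as $\ep\downarrow 0$), a part linear in $(\Ae,\Ve)$ with large coefficients handled by the Gronwall lemma, and a quadratic part absorbed by the bootstrap smallness $\delta_0$. Your route is workable but you would need to run the continuation argument with this mixed family of norms (large energy norms, small Strichartz norms) explicitly; the auxiliary profile is what makes the closure a one-line Gronwall application. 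You should also note where $p<4$ enters (the product estimate $\fB_{p,1}^{2/p-1}\cdot\fB_{p,1}^{2/p}$ requires $2/p-1+2/p>0$) and that the case of $r$ outside the Strichartz-admissible range is recovered by interpolation with the $L^1$ maximal-regularity bound.
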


\begin{rem}
Let us compare Theorem \ref{thm} to related studies.
In the first author's previous study \cite{Fuj-23-05}, Theorem \ref{thm} with $p=2$ and $\kappa = 0$ is proved, that is, for any large data in the critical $L^2$ Besov space, there exists a unique global solution to the original compressible Navier--Stokes system \eqref{eq:NSK-2} with $\kappa=0$ provided that $\varepsilon$ is sufficiently small. 
He also proved that the solution converges to the corresponding incompressible flow as $\varepsilon \downarrow 0$ in some scaling critical norms. 
However, the proof of \cite{Fuj-23-05} will fails if we consider the critical $L^p$ framework, as in \cite{Dan-He-16}, due to some technical reasons related to high frequency regularity of the density perturbation. 
On the other hand, considering the compressible Naiver--Stokes--Korteweg system, the Korteweg term provides us a better regularity for the density perturbation, which enables us to overcome the aforementioned difficulty.
Moreover, our proof relies on the Gronwall lemma, which is much more simpler than that for the previous method \cite{Fuj-23-05}.
\end{rem}

We explain some notations which are used throughout this paper. 
Various generic positive constants are denoted by the same $C$ changing from line to line. For any $1\leq p \leq \infty$, we denote by $p'$ the H\"{o}lder conjugate of $p$. For two Banach spaces $X_1,X_2$ with $X_1\cap X_2 \neq \varnothing$, we set $\n{\cdot}_{ X_1\cap X_2 }:=\n{\cdot}_{X_1}+\n{\cdot}_{X_2}$. 
For any $z\in \mathbb{C}$, we denote by $\Re z$ its real part.

The rest of this paper is arranged as follows. 
In Section \ref{sec:pre}, we introduce the function spaces and prepare some useful lemmas. 
In Section \ref{sec:lin}, we perform a detailed analysis for the linearized equations. 
The proof of our main Theorem \ref{thm} is then completed in Section \ref{sec:pf} via a stability argument. 
Global well-posedness to the $2$D incompressible Navier--Stokes equations in the critical Fourier--Besov spaces is given in Appendix \ref{sec:a} and the local well-posedness to the compressible Navier--Stokes--Korteweg system is given in Appendix \ref{sec:LWP}.

\section{Preliminaries}\label{sec:pre}
\subsection{Function spaces}
In this subsection, we introduce the function spaces which will be used throughout this paper. We refer to \cites{Bah-Che-Dan-11,Chi-18,Sa-18} for more details.  
Let $\mathscr{S}(\R^2)$ be the set of all Schwartz functions on $\R^2$ and $\mathscr{S}'(\R^2)$ be the set of all tempered distributions on $\R^2$. For any $f\in \mathscr{S}(\R^2) $, we define the Fourier transform and the inverse Fourier transform as
\begin{align}
\mathscr{F}[f](\xi) =\hf(\xi)
=\int_{\R^2} e^{-i x \cdot \xi} f(x) dx,
\quad  
\mathscr{F}^{-1}[f](x) =(2\pi)^{-2} 
\int_{\R^2} e^{i x \cdot \xi} f(\xi) d\xi.
\end{align}
For any $1\leq p \leq \infty$, we denote by $L^p(\R^2)$ the usual Lebesgue space and $\widehat{L^p}(\R^2)$ the Fourier--Lebesgue space defined by
\begin{align}
\widehat{L^p}(\R^2):=
\left\{
f\in \mathscr{S}'(\R^2); \hf \in L^{1}_{\rm loc}(\R^2),\,\, 
\| f \|_{\widehat{L^p}}:=\| \hf \|_{L^{p'}} <\infty
\right\}. 
\end{align}
A sequence of functions $\{\phi_j \}_{j\in \mathbb{Z}} \subset \mathscr{S}(\R^2)$ is called the Littlewood--Paley decomposition if  
\begin{align}
 & \phi_j (x)=2^{2j}\phi_0(2^j x),
 \quad 0\leq \widehat{\phi_0}    \leq 1, \\
 & \rm{supp} \,\widehat{\phi_0}  \subset
 \left\{ 
 \xi \in \R^2; 2^{-1}\leq |\xi| \leq 2
 \right\}, \\
 & \sum_{j\in \mathbb{Z}} 
 \widehat{\phi_j}(\xi)=1,\,\, \text{ for all  }
 \xi \in \R^2 \backslash \{0\}. 
\end{align}
For any $j\in \mathbb{Z}$ and $f \in \mathscr{S}'(\R^2)$, we define the dyadic operator and low frequency cut-off operator as 
\begin{align}
\Delta_j f:= \mathscr{F}^{-1}[  \widehat{\phi_j}  \hf],
\qquad S_j f := \sum_{j'\leq j} \Delta_{j'}f. 
\end{align}
For any $s\in \R,1\leq p,\sigma \leq \infty$, we define the homogeneous Besov space $\dot{B}^s_{p,\sigma}(\R^2)$ as 
\begin{align}
\dot{B}^s_{p,\sigma} (\R^2)  & :=
\left\{
f \in \mathscr{S}'(\R^2)/ \mathscr{P}(\R^2); 
\| f\|_{\dot{B}^s_{p,\sigma}} <\infty
\right\}, \\
\| f\|_{\dot{B}^s_{p,\sigma}}& := 
\Big\|   
\{    2^{js} \| \Delta_j f \|_{L^p}  \}_{j\in \mathbb{Z}}
\Big \|_{\ell^{\sigma}}, 
\end{align}
where $\mathscr{P}(\R^2)$ denotes the set of all polynomials on $\R^2$. It is classical that when $s<2/p$ or $(s,\sigma)=(2/p,1)$, $\dot{B}^s_{p,\sigma}(\R^2)$ may be identified as 
\begin{align}
\left\{
f \in \mathscr{S}'(\R^2); 
\| f\|_{\dot{B}^s_{p,\sigma}} <\infty,\,\,
f=\sum_{j\in \mathbb{Z}} \Delta_j f \,\text{  in  } \mathscr{S}'(\R^2)
\right\}. 
\end{align}
For any $s\in \R,1\leq p,\sigma \leq \infty$, we define the homogeneous Fourier--Besov space $ \fB^{s}_{p,\sigma}(\R^2)$ as 
\begin{align}
\fB^s_{p,\sigma} (\R^2)  & :=
\left\{
f \in \mathscr{S}'(\R^2)/ \mathscr{P}(\R^2); 
\| f\|_{\fB^s_{p,\sigma}} <\infty
\right\}, \\
\| f\|_{\fB^s_{p,\sigma}}& := 
\Big\|   
\{    2^{js} 
\| \Delta_j f \|
_{\widehat{L^p}}  \}_{j\in \mathbb{Z}}
\Big \|_{\ell^{\sigma}}
=  \Big\|   
\{    2^{js} 
\| \widehat{\phi_j} \hf \|
_{L^{p'}}   \}_{j\in \mathbb{Z}}
\Big \|_{\ell^{\sigma}}. 
\end{align}  
For any $s\in \R,1\leq p_1\leq p_2 \leq \infty,1\leq \sigma_1\leq \sigma_2 \leq \infty$, it holds the continuous embedding of Sobolev type:
\begin{align} 
\fB^s_{p_1,\sigma_1} (\R^2) 
\hookrightarrow
\fB^{s-2(\f{1}{p_1}-\f{1}{p_2})}_{p_2,\sigma_2} (\R^2) .
\end{align}
Moreover, for any $s\in \R, 2\leq p_1\leq p_2 \leq \infty,1\leq \sigma_1\leq \sigma_2 \leq \infty$, the Hausdorff--Young inequality implies
\begin{align}
\fB^s_{p_1,\sigma_1} (\R^2) 
\hookrightarrow
\dot{B}^{s-2(\f{1}{p_1}-\f{1}{p_2})}_{p_2,\sigma_2} (\R^2) . 
\end{align} 
In case of $p=2$, the Plancherel theorem ensures that $\fB^s_{2,\sigma} (\R^2)=\dot{B}^{s}_{2,\sigma}(\R^2)$ in the sense of norm equivalence. Taking the time variable into account, it is natural to consider the Chemin--Lerner type spaces inspired by Chemin and Lerner \cite{Che-Ler-95}. For any $s\in \R,1\leq p,\sigma,r \leq \infty$ and an interval $I \subset \R$, we define the Chemin--Lerner space $\widetilde{L^r}(I;\dot{B}^{s}_{p,\sigma}(\R^2))$ as
\begin{align}
\widetilde{L^r}(I;\dot{B}^{s}_{p,\sigma}(\R^2))  & :=
\left\{
f :  I \rightarrow \mathscr{S}'(\R^2)/ \mathscr{P}(\R^2); 
\| f\|_{\widetilde{L^r}(I;\dot{B}^{s}_{p,\sigma})} <\infty
\right\}, \\
\| f\|_{\widetilde{L^r}(I;\dot{B}^{s}_{p,\sigma})} & := 
\Big\|   
\{    2^{js} \| \Delta_j f \|_{ L^r(I; L^p)    }  \}_{j\in \mathbb{Z}}
\Big \|_{\ell^{\sigma}}.  
\end{align}
Analogously, in the context of Fourier--Besov space, we define the Chemin--Lerner space $\widetilde{L^r}(I;\fB^{s}_{p,\sigma}(\R^2))$ as
\begin{align}
\widetilde{L^r}(I;\fB^{s}_{p,\sigma}(\R^2))  & :=
\left\{
f :  I \rightarrow \mathscr{S}'(\R^2)/ \mathscr{P}(\R^2); 
\| f\|_{\widetilde{L^r}(I;\fB^{s}_{p,\sigma})} <\infty
\right\}, \\
\| f\|_{\widetilde{L^r}(I;\fB^{s}_{p,\sigma})} & := 
\Big\|   
\{    2^{js} \| \Delta_j f \|_{ L^r(I; \widehat{L^p} )    }  \}_{j\in \mathbb{Z}}
\Big \|_{\ell^{\sigma}}
=\Big\|   
\{    2^{js} \| \widehat{\phi_j} \hf  \|_{ L^r(I; L^{p'} )    }  \}_{j\in \mathbb{Z}}
\Big \|_{\ell^{\sigma}}. 
\end{align}
For brevity, we set
\begin{align}
 \widetilde{C}([0,\infty);\fB^s_{p,\sigma}(\R^2))
 :=
 C([0,\infty);\fB^s_{p,\sigma}(\R^2))
 \cap
 \widetilde{L^{\infty}}
 (0,\infty;\fB^s_{p,\sigma}(\R^2)).
\end{align}
As a direct consequence of Minkowski inequality, we have the following embeddings
\begin{align}
 & \widetilde{L^r}(I;\dot{B}^{s}_{p,\sigma}(\R^2))
   \hookrightarrow
   L^r(I;\dot{B}^{s}_{p,\sigma}(\R^2))  
   \quad  \text{  if  } r \geq \sigma,  \\
  &  L^r(I;\dot{B}^{s}_{p,\sigma}(\R^2)) 
   \hookrightarrow 
   \widetilde{L^r}(I;\dot{B}^{s}_{p,\sigma}(\R^2))
   \quad  \text{  if  } r \leq \sigma,   \\
   & \widetilde{L^r}(I;\fB^{s}_{p,\sigma}(\R^2))
   \hookrightarrow
   L^r(I;\fB^{s}_{p,\sigma}(\R^2)) 
     \quad  \text{  if  } r \geq \sigma,  \\
     &  L^r(I;\fB^{s}_{p,\sigma}(\R^2)) 
     \hookrightarrow
     \widetilde{L^r}(I;\fB^{s}_{p,\sigma}(\R^2))
      \quad  \text{  if  } r \leq \sigma. 
\end{align}
For any $0<\alpha<\beta$, we define the truncated semi-norms according to high, middle and low frequency parts as 
\begin{align}
 \| f\|_{\fB^s_{p,\sigma}}^{h;\beta}  &:=
 \Big\|   
\{    2^{js} 
\| \Delta_j f \|
_{\widehat{L^p}}  \}_{2^{j} \geq \beta}
\Big \|_{\ell^{\sigma}},  \\
\| f\|_{\fB^s_{p,\sigma}}^{m;\alpha,\beta}  &:=
 \Big\|   
\{    2^{js} 
\| \Delta_j f \|
_{\widehat{L^p}}  \}_{ \alpha \leq  2^{j} < \beta}
\Big \|_{\ell^{\sigma}},  \\ 
\| f\|_{\fB^s_{p,\sigma}}^{\ell;\alpha}  &:=
 \Big\|   
\{    2^{js} 
\| \Delta_j f \|
_{\widehat{L^p}}  \}_{2^{j} < \alpha}
\Big \|_{\ell^{\sigma}},  \\
\| f\|_{\widetilde{L^r}(I;\fB^{s}_{p,\sigma})}^{h;\beta} & := 
\Big\|   
\{    2^{js} \| \Delta_j f \|_{ L^r(I; \widehat{L^p} )    }  \}_{2^{j} \geq \beta}
\Big \|_{\ell^{\sigma}}, \\ 
\| f\|_{\widetilde{L^r}(I;\fB^{s}_{p,\sigma})}^{m;\alpha,\beta} & := 
\Big\|   
\{    2^{js} \| \Delta_j f \|_{ L^r(I; \widehat{L^p} )    }  \}_{ \alpha \leq 2^{j} < \beta}
\Big \|_{\ell^{\sigma}}, \\ 
\| f\|_{\widetilde{L^r}(I;\fB^{s}_{p,\sigma})}^{\ell;\alpha} & := 
\Big\|   
\{    2^{js} \| \Delta_j f \|_{ L^r(I; \widehat{L^p} )    }  \}_{2^{j} < \alpha}
\Big \|_{\ell^{\sigma}}. 
\end{align}

\subsection{Useful lemmas}
In this subsection, we prepare several useful lemmas which play a crucial role in the proof of our main Theorem \ref{thm}.

We begin with the maximal regularity estimates for the heat equation in Chemin--Lerner spaces of Fourier--Besov type. For the proof, we refer to Nakasato \cite{Nak-22}. 
\begin{lemm}\label{lemm:max-heat}
Let $1\leq p,\sigma \leq \infty,\mu>0,s\in \R,0<T\leq \infty$ and $1\leq r_1\leq r \leq \infty$. Consider the heat equation 
\begin{align}
        \begin{cases}
            \partial_t u - \mu \Delta u = f, & t>0,x \in \mathbb{R}^2,\\
            u(0,x) = u_0(x), & x \in \mathbb{R}^2
        \end{cases}
\end{align}
with $u_0\in \fB^s_{p,\sigma}(\R^2)$ and $f\in \widetilde{L^{r_1}}(0,T;\fB^{s-2+\frac{2}{r_1} }_{p,\sigma}(\R^2))$. Then, there exists a positive constant $C=C(r_1,r,\mu)$ such that 
\begin{align}
\| u \|_{ 
\widetilde{L^{r}}(0,T;\fB^{s+\frac{2}{r} }_{p,\sigma})    
}
\leq 
C \left( 
\|   u_0 \|_{ 
\fB^s_{p,\sigma}  
}
+
\|  f \|_{ 
\widetilde{L^{r_1}}(0,T;\fB^{s-2+\frac{2}{r_1} }_{p,\sigma} )
}
\right).
\end{align}
\end{lemm}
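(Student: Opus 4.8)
The plan is to prove the estimate frequency-by-frequency via the Littlewood--Paley decomposition, exploiting the fact that in the Fourier--Lebesgue setting the heat semigroup acts as a transparent Fourier multiplier. First I would apply $\Delta_j$ to the equation; since $\Delta_j$ and $e^{\mu t\Delta}$ are both Fourier multipliers they commute, so Duhamel's formula gives
\begin{align}
\Delta_j u(t) = e^{\mu t\Delta}\Delta_j u_0 + \int_0^t e^{\mu(t-\tau)\Delta}\Delta_j f(\tau)\, d\tau.
\end{align}
The key ingredient is the frequency-localized decay of the semigroup measured in $\widehat{L^p}$. On the Fourier side $\widehat{e^{\mu t\Delta}\Delta_j g}(\xi) = e^{-\mu t|\xi|^2}\,\widehat{\phi_j}(\xi)\,\hg(\xi)$, and since $|\xi|\geq 2^{j-1}$ on $\supp\widehat{\phi_j}$ one has $e^{-\mu t|\xi|^2}\leq e^{-c\mu 2^{2j}t}$ with $c=1/4$. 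Because the $\widehat{L^p}$ norm is exactly $\|\widehat{\cdot}\|_{L^{p'}}$, this pointwise bound on the multiplier yields at once
\begin{align}
\n{e^{\mu t\Delta}\Delta_j g}_{\widehat{L^p}} \leq C e^{-c\mu 2^{2j}t}\n{\Delta_j g}_{\widehat{L^p}},
\end{align}
with no Bernstein-type convolution-kernel argument required; this is the one place where the Fourier--Besov framework is genuinely simpler than the classical Besov one. Inserting this into Duhamel's formula produces the pointwise-in-time bound
\begin{align}
\n{\Delta_j u(t)}_{\widehat{L^p}} \leq Ce^{-c\mu 2^{2j}t}\n{\Delta_j u_0}_{\widehat{L^p}} + C\int_0^t e^{-c\mu 2^{2j}(t-\tau)}\n{\Delta_j f(\tau)}_{\widehat{L^p}}\, d\tau.
\end{align}

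Next I would take the $L^r(0,T)$ norm in time, which is the correct order for the Chemin--Lerner structure since the time norm must precede the $\ell^\sigma$ summation. For the initial-data term I would use $\|e^{-c\mu 2^{2j}\,\cdot}\|_{L^r(0,T)}\leq C\,2^{-2j/r}$, so that multiplying by the weight $2^{j(s+2/r)}$ collapses the power of $2^j$ to $2^{js}$ and reproduces $\|u_0\|_{\fB^s_{p,\sigma}}$ after the $\ell^\sigma$ sum. The Duhamel term is a temporal convolution of $e^{-c\mu 2^{2j}\,\cdot}$ with $\|\Delta_j f\|_{\widehat{L^p}}$, so I would apply Young's convolution inequality in time with the exponent $\rho$ fixed by $1+1/r = 1/\rho + 1/r_1$; the hypothesis $r_1\leq r$ guarantees $\rho\geq 1$, making this legitimate. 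Using $\|e^{-c\mu 2^{2j}\,\cdot}\|_{L^\rho(0,T)}\leq C\,2^{-2j/\rho}$ and the algebraic identity $2/r - 2/\rho = -2 + 2/r_1$, the weight $2^{j(s+2/r)}$ transforms into exactly $2^{j(s-2+2/r_1)}$ in front of $\|\Delta_j f\|_{L^{r_1}(0,T;\widehat{L^p})}$.

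Finally I would take the $\ell^\sigma(\mathbb{Z})$ norm over $j$; the triangle inequality in $\ell^\sigma$ separates the two contributions and delivers the right-hand side $\|u_0\|_{\fB^s_{p,\sigma}} + \|f\|_{\widetilde{L^{r_1}}(0,T;\fB^{s-2+2/r_1}_{p,\sigma})}$, with the generic constant depending only on $r_1,r,\mu$ as claimed. The only point demanding real care is the bookkeeping of the Young exponent $\rho$ and the verification of the identity $2/r - 2/\rho = -2 + 2/r_1$ that forces the high-frequency gain to land precisely on the stated regularity index; the semigroup decay bound itself is elementary in this setting, and once it is in hand the remainder of the argument is routine.
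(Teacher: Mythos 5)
Your proof is correct: in the Fourier--Lebesgue setting the frequency-localized semigroup bound $\n{e^{\mu t\Delta}\Delta_j g}_{\widehat{L^p}}\leq e^{-\frac{\mu}{4}2^{2j}t}\n{\Delta_j g}_{\widehat{L^p}}$ is indeed immediate from the pointwise multiplier estimate on $\supp\widehat{\phi_j}$, and your Young-inequality bookkeeping ($1+1/r=1/\rho+1/r_1$ with $r_1\leq r$ guaranteeing $\rho\geq 1$, together with $2/r-2/\rho=-2+2/r_1$) lands exactly on the stated regularity indices, with a constant depending only on $r_1,r,\mu$. The paper does not prove this lemma itself but cites Nakasato for it; your argument is the standard one and matches in every essential respect the computation the authors do carry out for the analogous linearized estimate of Lemma \ref{lemm:lin-ene}, where the same dyadic exponential decay is integrated in time (there with $r_1=1$).
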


To consider the product estimate of two functions in Besov spaces, we recall the Bony para-product decomposition as follows:
\begin{align}
fg=T_f g + T_g f +R(f,g),
\end{align}
where
\begin{align}
 T_f g & :=\sum_{k\in \mathbb{Z} }S_{k-3} f \Delta_{k} g = 
 \sum_{k\in \mathbb{Z} }
 \left(
 \sum_{j \leq k-3} \Delta_{j} f
 \right) 
 \Delta_{k} g, \\
 R( f ,g) & :=\sum_{| k-j|\leq 2 }
 \Delta_{k} f \Delta_{j} g.
\end{align}

Next, we recall the following lemma on the operators $T,R$ in Besov spaces and Fourier--Besov spaces. 
\begin{lemm}\label{lemm-Tfg-Rfg}
\begin{enumerate}[(1)]
    \item { Let $1\leq p,p_1,p_2,\sigma \leq \infty$ and $s,s_1,s_2\in \R$ be subject to 
    \begin{align}
 \f{1}{p}=\f{1}{p_1}+\f{1}{p_2},\quad
  s=s_1+s_2, \quad
 s_1\leq 0.
    \end{align}
    Then, there exists a positive constant $C=C(s_1,s_2)$ such that
    \begin{align}
     \n{ T _f g  }_{ \dot{B}^{s}_{p,\sigma} }
\leq C
  \n{ f   }_{ \dot{B}^{s_1}_{p_1,1} }
  \n{ g }_{ \dot{B}^{s_2}_{p_2,\sigma} }
     \end{align}
     for all $f\in \dot{B}^{s_1}_{p_1,1}(\R^2)$ and $g \in \dot{B}^{s_2}_{p_2,\sigma}(\R^2)$, and 
     \begin{align}
     \n{ T_fg }_{ \fB^{s}_{p,\sigma} } 
\leq C
  \n{ f   }_{ \fB^{s_1}_{p_1,1} }
  \n{ g }_{ \fB^{s_2}_{p_2,\sigma} }
     \end{align}
     for all $f\in \fB^{s_1}_{p_1,1}(\R^2)$ and $g \in \fB^{s_2}_{p_2,\sigma}(\R^2)$.  
    }
    \item{ Let $1\leq p,p_1,p_2,\sigma,\sigma_1,\sigma_2 \leq \infty$ and $s,s_1,s_2\in \R$ be subject to 
    \begin{align}
 \f{1}{p}=\f{1}{p_1}+\f{1}{p_2},\quad
  s=s_1+s_2>0, \quad
 \f{1}{\sigma} \leq \f{1}{\sigma_1}+\f{1}{\sigma_2}      .
    \end{align}
     Then, there exists a positive constant $C=C(s_1,s_2)$ such that
     \begin{align}
     \n{ R (f ,g)  }_{ \dot{B}^{s}_{p,\sigma} }
\leq C
  \n{ f   }_{ \dot{B}^{s_1}_{p_1,\sigma_1} }
  \n{ g }_{ \dot{B}^{s_2}_{p_2,\sigma_2} }
     \end{align}
     for all $f\in \dot{B}^{s_1}_{p_1,\sigma_1}(\R^2)$ and $g \in \dot{B}^{s_2}_{p_2,\sigma_2}(\R^2)$, and 
     \begin{align}
     \n{ R (f ,g)  }_{ \fB^{s}_{p,\sigma} }
\leq C 
  \n{ f   }_{ \fB^{s_1}_{p_1,\sigma_1} }
  \n{ g }_{ \fB^{s_2}_{p_2,\sigma_2} }
     \end{align}
     for all $f\in \fB^{s_1}_{p_1,\sigma_1}(\R^2)$ and $g \in \fB^{s_2}_{p_2,\sigma_2}(\R^2)$. 
    }
\end{enumerate}
  
\end{lemm}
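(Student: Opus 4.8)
\section*{Proof proposal}

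The plan is to run the standard Bony paraproduct arguments while carrying the classical $L^p$ case and the Fourier--Lebesgue $\widehat{L^p}$ case in parallel, since the only ingredient genuinely specific to the Fourier--Besov setting is a H\"older-type product inequality for frequency-localized factors. In addition to the usual $\n{uv}_{L^p}\le\n{u}_{L^{p_1}}\n{v}_{L^{p_2}}$ (valid for $\f1p=\f1{p_1}+\f1{p_2}$), I would establish the analogue
\[
\n{uv}_{\widehat{L^p}}\le C\n{u}_{\widehat{L^{p_1}}}\n{v}_{\widehat{L^{p_2}}}
\]
by writing $\n{uv}_{\widehat{L^p}}=\n{\widehat u\ast\widehat v}_{L^{p'}}$ and invoking Young's convolution inequality with exponents $p_1',p_2'$: the hypothesis $\f1p=\f1{p_1}+\f1{p_2}$ is precisely equivalent to Young's admissibility relation $\f1{p_1'}+\f1{p_2'}=1+\f1{p'}$. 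Once this is available, both families of estimates reduce to identical manipulations of the weighted dyadic sequences $\{2^{js}\n{\Delta_j\cdot}_{L^p}\}$ and $\{2^{js}\n{\Delta_j\cdot}_{\widehat{L^p}}\}$, so I describe the argument once.

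For the paraproduct estimate in part (1), I first record the spectral localization: since $\supp\widehat{S_{k-3}f}\subset\{|\xi|\lesssim2^{k-3}\}$ and $\supp\widehat{\Delta_k g}\subset\{|\xi|\sim2^k\}$, the product $S_{k-3}f\,\Delta_k g$ has Fourier support in an annulus of size $\sim2^k$. Consequently $\Delta_j(T_f g)$ receives contributions only from indices with $|k-j|\le N_0$, and H\"older gives $\n{\Delta_j(T_f g)}\le\sum_{|k-j|\le N_0}\n{S_{k-3}f}\,\n{\Delta_k g}$ in the relevant $L^{p_i}$ or $\widehat{L^{p_i}}$ norms. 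The hypothesis $s_1\le0$ enters exactly here: bounding $\n{S_{k-3}f}\le\sum_{j'\le k-3}\n{\Delta_{j'}f}$ and factoring out $2^{-j's_1}$, this weight is nondecreasing in $j'$, hence is controlled by $2^{-ks_1}$, and the remaining $\ell^1$ sum reproduces $\n{f}_{\dB^{s_1}_{p_1,1}}$ (resp. its Fourier--Besov analogue). After multiplying by $2^{js}$ and using $s=s_1+s_2$ with $|k-j|\le N_0$, one is left with a finite-width convolution of the sequence $\{2^{ks_2}\n{\Delta_k g}\}$, and Young's inequality for sequences yields the claimed bound in $\ell^\sigma$.

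For the remainder estimate in part (2), the localization is different: each block $\Delta_k f\,\Delta_j g$ with $|k-j|\le2$ has Fourier support in a \emph{ball} $\{|\xi|\lesssim2^k\}$ rather than an annulus, because comparable high frequencies can cancel. Hence $\Delta_q R(f,g)$ collects all indices with $k\ge q-N_1$. Writing $c_k:=2^{ks_1}\n{\Delta_k f}$ and $d_k:=2^{ks_2}\n{\Delta_k g}$ and using $|k-j|\le2$ to replace $2^{-ks_1-js_2}$ by $2^{-ks}$, I obtain
\[
2^{qs}\n{\Delta_q R(f,g)}\le C\sum_{k\ge q-N_1}2^{(q-k)s}\,c_k\,\widetilde d_k,
\]
where $\widetilde d_k$ is a finite average of the $d_j$. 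Here $s=s_1+s_2>0$ is essential: it makes the kernel $\{2^{ms}\mathbf{1}_{m\le N_1}\}$ summable, so Young's inequality for sequences, combined with H\"older in $\ell^{\sigma_1},\ell^{\sigma_2}$ (which first produces $\ell^{\sigma''}$ with $\f1{\sigma''}=\f1{\sigma_1}+\f1{\sigma_2}$, then the embedding $\ell^{\sigma''}\hookrightarrow\ell^{\sigma}$ legitimated by $\f1\sigma\le\f1{\sigma_1}+\f1{\sigma_2}$), gives the desired estimate.

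The genuinely new point, and therefore the step I would be most careful about, is the H\"older inequality in $\widehat{L^p}$ underpinning both parts; everything downstream is the textbook Bony calculus transported verbatim to Fourier--Besov spaces, with $s_1\le0$ guaranteeing summability of the low-frequency tail in the paraproduct and $s>0$ guaranteeing summability of the diagonal tail in the remainder. I expect no essential obstacle beyond verifying that Young's convolution inequality supplies exactly the product law that $\f1p=\f1{p_1}+\f1{p_2}$ demands, and checking the endpoint behavior at $p,\sigma\in\{1,\infty\}$.
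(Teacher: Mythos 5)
Your proposal is correct and follows exactly the standard Bony calculus that the paper itself does not reproduce but delegates to Bahouri--Chemin--Danchin (Besov case) and Nakasato (Fourier--Besov case): annulus localization plus the $s_1\leq 0$ summation of the low-frequency tail for $T_fg$, ball localization plus $s>0$ summability for $R(f,g)$, with the single new ingredient being the H\"older law $\n{uv}_{\widehat{L^p}}\leq C\n{u}_{\widehat{L^{p_1}}}\n{v}_{\widehat{L^{p_2}}}$ obtained from Young's convolution inequality on the Fourier side. Your verification that $\f{1}{p}=\f{1}{p_1}+\f{1}{p_2}$ is equivalent to the Young admissibility relation $\f{1}{p_1'}+\f{1}{p_2'}=1+\f{1}{p'}$ is exactly the point on which the Fourier--Besov transcription rests, so no gap remains.
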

For the proof of Lemma \ref{lemm-Tfg-Rfg}, we refer to Bahouri et al. \cite{Bah-Che-Dan-11} for the Besov case, and Nakasato \cite{Nak-22} for the Fourier--Besov case.

Lemma \ref{lemm-Tfg-Rfg} immediately yields the following product estimate.
\begin{lemm}\label{lemm-fg-general}
Let $1\leq p,p_1,p_2,\sigma \leq \infty,\alpha_1,\alpha_2 \geq 0, s\in \R$ be subject to
\begin{align}
  s+\min\Mp{\f{2}{p_1},\f{2}{p'}} > 0.
\end{align}
Then, there exists a positive constant $C=C(p,p_1,p_2,\sigma,\alpha_1,\alpha_2)$ such that
\begin{align}
 &
 \n{fg}_{ \dot{B}^s_{p,\sigma}  }
 \leq 
 C
 \left(
\n{f}_{ \dot{B}^{ \f{2}{p_1} -\alpha_1 }_{p_1,1}  }
\n{g}_{ \dot{B}^{ s +\alpha_1 }_{p,\sigma}  }
+
\n{f}_{ \dot{B}^{ s +\alpha_2 }_{p,\sigma}  }
\n{g}_{ \dot{B}^{ \f{2}{p_2} -\alpha_2 }_{p_2,1}  }
 \right)
\end{align}
for all $f \in  \dB_{p_1,1}^{\frac{2}{p_1}-\alpha_1}(\mathbb{R}^2) \cap \dB_{p,\sigma}^{s+\alpha_2}(\mathbb{R}^2)$ and $g \in \dB_{p,\sigma}^{s+\alpha_1}(\mathbb{R}^2) \cap \dB_{p_2,1}^{\frac{2}{p_2}-\alpha_2}(\mathbb{R}^2)$, and 
\begin{align}
 \n{fg}_{ \fB^s_{p,\sigma}  }
 \leq 
 C
 \left(
\n{f}_{ \fB^{ \f{2}{p_1} -\alpha_1 }_{p_1,1}  }
\n{g}_{ \fB^{ s +\alpha_1 }_{p,\sigma}  }
+
\n{f}_{ \fB^{ s +\alpha_2 }_{p,\sigma}  }
\n{g}_{ \fB^{ \f{2}{p_2} -\alpha_2 }_{p_2,1}  }
 \right)
\end{align}
for all $f \in  \fB_{p_1,1}^{\frac{2}{p_1}-\alpha_1}(\mathbb{R}^2) \cap \fB_{p,\sigma}^{s+\alpha_2}(\mathbb{R}^2)$ and $g \in \fB_{p,\sigma}^{s+\alpha_1}(\mathbb{R}^2) \cap \fB_{p_2,1}^{\frac{2}{p_2}-\alpha_2}(\mathbb{R}^2)$.
\end{lemm}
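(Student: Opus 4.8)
The plan is to combine Bony's paraproduct decomposition $fg = T_f g + T_g f + R(f,g)$ with the bilinear bounds of Lemma \ref{lemm-Tfg-Rfg}, distributing the three pieces among the two terms on the right-hand side. I will present the homogeneous Besov case; the Fourier--Besov case is verbatim, since both Lemma \ref{lemm-Tfg-Rfg} and the Sobolev-type embedding hold in either scale.

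For the paraproduct $T_f g$, I first use the Sobolev embedding $\dB^{\f{2}{p_1}-\alpha_1}_{p_1,1} \hookrightarrow \dB^{-\alpha_1}_{\infty,1}$ (taking target integrability $\infty$) to place $f$ in an $L^\infty$-based space, and then apply the first estimate of Lemma \ref{lemm-Tfg-Rfg}(1) with $s_1 = -\alpha_1$, $s_2 = s + \alpha_1$ and integrabilities $\infty$ and $p$, for which $\f{1}{p} = 0 + \f{1}{p}$. Since $\alpha_1 \geq 0$, the constraint $s_1 \leq 0$ holds unconditionally, yielding $\n{T_f g}_{\dB^s_{p,\sigma}} \leq C \n{f}_{\dB^{\f{2}{p_1}-\alpha_1}_{p_1,1}} \n{g}_{\dB^{s+\alpha_1}_{p,\sigma}}$, the first term. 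Symmetrically, using $\dB^{\f{2}{p_2}-\alpha_2}_{p_2,1} \hookrightarrow \dB^{-\alpha_2}_{\infty,1}$ and $s_1 = -\alpha_2 \leq 0$, the paraproduct $T_g f$ is controlled by the second term. Neither bound uses the positivity hypothesis.

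The remainder $R(f,g)$ is the crux, and I will absorb it entirely into the first term. Pairing $f \in \dB^{\f{2}{p_1}-\alpha_1}_{p_1,1}$ with $g \in \dB^{s+\alpha_1}_{p,\sigma}$, Lemma \ref{lemm-Tfg-Rfg}(2) produces a bound in $\dB^{s+\f{2}{p_1}}_{q,\sigma}$ with $\f{1}{q} = \f{1}{p_1} + \f{1}{p}$, valid precisely when $s + \f{2}{p_1} > 0$ and $q \geq 1$; a final Sobolev embedding $\dB^{s+\f{2}{p_1}}_{q,\sigma} \hookrightarrow \dB^s_{p,\sigma}$, which holds because $2(\f{1}{q}-\f{1}{p}) = \f{2}{p_1}$ and $q \leq p$, recovers the target space. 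The admissibility $q \geq 1$ amounts to $\f{1}{p_1} + \f{1}{p} \leq 1$, i.e. $p_1 \geq p'$, and in this regime $\min\Mp{\f{2}{p_1}, \f{2}{p'}} = \f{2}{p_1}$, so the hypothesis is exactly what is required. When instead $p_1 < p'$ the output exponent would satisfy $q < 1$, so I first concede a little regularity through $\dB^{\f{2}{p_1}-\alpha_1}_{p_1,1} \hookrightarrow \dB^{\f{2}{p'}-\alpha_1}_{p',1}$, after which the remainder lands in $\dB^{s+\f{2}{p'}}_{1,\sigma}$ and embeds into $\dB^s_{p,\sigma}$; this branch needs $s + \f{2}{p'} > 0$, matching $\min\Mp{\f{2}{p_1},\f{2}{p'}} = \f{2}{p'}$. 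The two branches are summarised by the single condition $s + \min\Mp{\f{2}{p_1}, \f{2}{p'}} > 0$, which is the only place positivity enters.

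The main obstacle is precisely this remainder analysis: because the two factors carry different Lebesgue exponents, the natural output exponent $q$ determined by $\f{1}{q} = \f{1}{p_1} + \f{1}{p}$ can fall below $1$, and one must interpose the correct Sobolev embedding before invoking Lemma \ref{lemm-Tfg-Rfg}(2). Tracking this dichotomy is what forces the threshold to be the minimum of $\f{2}{p_1}$ and $\f{2}{p'}$ rather than $\f{2}{p_1}$ alone. Once the case distinction is fixed, the index bookkeeping is routine, and summing the three contributions gives the claimed estimate.
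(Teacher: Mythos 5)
Your proof is correct and is exactly the argument the paper intends: the authors state that Lemma \ref{lemm-Tfg-Rfg} ``immediately yields'' this product estimate, and your Bony decomposition with the $L^\infty$-based embeddings for the two paraproducts plus the $p_1\geq p'$ versus $p_1<p'$ dichotomy for the remainder (which is precisely where the threshold $s+\min\{2/p_1,2/p'\}>0$ arises) supplies the details they omit. The index bookkeeping in both branches checks out.
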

\begin{rem}
    From the second estimate above and  $\fB_{\infty,1}^0(\mathbb{R}^2)=\widehat{L^{\infty}}(\mathbb{R}^2)$, it follows that 
    for 
    $s>0$, $1\leqslant p,q \leqslant \infty$, 
    there exists a positive constant $C=C(p,q,s)$ such that
    \begin{align}\label{prod-est-B}
		\| fg \|_{\fB_{p,q}^s}
		\leqslant
		C \left ( \|f\|_{\fB_{p,q}^s}\|g\|_{\widehat{L^{\infty}}} 
	    +
        \|f\|_{\widehat{L^{\infty}}}\|g\|_{\fB_{p,q}^s}
		\right).
    \end{align}
    Here, we should remark that the estimate \eqref{prod-est-B} holds for the end-point case $(s,p,q)=(0,\infty,1)$ since $\n{fg}_{\widehat{L^{\infty}}} \leq \n{f}_{\widehat{L^{\infty}}}\n{g}_{\widehat{L^{\infty}}}$.
\end{rem}

In the next lemma, we recall the para-product estimate for truncated Besov norms. For the proof, we refer to \cite{Fuj-23-05}.
\begin{lemm}\label{lemm-Tfg-low}
Let $1\leq p,p_1,p_2,\sigma\leq \infty$ and $s,s_1,s_2\in \R$ be subject to
\begin{align}
 \f{1}{p}=\f{1}{p_1}+\f{1}{p_2},\quad
 s=s_1+s_2, \quad
 s_1\leq 0.
\end{align}
Then, there exists a positive constant $C=C(s_1,s_2)$ such that 
\begin{align}
&
\n{ T _f g  }_{ \dot{B}^{s}_{p,\sigma} }^{\ell;\beta}
\leq 
C
\n{ f }_{ \dot{B}^{s_1}_{p_1,1} }^{\ell;\beta}
\n{ g }_{ \dot{B}^{s_2}_{p_2,\sigma} }^{\ell;4 \beta},\\
&
\n{ T _f g  }_{ \dot{B}^{s}_{p,\sigma} }^{h;\beta}
\leq 
C
 \n{ f }_{ \dot{B}^{s_1}_{p_1,1} }
\n{ g }_{ \dot{B}^{s_2}_{p_2,\sigma} }^{h; \f{\beta}{4} }
\end{align}
for all $\beta>0$ and all $f,g$ such that the right-hand side are finite. 
\end{lemm}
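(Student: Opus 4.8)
The plan is to deduce both inequalities from the argument behind the untruncated paraproduct bound of Lemma \ref{lemm-Tfg-Rfg}~(1), the only genuinely new ingredient being the spectral localization of $T_fg$, which decides exactly which dyadic blocks of $f$ and $g$ survive once the output frequencies are truncated. First I would record the spectral support of the summands. Since $\supp\widehat{S_{k-3}f}\subset\{|\xi|\leq 2^{k-2}\}$ and $\supp\widehat{\Delta_k g}\subset\{2^{k-1}\leq|\xi|\leq 2^{k+1}\}$, the product $S_{k-3}f\,\Delta_k g$ has Fourier support in a fixed annulus of the form $\{c^{-1}2^k\leq|\xi|\leq c\,2^k\}$ bounded away from the origin. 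Hence there is a fixed integer $N_0$ with
\begin{align}
\Delta_j\sp{S_{k-3}f\,\Delta_k g}=0 \quad\text{unless}\quad |j-k|\leq N_0,
\end{align}
so that $\Delta_j(T_fg)=\sum_{|k-j|\leq N_0}\Delta_j(S_{k-3}f\,\Delta_k g)$. With the Littlewood--Paley normalization of Section \ref{sec:pre}, this finite overlap translates into the shift of the frequency thresholds by the factor $4$ (respectively $1/4$) appearing in the statement.

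Next I would run the standard block estimate. By H\"older's inequality with $\f1p=\f1{p_1}+\f1{p_2}$ and the $L^p$-boundedness of $\Delta_j$,
\begin{align}
\n{\Delta_j\sp{S_{k-3}f\,\Delta_k g}}_{L^p}\leq C\n{S_{k-3}f}_{L^{p_1}}\n{\Delta_k g}_{L^{p_2}}.
\end{align}
Writing $a_{j'}:=2^{j's_1}\n{\Delta_{j'}f}_{L^{p_1}}$, the hypothesis $s_1\leq 0$ is used precisely here to sum the low-frequency tail:
\begin{align}
\n{S_{k-3}f}_{L^{p_1}}\leq \sum_{j'\leq k-3}2^{-j's_1}a_{j'}= 2^{-ks_1}\sum_{j'\leq k-3}2^{(k-j')s_1}a_{j'}\leq C\,2^{-ks_1}\n{f}_{\dB^{s_1}_{p_1,1}},
\end{align}
since $2^{(k-j')s_1}\leq 1$ for $j'\leq k-3$ when $s_1\leq 0$. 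Multiplying by $2^{js}$, using $2^{js}\approx 2^{ks}$ on the support together with $s=s_1+s_2$, I get $2^{js}\n{\Delta_j(S_{k-3}f\,\Delta_k g)}_{L^p}\leq C\n{f}_{\dB^{s_1}_{p_1,1}}\,2^{ks_2}\n{\Delta_k g}_{L^{p_2}}$ for $|j-k|\leq N_0$; summing the finitely many shifts and applying Young's inequality for the resulting $\ell^\sigma$-convolution reproduces Lemma \ref{lemm-Tfg-Rfg}~(1).

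Finally I would insert the truncations into this computation. For the low-frequency seminorm we keep only the indices $2^j<\beta$; the localization then forces the surviving $\Delta_k g$ to satisfy $2^k<4\beta$ and the blocks $\Delta_{j'}f$ entering $S_{k-3}f$ to satisfy $2^{j'}<\beta$, so that only $\n{f}^{\ell;\beta}_{\dB^{s_1}_{p_1,1}}$ and $\n{g}^{\ell;4\beta}_{\dB^{s_2}_{p_2,\sigma}}$ appear on the right-hand side. For the high-frequency seminorm we keep only $2^j\geq\beta$; then the surviving $\Delta_k g$ satisfy $2^k\geq\beta/4$, whereas $S_{k-3}f$ still ranges over all the low frequencies of $f$, so the full norm $\n{f}_{\dB^{s_1}_{p_1,1}}$ and the truncated norm $\n{g}^{h;\beta/4}_{\dB^{s_2}_{p_2,\sigma}}$ appear. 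Substituting these truncated norms into the chain above yields the two claimed inequalities.

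The main obstacle is purely the bookkeeping of the cutoffs: verifying that the shifted thresholds $4\beta$ and $\beta/4$ exactly capture the blocks selected by the spectral localization $|j-k|\leq N_0$, and checking that in the low-frequency case the tail summation defining $\n{S_{k-3}f}_{L^{p_1}}$ calls only on frequencies below $\beta$, so that the \emph{truncated} factor $\n{f}^{\ell;\beta}_{\dB^{s_1}_{p_1,1}}$ genuinely suffices rather than the full norm. Everything else is the routine paraproduct estimate already encapsulated in Lemma \ref{lemm-Tfg-Rfg}.
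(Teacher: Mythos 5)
Your argument is correct and is exactly the standard paraproduct computation that the paper itself omits (Lemma \ref{lemm-Tfg-low} is stated with a citation to \cite{Fuj-23-05} rather than proved): the spectral localization $\supp\mathscr{F}[S_{k-3}f\,\Delta_k g]\subset\{2^{k-2}<|\xi|<\tfrac{9}{4}2^k\}$ forces $j-2\leq k\leq j+2$, which is precisely what converts the output truncation at $\beta$ into the thresholds $4\beta$ for $g$ (low frequencies) and $\beta/4$ for $g$ (high frequencies), while the tail sum defining $S_{k-3}f$ only calls on blocks $2^{j'}\leq 2^{j-1}<\beta$ in the low-frequency case, and the hypothesis $s_1\leq 0$ is used exactly where you place it. The only point worth being pedantic about is that obtaining the constants $4\beta$ and $\beta/4$ (rather than $8\beta$ and $\beta/8$) requires using the \emph{open} annuli on which $\widehat{\phi_j}$ is actually nonzero when computing the overlap condition, but this is a cosmetic matter of bookkeeping and does not affect the validity of the proof.
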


Let us end this section by preparing the composition lemma in the \emph{Fourier--Besov} spaces.
\begin{lemm}\label{lemm-comp} 
Let $1 \leq p \leq \infty$ and $s \in \mathbb{R}$ satisfy 
\begin{align}
    -\min 
    \Mp{\frac{2}{p},\frac{2}{p'}}
    <s\leq 
    \frac{2}{p}.
\end{align}
Let $F=F(s)$ be a $C^1$ function defined on some open interval including $0$.
Assume that $F(0)=0$ and $F$ is real analytic at $s=0$; $R_0$ denotes its radius of convergence.
Then, there exist two constants $0<\delta_0<1$ and $C=C(p,s,F,R_0)>0$ such that 
\begin{align}
    \n{F(u)}_{\fB_{p,1}^s}
    \leq
    C
    \n{u}_{\fB_{p,1}^s},
    \qquad
    \n{F(u)-F(v)}_{\fB_{p,1}^s}
    \leq
    C
    \n{u-v}_{\fB_{p,1}^s},
\end{align}
for all $u,v \in \fB_{p,1}^s(\mathbb{R}^2) \cap \fB_{p,1}^{\frac{2}{p}}(\mathbb{R}^2)$ satisfying $\n{u}_{\fB_{p,1}^{\frac{2}{p}}}, \n{v}_{\fB_{p,1}^{\frac{2}{p}}} \leq \delta_0  R_0$. 
\end{lemm}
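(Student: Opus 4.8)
The plan is to reduce everything to the homogeneous dyadic decomposition and exploit the real analyticity of $F$ at the origin to write $F(u)=\sum_{n\geq 1} \frac{F^{(n)}(0)}{n!}u^n$, so that the composition estimate becomes a convergent series of product estimates. First I would record the power-series expansion: since $F(0)=0$ and $F$ is real analytic with radius of convergence $R_0$, one has $F(u)=\sum_{n=1}^\infty c_n u^n$ with $|c_n|\leq C R_0^{-n}$ (a Cauchy estimate, valid after shrinking $R_0$ by a fixed factor). The key algebraic point is that $\fB_{p,1}^{2/p}(\mathbb{R}^2)$ is a Banach algebra: this follows directly from the product estimate \eqref{prod-est-B} together with the embedding $\fB_{p,1}^{2/p}\hookrightarrow \widehat{L^\infty}=\fB_{\infty,1}^0$ (Sobolev embedding with $s-2/p=0$), giving $\n{fg}_{\fB_{p,1}^{2/p}}\leq C\n{f}_{\fB_{p,1}^{2/p}}\n{g}_{\fB_{p,1}^{2/p}}$. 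By induction this yields $\n{u^n}_{\fB_{p,1}^{2/p}}\leq C^{n-1}\n{u}_{\fB_{p,1}^{2/p}}^n$.

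Next I would handle the endpoint index $s$ in the admissible range $-\min\{2/p,2/p'\}<s\leq 2/p$. Here the mechanism is the \emph{tame} product estimate: applying Lemma~\ref{lemm-fg-general} with the split $\fB^{2/p-\alpha_1}_{p,1}\times \fB^{s+\alpha_1}_{p,1}$ (and the symmetric term), the hypothesis $s+\min\{2/p_1,2/p'\}>0$ required by that lemma is exactly the condition $s>-\min\{2/p,2/p'\}$ that we have assumed, while $s\leq 2/p$ lets one keep the high-regularity factor at level $2/p$. This produces the tame bound $\n{fg}_{\fB_{p,1}^s}\leq C\big(\n{f}_{\fB_{p,1}^{2/p}}\n{g}_{\fB_{p,1}^s}+\n{f}_{\fB_{p,1}^s}\n{g}_{\fB_{p,1}^{2/p}}\big)$. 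Iterating this on $u^n=u\cdot u^{n-1}$ and combining with the algebra bound above gives the crucial \textbf{tame power estimate}
\begin{align}
    \n{u^n}_{\fB_{p,1}^s}
    \leq
    n\,C^{n-1}\,\n{u}_{\fB_{p,1}^{2/p}}^{n-1}\,\n{u}_{\fB_{p,1}^s}.
\end{align}
Summing the series $\sum_{n\geq 1}|c_n|\,\n{u^n}_{\fB_{p,1}^s}$ and using $|c_n|\leq CR_0^{-n}$ together with the smallness hypothesis $\n{u}_{\fB_{p,1}^{2/p}}\leq \delta_0 R_0$ gives a geometric series with ratio $\sim C\delta_0$, which converges once $\delta_0$ is chosen small enough; this yields the first inequality $\n{F(u)}_{\fB_{p,1}^s}\leq C\n{u}_{\fB_{p,1}^s}$.

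For the Lipschitz-type (difference) estimate I would use the telescoping identity $u^n-v^n=\sum_{k=0}^{n-1}u^k(u-v)v^{n-1-k}$, estimate each summand by the tame product bound placing the factor $u-v$ at regularity $s$ and all remaining factors at regularity $2/p$, and again sum the resulting series against $|c_n|$. The combinatorial factor this time is of order $n^2$ (from the $n$ telescoping terms, each already tame), still dominated by the geometric decay $(C\delta_0)^n$ after fixing $\delta_0$ small, giving $\n{F(u)-F(v)}_{\fB_{p,1}^s}\leq C\n{u-v}_{\fB_{p,1}^s}$. I expect the \textbf{main obstacle} to be bookkeeping the constants so that the smallness threshold $\delta_0$ depends only on $(p,s,F,R_0)$ and not on $n$: one must verify that the constant $C$ in both the algebra estimate and the tame product estimate is uniform across the iteration (it is, since Lemma~\ref{lemm-fg-general} and \eqref{prod-est-B} provide $n$-independent constants), so that the polynomial-in-$n$ prefactors are absorbed by the exponentially small $(C\delta_0)^n$. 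A secondary technical point to check is convergence of the series in $\mathscr{S}'/\mathscr{P}$ and the fact that $\n{u}_{\fB_{p,1}^{2/p}}\leq \delta_0 R_0<R_0$ keeps $u(x)$ pointwise within the domain of analyticity of $F$, which legitimizes the substitution of the power series.
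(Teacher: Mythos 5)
Your proposal follows essentially the same route as the paper's proof: expand $F$ in its power series about $0$, use the Banach algebra property of $\fB_{p,1}^{2/p}(\R^2)$ (obtained from Lemma \ref{lemm-fg-general}) to control $\n{u^{n-1}}_{\fB_{p,1}^{2/p}}$ geometrically, apply a tame product estimate to keep one factor at regularity $s$, and sum the series using the smallness $\n{u}_{\fB_{p,1}^{2/p}}\le \delta_0 R_0$; the only structural difference is that the paper treats $F(u)-F(v)$ via the mean value formula $F(u)-F(v)=\int_0^1 F'(\theta u+(1-\theta)v)\,d\theta\,(u-v)$ rather than your telescoping identity, and the two are interchangeable here. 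One point you should tighten: the \emph{symmetric} tame bound you quote, $\n{fg}_{\fB_{p,1}^s}\le C(\n{f}_{\fB_{p,1}^{2/p}}\n{g}_{\fB_{p,1}^s}+\n{f}_{\fB_{p,1}^s}\n{g}_{\fB_{p,1}^{2/p}})$, does not let you "place the factor $u-v$ at regularity $s$": in the telescoping step it leaves terms proportional to $\n{u-v}_{\fB_{p,1}^{2/p}}$, which is not dominated by $\n{u-v}_{\fB_{p,1}^s}$ when $s<2/p$. You need the one-sided estimate $\n{fg}_{\fB_{p,1}^s}\le C\n{f}_{\fB_{p,1}^{2/p}}\n{g}_{\fB_{p,1}^s}$, which Lemma \ref{lemm-fg-general} does provide with the choice $(\alpha_1,\alpha_2)=(0,\tfrac{2}{p}-s)$ — legitimate precisely because $s\le \tfrac{2}{p}$ — and which is also what the paper's own computation uses implicitly when it writes $\n{u^n}_{\fB_{p,1}^s}\le C\n{u^{n-1}}_{\fB_{p,1}^{2/p}}\n{u}_{\fB_{p,1}^s}$ with a single term. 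With that substitution your argument closes.
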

\begin{proof} 
    The strategy of the proof is based on \cite{Kob-Nak-pre}.
    We first notice that by Lemma \ref{lemm-fg-general}, there exists a constant $C_{*} > 1$ such that 
    \begin{align}
        \n{fg}_{\fB_{p,1}^{\frac{2}{p}}}
        \leq
        C_*
        \n{f}_{\fB_{p,1}^{\frac{2}{p}}}
        \n{g}_{\fB_{p,1}^{\frac{2}{p}}}
    \end{align}
    for all $f,g \in \fB_{p,1}^{\frac{2}{p}}(\mathbb{R}^2)$.
    Let $\delta_0:=1/C_*^2$.
    We expand $F$ as
    \begin{align}
        F(s)=\sum_{n=1}^{\infty}a_n s^n, \qquad |s| < R_0.
    \end{align}
    Then, it holds 
    \begin{align}
        \n{F(u)}_{\fB_{p,1}^s}
        \leq{}&
        \sum_{n=1}^{\infty}
        |a_n|
        \n{u^n}_{\fB_{p,1}^s}\\
        \leq{}&
        C
        \sum_{n=1}^{\infty}
        |a_n|
        \n{u^{n-1}}_{\fB_{p,1}^{\frac{2}{p}}}
        \n{u}_{\fB_{p,1}^s}\\
        \leq{}&
        C
        \sum_{n=1}^{\infty}
        |a_n|
        \sp{C_*\n{u}_{\fB_{p,1}^{\frac{2}{p}}}}^{n-1}
        \n{u}_{\fB_{p,1}^s}\\
        \leq{}&
        C
        \left(
        \sum_{n=1}^{\infty}
        |a_n|
        \sp{ \sqrt{\delta_0} R_0}^{n-1}
        \right) 
        \n{u}_{\fB_{p,1}^s},
    \end{align}
    which provides the first estimate.

    For the second estimate, the mean value formula implies
    \begin{align}
        F(u)-F(v)
        ={}&
        \int_0^1
        F'(\theta u + (1-\theta)v)d\theta 
        (u-v)
        \\
        ={}&
        \int_0^1
        \sum_{n=1}^{\infty}na_n(\theta u + (1-\theta)v)^{n-1}d\theta 
        (u-v)
    \end{align}
    and
    we have
    \begin{align}
        \n{F'(\theta u + (1-\theta)v)}_{\fB_{p,1}^{\frac{2}{p}}}
        &\leq
        \sum_{n=1}^{\infty}
        n|a_n|
        \n{\sp{\theta u + (1-\theta)v}^{n-1}}_{\fB_{p,1}^{\frac{2}{p}}}\\
        &\leq
        C
        \sum_{n=1}^{\infty}
        n|a_n|
        \sp{C_*\n{\theta u + (1-\theta)v}_{\fB_{p,1}^{\frac{2}{p}}}}^{n-1}\\
        &\leq
        C
        \sum_{n=1}^{\infty}
        n|a_n|\sp{ \sqrt{\delta_0} R_0}^{n-1}.
    \end{align} 
    Thus, we obtain
    \begin{align}
        \n{F(u)-F(v)}_{\fB_{p,1}^s}
        &\leq
        C
        \int_0^1
        \n{F'(\theta u + (1-\theta)v)}_{\fB_{p,1}^{\frac{2}{p}}}
        d\theta \times 
        \n{u-v}_{\fB_{p,1}^s}
        \\
        &\leq
        C
        \left( 
        \sum_{n=1}^{\infty}
        n|a_n|\sp{  \sqrt{\delta_0} R_0}^{n-1} \right)
        \n{u-v}_{\fB_{p,1}^s},
    \end{align}
    which completes the proof.
\end{proof}

\section{Linear estimates}\label{sec:lin}
In this section, we establish several estimates for the solutions to the linearized system:
\begin{align}\label{eq:NSK-lin-1}
    \begin{dcases}
        \partial_t \ae + \dfrac{1}{\varepsilon} \div \ve = f, & t>0,x \in \mathbb{R}^2,\\
            \partial_t \ve - \mathcal{L} \ve
            -\kappa \Delta (\varepsilon \nabla \ae)
            +\frac{1}{\varepsilon}\nabla \ae
            = 
            g,
        & t>0,x \in \mathbb{R}^2,\\
        \ae(0,x)= a_0(x),\quad \ve(0,x) = v_0(x), & x \in \mathbb{R}^2,
    \end{dcases}
\end{align}  
where $f$ and $g$ are given external forces. To begin with, we prove the maximal regularity estimates for the solutions to \eqref{eq:NSK-lin-1} in Chemin--Lerner spaces. 
%{\color{blue}   It shows that solutions to \eqref{eq:NSK-lin-1} behave like the heat kernel in Fourier space due to the dispersive effect of Korteweg tensor. }
\begin{lemm}\label{lemm:lin-ene}
    There exists a positive constant $C$ such that
    \begin{align}
        \n{(\ae,\varepsilon \nabla \ae, \ve)}_{\widetilde{L^r}(0,T;\fB_{p,\sigma}^{s+\frac{2}{r}})}
        \leq{}&
        C\n{(a_0,\varepsilon \nabla a_0, v_0)}_{\fB_{p,\sigma}^s}\\
        &+
        C
        \n{(f,\varepsilon \nabla f, g)}_{\widetilde{L^1}(0,T;\fB_{p,\sigma}^s)}
    \end{align}
    for all $1 \leq p,\sigma,  r \leq \infty$, $s \in \mathbb{R}$, and $0<T \leq \infty$.
\end{lemm}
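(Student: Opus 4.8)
The plan is to prove the estimate block by block in frequency, exploiting that in the Fourier--Besov scale each dyadic piece is governed, on the support of $\widehat{\phi_j}$, by an ordinary differential equation in the Fourier variable $\xi$ with $|\xi|\sim 2^j$. Since $\widehat{\phi_j}\widehat{u}$ lives in $L^{p'}_\xi$, it suffices to produce a pointwise-in-$\xi$ bound of the form
\[
\sp{|\widehat{a}|+\varepsilon|\xi||\widehat{a}|+|\widehat{v}|}(t,\xi)
\lesssim e^{-c|\xi|^2 t}\,\sp{\,\cdots\,}(0,\xi)
+\int_0^t e^{-c|\xi|^2(t-\tau)}
\sp{|\widehat f|+\varepsilon|\xi||\widehat f|+|\widehat g|}(\tau,\xi)\,d\tau,
\]
with $c>0$ independent of $\varepsilon$ and $\xi$. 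Taking $\|\cdot\|_{L^{p'}_\xi}$ over the annulus $|\xi|\sim 2^j$, then $L^r_t$ via Young's inequality against the kernel $e^{-c2^{2j}t}$ (whose $L^r_t(0,\infty)$ norm is $\sim 2^{-2j/r}$), and finally the weighted $\ell^\sigma_j$ sum reproduces exactly the claimed maximal-regularity estimate, precisely as in the proof of Lemma~\ref{lemm:max-heat}. To set up the ODE I would first apply the Helmholtz decomposition: $\mathbb{P}\ve$ solves the pure heat equation $\partial_t\mathbb{P}\ve-\mu\Delta\mathbb{P}\ve=\mathbb{P}g$, since $\mathbb{P}$ annihilates every gradient term, so Lemma~\ref{lemm:max-heat} disposes of it directly. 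It then remains to treat the coupled system for $\ae$ and the compressible scalar $d:=|\nabla|^{-1}\div\ve$, noting $\n{\mathbb{Q}\ve}_{\fB^s_{p,\sigma}}\sim\n{d}_{\fB^s_{p,\sigma}}$ because the homogeneous multiplier $\nabla|\nabla|^{-1}$ is bounded on $\widehat{L^p}$.

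Applying $|\nabla|^{-1}\div$ to the velocity equation reduces $(\ae,d)$ to the $2\times2$ Fourier system with matrix $\left(\begin{smallmatrix}0&-|\xi|/\varepsilon\\ |\xi|/\varepsilon+\kappa\varepsilon|\xi|^3&-\nu|\xi|^2\end{smallmatrix}\right)$. The dominant, skew part of this matrix is not symmetric, so a naive energy estimate would cost a factor $e^{C|\xi|t/\varepsilon}$; the remedy is to rescale the density by the Korteweg weight, setting $\widetilde a:=\sqrt{1+\kappa\varepsilon^2|\nabla|^2}\,\ae$, which symmetrizes the off-diagonal entries into $\pm\omega$ with $\omega:=\sqrt{|\xi|^2/\varepsilon^2+\kappa|\xi|^4}$. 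This rescaling is exactly why $\varepsilon\nabla\ae$ enters the statement: for fixed $\kappa$, $|\widehat{\widetilde a}|$ is comparable to $\sqrt{1+\varepsilon^2|\xi|^2}\,|\widehat{\ae}|$, that is, to $|\widehat{\ae}|+\varepsilon|\xi||\widehat{\ae}|$. In the variables $(\widetilde a,d)$ the generator is skew-symmetric plus the negative diagonal $\mathrm{diag}(0,-\nu|\xi|^2)$, so the plain energy $|\widetilde a|^2+|d|^2$ dissipates only the velocity component $d$.

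To recover dissipation for the density I would employ a Danchin-type Lyapunov functional $\mathcal E:=|\widetilde a|^2+|d|^2-2\eta\,\Re(\widetilde a\overline d)$ with a frequency-dependent weight $\eta=\eta(\xi)\sim\nu|\xi|^2/\omega$, chosen small enough (using $\omega\geq\sqrt\kappa|\xi|^2$, so that $\eta\lesssim1$) that $\mathcal E\sim|\widetilde a|^2+|d|^2$. A direct computation, after Young's inequality on the cross terms, should give $\frac{d}{dt}\mathcal E\leq-c|\xi|^2\mathcal E+C\sqrt{\mathcal E}\,(|\widehat{\widetilde f}|+|\widehat{\widetilde g}|)$, where $\widetilde f=\sqrt{1+\kappa\varepsilon^2|\nabla|^2}f$ and $\widetilde g=|\nabla|^{-1}\div g$; the point is that $\eta\omega\sim|\xi|^2$ converts the velocity dissipation into a genuine $|\xi|^2$-decay of the density, uniformly in $\varepsilon$. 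Integrating this differential inequality yields the pointwise bound above, the forcing $|\widehat{\widetilde f}|+|\widehat{\widetilde g}|$ being dominated by $|\widehat f|+\varepsilon|\xi||\widehat f|+|\widehat g|$ since Riesz multipliers are bounded on $\widehat{L^p}$, and combining with the heat estimate for $\mathbb{P}\ve$ closes the argument.

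The step I expect to be the main obstacle is precisely this hypocoercive transfer: the viscosity dissipates only the velocity, and one must show that the acoustic coupling hands a full $|\xi|^2$ rate of decay to the density uniformly across the oscillatory low-frequency regime $|\xi|\lesssim1/\varepsilon$ and the parabolic high-frequency regime $|\xi|\gtrsim1/\varepsilon$, and uniformly as $\varepsilon\downarrow0$. Arranging the weight $\eta(\xi)$ and the symmetrizing factor $\sqrt{1+\kappa\varepsilon^2|\xi|^2}$ to cooperate so that no constant degenerates at the transition frequency $|\xi|\sim1/\varepsilon$, where the eigenvalues of the matrix collide, is the crux; the Korteweg term, through the lower bound $\omega\geq\sqrt\kappa|\xi|^2$, is exactly what makes this uniform control possible and is the structural feature absent from the pure Navier--Stokes case.
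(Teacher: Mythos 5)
Your proposal is correct and follows essentially the same route as the paper: a Fourier-space energy/hypocoercivity argument with a cross term of effective size $\varepsilon|\xi|\,\Re(\hae\,\overline{\hat d})$, giving a pointwise bound $e^{-c|\xi|^2t}$ uniform in $\varepsilon$ (the uniformity across the transition $|\xi|\sim 1/\varepsilon$ resting, exactly as you say, on the Korteweg lower bound $\omega\geq\sqrt{\kappa}|\xi|^2$), which is then transferred to the Chemin--Lerner--Fourier--Besov norms. The paper merely packages this differently, skipping the Helmholtz split and the symmetrizer $\sqrt{1+\kappa\varepsilon^2|\xi|^2}$ by carrying the three components $(\hae,\varepsilon i\xi\hae,\hve)$ in the functional $\widehat{V}^2=|(\hae,\hve)|^2+\kappa|\varepsilon i\xi\hae|^2+2\delta\Re\langle i\varepsilon\xi\hae,\hve\rangle_{\mathbb{C}^2}$ with the constant weight $\delta=\min\{1/2,\kappa/2\}$, which coincides with your weighted cross term up to constants.
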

\begin{proof}
Let us first establish the pointwise estimate in Fourier space. Applying the Fourier transform of \eqref{eq:NSK-lin-1}, we see that 
    \begin{align}\label{eq:lin-hat}
        \begin{dcases}
        \partial_t \hae + \dfrac{1}{\varepsilon} i\xi \cdot \hve = \hf,\\
        \partial_t \hve + \mu |\xi|^2 \hve + (\mu+\lambda)\xi (\xi \cdot \hve)
        +\kappa |\xi|^2 (\varepsilon i\xi  \hae)
        +\frac{1}{\varepsilon}i\xi \hae
        = 
        \hg,\\
        \hae(0,\xi)= \widehat{a_0}(\xi),\quad \hve(0,\xi) = \widehat{v_0}(\xi).
    \end{dcases}
    \end{align}
    Multiply the first equation of \eqref{eq:lin-hat} by $\hae$ and take $\mathbb{C}^2$-inner product of the second equation of \eqref{eq:lin-hat}with $\hve$.
    Then, summing up them and taking real part of it, we have
    \begin{align}\label{lin-ene-1-1}
        \frac{1}{2}
        \partial_t
        | (\hae,\hve) |^2
        +
        \underline{\mu}
        |\xi|^2
        | \hve |^2 
        +
        \kappa 
        |\xi|^2 
        \Re 
        \langle \varepsilon i\xi  \hae, \hve \rangle_{\mathbb{C}^2}
        \leq
        | (\hf, \hg) | 
        | (\hae,\hve) |,
    \end{align}
    where $\underline{\mu}:=\min \{ \mu,1 \}$ and we have used 
    \begin{align}
        &
        \langle \mu |\xi|^2 \hve + (\mu+\lambda)\xi (\xi \cdot \hve), \hve \rangle_{\mathbb{C}^2}
        \geq
        \underline{\mu}
        |\xi|^2
        | \hve |^2,\\
        &
        \Re(i\xi \cdot \hve \hae 
        + 
        \langle i\xi \hae , \hve\rangle_{\mathbb{C}^2})
        =
        0.
    \end{align}
    Multiplying the first equation of \eqref{eq:lin-hat} by $\varepsilon i\xi$ and taking the $\mathbb{C}^2$-inner product of it with $\varepsilon i \xi \hae$, and then taking the real part, we see that
    \begin{align}\label{lin-ene-1-2}
        \frac{1}{2}
        \partial_t
        |\varepsilon i \xi \hae|^2 
        -
        |\xi|^2
        \Re 
        \langle \varepsilon i\xi  \hae, \hve \rangle_{\mathbb{C}^2}
        =
        |\varepsilon i \xi \hf|
        |\varepsilon i \xi \hae|.
    \end{align}
    Next, multiplying the first equation of \eqref{eq:lin-hat} by $i\varepsilon\xi$ and then taking $\mathbb{C}^2$-inner product of it with $\hve$, we have
    \begin{align}\label{lin-ene-1-au}
        \langle i\varepsilon\xi \partial_t\hae , \hve \rangle_{\mathbb{C}^2}
        -
        |\xi \cdot \hve|^2
        =
        \langle i\varepsilon\xi \widehat{f}, \hve \rangle_{\mathbb{C}^2}.
    \end{align}
    By the inner product of the second equation of \eqref{eq:lin-hat} with $i\varepsilon \xi \hae$, it holds
    \begin{align}\label{lin-ene-1-ua}
        \langle i\varepsilon \xi \hae, \partial_t\hve \rangle_{\mathbb{C}^2}
        +
        |\xi|^2
        \langle i\varepsilon \xi \hae, \hve \rangle_{\mathbb{C}^2}
        +
        \kappa |\xi|^2 |\varepsilon i \xi \hae |^2
        +
        |\xi|^2|\hae|^2
        =
        \langle i\varepsilon \xi \hae, \widehat{g} \rangle_{\mathbb{C}^2}.
    \end{align}
    Summing up \eqref{lin-ene-1-au} and \eqref{lin-ene-1-ua},
    and taking the real part of it,
    we have 
    \begin{align}\label{lin-ene-1-3}
        \begin{split}
        &
        \partial_t
        \Re 
        \langle i\varepsilon \xi \hae, \hve \rangle_{\mathbb{C}^2}
        +
        \kappa |\xi|^2 |\varepsilon i \xi \hae |^2
        +
        |\xi|^2|\hae|^2\\
        &\quad\leq{}
        |\varepsilon i \xi \widehat{f}|
        |\hve|
        +
        |\varepsilon i\xi \hae|
        |\hg|
        +
        |\xi|^2
        |\varepsilon i \xi \hae|
        |\hve|
        +
        |\xi|^2|\hve|^2.
        \end{split}
    \end{align}
    Here, we define
    \begin{align}
        \widehat{V}^2(t,\xi)
        :=
        |(\hae,\hve)|^2
        +
        \kappa
        |\varepsilon i \xi \hae |^2
        +
        2\delta
        \Re 
        \langle i\varepsilon \xi \hae, \hve \rangle_{\mathbb{C}^2},
    \end{align}
    where we have set 
    \begin{align}
        \delta:=\min\Mp{
        \frac{1}{2},
        \frac{\kappa}{2}
        }
    \end{align}
    As there holds
    \begin{align}
        \left|
        \widehat{V}^2(t,\xi)
        -
        \sp{|(\hae,\hve)|^2
        +
        \kappa
        |\varepsilon i \xi \hae |^2}
        \right|
        \leq{}&
        2\delta |i\varepsilon \xi \hae| |\hve|\\
        \leq{}&
        \delta |i\varepsilon \xi \hae|^2 
        +
        \delta |\hve|^2\\
        \leq{}&
        \frac{1}{2}
        \sp{|(\hae,\hve)|^2
        +
        \kappa
        |\varepsilon i \xi \hae |^2},
    \end{align}
    we see that 
    \begin{align}\label{est-V}
        \frac{1}{2}
        \sp{|(\hae,\hve)|^2
        +
        \kappa
        |\varepsilon i \xi \hae |^2}
        \leq
        \widehat{V}^2(t,\xi)
        \leq
        \frac{3}{2}
        \sp{|(\hae,\hve)|^2
        +
        \kappa
        |\varepsilon i \xi \hae |^2}.
    \end{align}
    By \eqref{lin-ene-1-1}, \eqref{lin-ene-1-2} and \eqref{lin-ene-1-3}, we have
    \begin{align}
        &
        \frac{1}{2}
        \partial_t 
        \widehat{V}^2
        +
        \underline{\mu}
        |\xi|^2
        |\hve|^2
        +
        \delta
        |\xi|^2
        |\hae|^2
        +
        \delta
        \kappa
        |\xi|^2
        |\varepsilon i \xi \hae|^2\\
        &
        \quad
        \leq 
        (|(\hf,\hg)|+|\varepsilon i\xi \hf|)
        (|(\hae,\hve)|^2
        +
        \kappa
        |\varepsilon i \xi \hae |^2)
        +
        \delta
        |\xi|^2
        |\varepsilon i \xi \hae|
        |\hve|
        +
        \delta
        |\xi|^2|\hve|^2.
    \end{align}
    From this with \eqref{est-V}, we obtain 
    \begin{align}
        &
        \frac{1}{2}
        \partial_t 
        \widehat{V}^2
        +
        \frac{\delta}{2}
        |\xi|^2
        \widehat{V}^2
        \leq 
        C(|(\hf,\hg)|+|\varepsilon i\xi \hf|)
        \widehat{V},
    \end{align}
    which implies 
    \begin{align}\label{est-V-2}
        \widehat{V}(t,\xi)
        \leq{}&
        Ce^{-\frac{\delta}{2}|\xi|^2t}
        \widehat{V}(0,\xi)\\
        &
        +
        C
        \int_0^t
        e^{-\frac{\delta}{2}|\xi|^2(t-\tau)}
        (|(\hf,\hg)(\tau,\xi)|+|\varepsilon i\xi \hf(\tau,\xi)|)
        d\tau.
    \end{align}
    Combining \eqref{est-V} and \eqref{est-V-2}, we obtain 
    \begin{align}
        &
        |(\hae,\hve)(t,\xi)|
        +
        \kappa
        |\varepsilon i \xi \hae(t,\xi) |\\
        &\quad 
        \leq
        Ce^{-\frac{\delta}{2}|\xi|^2t}
        \sp{|(\widehat{a_0},\widehat{v_0})(\xi)|
        +
        \kappa
        |\varepsilon i \xi \widehat{a_0}(\xi) |
        }\\
        &\qquad
        +
        C
        \int_0^t
        e^{-\frac{\delta}{2}|\xi|^2(t-\tau)}
        (|(\hf,\hg)(\tau,\xi)|+|\varepsilon i \xi \hf(\tau,\xi)|)
        d\tau.
    \end{align}
    Multiplying this by $\widehat{\phi_j}(\xi)$ and taking $L^r(0,T;L^{p'}(\mathbb{R}^2_{\xi}))$-norm, we complete the proof. 
\end{proof}

Next, we focus on the asymptotic behavior of solutions to \eqref{eq:NSK-lin-1} as $\varepsilon \downarrow 0$.
\begin{lemm}\label{lemm:lin-0-1}
    Let $1 \leq p \leq \infty$.
    Let $(\ae,\ve)$ be the solution to \eqref{eq:NSK-lin-1} with the data $a_0 \in \fB_{p,1}^{\frac{2}{p}-1}(\mathbb{R}^2) \cap \fB_{p,1}^{\frac{2}{p}}(\mathbb{R}^2)$, $v_0 \in \fB_{p,1}^{\frac{2}{p}-1}(\mathbb{R}^2)$, $f \in L^1(0,\infty;\fB_{p,1}^{\frac{2}{p}-1}(\mathbb{R}^2) \cap \fB_{p,1}^{\frac{2}{p}}(\mathbb{R}^2))$ and $g \in L^1(0,\infty;\fB_{p,1}^{\frac{2}{p}-1}(\mathbb{R}^2))$.
    Then, it holds 
    \begin{align}
        \lim_{\varepsilon \downarrow 0} 
        \n{\varepsilon \nabla \ae }_{\widetilde{L^{\infty}}(0,\infty;\fB_{p,1}^{\frac{2}{p}-1})}
        =0.
    \end{align}
\end{lemm}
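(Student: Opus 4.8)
The plan is to exploit the extra factor $\varepsilon$ carried by $\varepsilon\nabla\ae$, which produces smallness at low frequencies, together with the smallness of the high-frequency tail of the data, which is effective at high frequencies. I emphasize at the outset that the uniform bound of Lemma \ref{lemm:lin-ene} cannot by itself yield the assertion: choosing $s=\frac{2}{p}-1$, $\sigma=1$, $r=\infty$ there controls $\n{\varepsilon\nabla\ae}_{\widetilde{L^{\infty}}(0,\infty;\fB_{p,1}^{\frac{2}{p}-1})}$ by the data, but the resulting bound
\begin{align}
\n{\varepsilon\nabla\ae}_{\widetilde{L^{\infty}}(0,\infty;\fB_{p,1}^{\frac{2}{p}-1})}
\leq
C\n{(a_0,v_0)}_{\fB_{p,1}^{\frac{2}{p}-1}}
+
C\n{(f,g)}_{\widetilde{L^1}(0,\infty;\fB_{p,1}^{\frac{2}{p}-1})}
+
C\varepsilon\sp{\n{a_0}_{\fB_{p,1}^{\frac{2}{p}}}+\n{f}_{\widetilde{L^1}(0,\infty;\fB_{p,1}^{\frac{2}{p}})}}
\end{align}
is merely $O(1)$, not $o(1)$, as $\varepsilon\downarrow 0$. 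Instead I would start from the pointwise frequency estimate derived in the proof of Lemma \ref{lemm:lin-ene},
\begin{align}
|(\hae,\hve)(t,\xi)|+\kappa|\varepsilon i\xi\hae(t,\xi)|
\leq{}&
Ce^{-\frac{\delta}{2}|\xi|^2t}\sp{|(\widehat{a_0},\widehat{v_0})(\xi)|+\kappa|\varepsilon i\xi\widehat{a_0}(\xi)|}\\
&+C\int_0^t e^{-\frac{\delta}{2}|\xi|^2(t-\tau)}\sp{|(\hf,\hg)(\tau,\xi)|+|\varepsilon i\xi\hf(\tau,\xi)|}\,d\tau,
\end{align}
split each dyadic sum at a threshold $2^{j}=R$ into a low part ($2^{j}<R$) and a high part ($2^{j}\geq R$), which, since $\sigma=1$, add up to the full norm, and then let $\varepsilon\downarrow 0$ for fixed $R$ before finally letting $R\to\infty$.

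For the low-frequency part I would simply use the prefactor $\varepsilon$ together with $|\varepsilon i\xi|\leq 2\varepsilon\,2^{j}$ on $\supp\widehat{\phi_j}$, which turns the extra derivative into a harmless factor bounded by $R$:
\begin{align}
\n{\varepsilon\nabla\ae}_{\widetilde{L^{\infty}}(0,\infty;\fB_{p,1}^{\frac{2}{p}-1})}^{\ell;R}
\leq
2\varepsilon\sum_{2^{j}<R}2^{\frac{2j}{p}}\n{\widehat{\phi_j}\hae}_{L^{\infty}(0,\infty;L^{p'})}
\leq
2R\varepsilon\n{\ae}_{\widetilde{L^{\infty}}(0,\infty;\fB_{p,1}^{\frac{2}{p}-1})}
\leq
2RM\varepsilon,
\end{align}
where $\n{\ae}_{\widetilde{L^{\infty}}(0,\infty;\fB_{p,1}^{\frac{2}{p}-1})}\leq M$ is the uniform (in $\varepsilon\leq1$) bound furnished by Lemma \ref{lemm:lin-ene} at $s=\frac{2}{p}-1$. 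For fixed $R$ this tends to $0$ as $\varepsilon\downarrow 0$.

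For the high-frequency part I would insert the same pointwise estimate, bound $e^{-\frac{\delta}{2}|\xi|^2t}\leq 1$, and apply Young's inequality in time to the Duhamel term, obtaining
\begin{align}
\n{\varepsilon\nabla\ae}_{\widetilde{L^{\infty}}(0,\infty;\fB_{p,1}^{\frac{2}{p}-1})}^{h;R}
\leq{}&
C\sp{\n{(a_0,v_0)}_{\fB_{p,1}^{\frac{2}{p}-1}}^{h;R}+\n{(f,g)}_{\widetilde{L^1}(0,\infty;\fB_{p,1}^{\frac{2}{p}-1})}^{h;R}}\\
&+C\varepsilon\sp{\n{a_0}_{\fB_{p,1}^{\frac{2}{p}}}+\n{f}_{\widetilde{L^1}(0,\infty;\fB_{p,1}^{\frac{2}{p}})}},
\end{align}
where the $\varepsilon$-weighted density terms were controlled through $\kappa\n{\nabla a_0}_{\fB_{p,1}^{\frac{2}{p}-1}}^{h;R}\leq C\n{a_0}_{\fB_{p,1}^{\frac{2}{p}}}$ and the analogous estimate for $f$. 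Adding the two parts and taking $\limsup_{\varepsilon\downarrow 0}$ with $R$ fixed annihilates every $\varepsilon$-term, leaving
\begin{align}
\limsup_{\varepsilon\downarrow 0}\n{\varepsilon\nabla\ae}_{\widetilde{L^{\infty}}(0,\infty;\fB_{p,1}^{\frac{2}{p}-1})}
\leq
C\sp{\n{(a_0,v_0)}_{\fB_{p,1}^{\frac{2}{p}-1}}^{h;R}+\n{(f,g)}_{\widetilde{L^1}(0,\infty;\fB_{p,1}^{\frac{2}{p}-1})}^{h;R}}.
\end{align}
Since $(a_0,v_0)\in\fB_{p,1}^{\frac{2}{p}-1}$ and $(f,g)\in L^1(0,\infty;\fB_{p,1}^{\frac{2}{p}-1})$, the convergence of the defining $\ell^{1}$-series forces the high-frequency tails on the right to vanish as $R\to\infty$; as the left-hand side is independent of $R$, taking the infimum over $R$ gives the claim.

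The crux — and the only genuinely delicate point — is precisely this order of limits: one must fix the cut-off $R$ first to render the dissipative tail small, and only afterwards send $\varepsilon\downarrow 0$, since the low-frequency gain is of size $R\varepsilon$ while taming the high frequencies requires $R\to\infty$. Trying to use the $\varepsilon$-factor uniformly across all frequencies is exactly what fails, which is why the plain uniform estimate of Lemma \ref{lemm:lin-ene} is insufficient; the remaining manipulations are the routine bookkeeping of the truncated Fourier--Besov norms introduced in Section \ref{sec:pre}.
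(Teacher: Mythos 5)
Your proposal is correct and follows essentially the same route as the paper: split the frequencies at a data-dependent threshold, gain a factor $\varepsilon$ at low frequencies via Bernstein's inequality and the uniform bound of Lemma \ref{lemm:lin-ene}, and control the high frequencies by the vanishing $\ell^1$-tails of the data and forcing, sending $\varepsilon\downarrow 0$ before the cut-off tends to infinity. The only cosmetic difference is that the paper uses a three-region splitting (with an intermediate zone $L_{\delta}\leq 2^{j}<1/\varepsilon$ where $\varepsilon\nabla$ is absorbed into $\ae$ itself) and invokes the frequency-localized form of Lemma \ref{lemm:lin-ene} rather than the raw pointwise estimate, but this changes nothing of substance.
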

\begin{proof}
    Let $\delta$ be an arbitrary positive constant.
    Then, since
    \begin{align}
        &\lim_{L\to \infty}
        \sum_{2^j \geq L}
        2^{(\frac{2}{p}-1)j}
        \n{\Delta_j \sp{a_0,\nabla a_0,v_0}}_{\widehat{L^p}}
        =0,\\
        &
        \lim_{L\to \infty}
        \sum_{2^j \geq L}
        2^{(\frac{2}{p}-1)j}
        \n{\Delta_j \sp{f,\nabla f,g}}_{L^1(0,\infty;\widehat{L^p})}
        =0, 
    \end{align}
    there exists a constant $L_{\delta}=L_{\delta}(p, a_0,v_0,f,g )>1$ such that 
    \begin{align}
        \n{\sp{a_0,\nabla a_0,v_0}}_{\fB_{p,1}^{\frac{2}{p}-1}}^{h;L_{\delta}} \leq \delta,\quad
        \n{\sp{f,\nabla f,g}}_{L^1(0,\infty;\fB_{p,1}^{\frac{2}{p}-1})}^{h;L_{\delta}} \leq \delta.
    \end{align}
    Let $0<\varepsilon<1/L_{\delta}$.
    Then, we infer from Lemma \ref{lemm:lin-ene} and the Bernstein inequality that 
    \begin{align}
        &\n{\varepsilon \nabla \ae }_{\widetilde{L^{\infty}}(0,\infty;\fB_{p,1}^{\frac{2}{p}-1})}\\
        &\quad={}
        \n{\varepsilon \nabla \ae }_{\widetilde{L^{\infty}}(0,\infty;\fB_{p,1}^{\frac{2}{p}-1})}^{  \ell;L_{\delta}  }
        +
        \n{\varepsilon \nabla \ae }_{\widetilde{L^{\infty}}(0,\infty;\fB_{p,1}^{\frac{2}{p}-1})}^{m;L_{\delta},\frac{1}{\varepsilon}}
        +
        \n{\varepsilon \nabla \ae }_{\widetilde{L^{\infty}}(0,\infty;\fB_{p,1}^{\frac{2}{p}-1})}^{h;\frac{1}{\varepsilon}}\\
        &\quad
        \leq{}
        \varepsilon L_{\delta}
        \n{ \ae }_{\widetilde{L^{\infty}}(0,\infty;\fB_{p,1}^{\frac{2}{p}-1})}^{  
  \ell; L_{\delta}   } 
        +
        \n{ \ae }_{\widetilde{L^{\infty}}(0,\infty;\fB_{p,1}^{\frac{2}{p}-1})}^{m;L_{\delta},\frac{1}{\varepsilon}}
        +
        \n{\varepsilon \nabla \ae }_{\widetilde{L^{\infty}}(0,\infty;\fB_{p,1}^{\frac{2}{p}-1})}^{h;\frac{1}{\varepsilon}}\\
        &\quad
        \leq{}
        \varepsilon L_{\delta}
        \n{ \sp{\ae,\varepsilon\nabla\ae,\ve} }_{\widetilde{L^{\infty}}(0,\infty;\fB_{p,1}^{\frac{2}{p}-1})}
        +
        \n{ \sp{\ae,\varepsilon\nabla\ae,\ve} }_{\widetilde{L^{\infty}}(0,\infty;\fB_{p,1}^{\frac{2}{p}-1})}^{h;L_{\delta}}\\
        &\quad
        \leq{}
        C\varepsilon L_{\delta}
        \sp{
        \n{(a_0,\varepsilon \nabla a_0, v_0)}_{\fB_{p,1}^{{\frac{2}{p}-1}}}
        +
        \n{(f,\varepsilon \nabla f, g)}_{\widetilde{L^1}(0,T;\dB_{p,1}^{\frac{2}{p}-1})}}\\
        &\qquad
        +
        C
        \sp{
        \n{(a_0,\varepsilon \nabla a_0, v_0)}_{\fB_{p,1}^{{\frac{2}{p}-1}}}^{h;L_{\delta}}
        +
        \n{(f,\varepsilon \nabla f, g)}_{\widetilde{L^1}(0,T;\dB_{p,1}^{\frac{2}{p}-1})}^{h;L_{\delta}}}\\
        &\quad
        \leq{}
        C\varepsilon L_{\delta}
        \sp{
        \n{(a_0,\varepsilon \nabla a_0, v_0)}_{\fB_{p,1}^{{\frac{2}{p}-1}}}
        +
        \n{(f,\varepsilon \nabla f, g)}_{\widetilde{L^1}(0,T;\dB_{p,1}^{\frac{2}{p}-1})}}
        +
        C\delta.
    \end{align}
    Thus, we have
    \begin{align}
        \limsup_{\varepsilon \downarrow 0}
        \n{\varepsilon \nabla \ae }_{\widetilde{L^{\infty}}(0,\infty;\fB_{p,1}^{\frac{2}{p}-1})}
        \leq
        C\delta.
    \end{align}
    Since $\delta>0$ is arbitrary, we complete the proof.
\end{proof}
\begin{lemm}\label{lemm:lin-0-2}
    Let $2\leq p <q \leq \infty$ and $1<r<\infty$.
    Let $(\ae,\ve)$ be the solution to \eqref{eq:NSK-lin-1} with the data $a_0 \in \fB_{p,1}^{\frac{2}{p}-1}(\mathbb{R}^2) \cap \fB_{p,1}^{\frac{2}{p}}(\mathbb{R}^2)$, $v_0 \in \fB_{p,1}^{\frac{2}{p}-1}(\mathbb{R}^2)$, $f \in L^1(0,\infty;\fB_{p,1}^{\frac{2}{p}-1}(\mathbb{R}^2) \cap \fB_{p,1}^{\frac{2}{p}}(\mathbb{R}^2))$ and $g \in L^1(0,\infty;\fB_{p,1}^{\frac{2}{p}-1}(\mathbb{R}^2))$.
    Then, it holds
    \begin{align}
        \lim_{\varepsilon \downarrow0}
        \n{\sp{\ae,\varepsilon\nabla\ae,\mathbb{Q}\ve}}_{\widetilde{L^r}(0,\infty;\dB_{q,1}^{\frac{2}{q}-1+\frac{2}{r}})}
        =0.
    \end{align}
\end{lemm}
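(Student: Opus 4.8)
The plan is to exploit the fact that, for the acoustic component, the fast oscillation of frequency $\sim 1/\varepsilon$ produces genuine \emph{dispersive} decay in the physical-space norm $\dB_{q,1}^{\bullet}$ with $q>p\geq 2$; this is exactly why the Fourier--Besov energy bound of Lemma \ref{lemm:lin-ene} can here be upgraded to the strictly larger integrability exponent $q$ together with a time weight $\widetilde{L^r}$. First I would reduce to the acoustic system: applying $\mathbb{Q}$ to the velocity equation in \eqref{eq:NSK-lin-1} and using $\mathbb{Q}\mathcal{L}\ve=\Delta\mathbb{Q}\ve$, the pair $(\ae,\mathbb{Q}\ve)$ solves a closed linear system driven by $(f,\mathbb{Q}g)$. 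Diagonalising as in the proof of Lemma \ref{lemm:lin-ene}, on each frequency $\xi$ the symbol has eigenvalues $-\tfrac12|\xi|^2\pm i\omega_{\varepsilon}(\xi)$, with $\omega_{\varepsilon}(\xi)\sim|\xi|/\varepsilon$, wherever the discriminant is negative. The structural point is that the damping $-\tfrac12|\xi|^2$ is independent of $\varepsilon$, while the oscillation frequency diverges as $\varepsilon\downarrow0$; at very high frequencies the eigenvalues become real (overdamped) and there is no oscillation, so no $\varepsilon$-smallness can be expected there.

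Following the frequency-truncation scheme of Lemma \ref{lemm:lin-0-1}, I would fix $\delta>0$ and choose a threshold $L_{\delta}>1$, depending only on $\delta$ and the data, so that the high-frequency tails of $(a_0,\nabla a_0,v_0)$ and of $(f,\nabla f,g)$ in $\fB_{p,1}^{\frac2p-1}$ are at most $\delta$. For the high frequencies $2^{j}\geq L_{\delta}$ I would invoke the maximal regularity of Lemma \ref{lemm:lin-ene} with exponent $r$ (and $r_1=1$, since the forces lie in $L^1_t$) followed by the Hausdorff--Young embedding $\fB_{p,1}^{\frac2p-1+\frac2r}\hookrightarrow\dB_{q,1}^{\frac2q-1+\frac2r}$, which is valid because $q\geq p\geq2$. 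Since the estimate of Lemma \ref{lemm:lin-ene} is a pointwise-in-$\xi$ upper bound, its restriction to $2^{j}\geq L_{\delta}$ is immediate, and it controls the high-frequency part of the target norm by $C\delta$, uniformly in $\varepsilon$.

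For the remaining bounded frequencies $2^{j}<L_{\delta}$, where for $\varepsilon$ small the eigenvalues are genuinely oscillatory, I would use the explicit propagator $e^{-t|\xi|^2/2}e^{\pm it\omega_{\varepsilon}(\xi)}$ rather than the energy inequality, so as to retain the phase. Here the dispersion enters: after the time rescaling $t=\varepsilon s$, the fast half-wave $e^{\pm it\omega_{\varepsilon}(D)}$ becomes, up to the $O(\varepsilon^{2})$ Korteweg correction, the unit-speed $2$D half-wave $e^{\pm is|D|}$, whose frequency-localised dispersive estimate gives $L^{q}_{x}$-decay for $q>2$; undoing the rescaling contributes a factor $\varepsilon^{1/r}$ from $dt=\varepsilon\,ds$ in the $L^{r}_{t}$ norm, and the Duhamel term is handled by the dual-admissible inhomogeneous estimate since the forces are in $L^1_t$. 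Because all frequencies here are bounded by $L_{\delta}$, Bernstein's inequality renders the precise target regularity irrelevant up to a constant $C(L_{\delta})$, so any loss of derivatives in the dispersive estimate is harmless. This bounds the low/middle part by $\varepsilon^{1/r}C(L_{\delta})\,\n{(a_0,\varepsilon\nabla a_0,v_0)}_{\fB_{p,1}^{\frac2p-1}}$ plus the analogous force contribution, which tends to $0$ as $\varepsilon\downarrow0$ at fixed $\delta$. Combining the two regimes gives $\limsup_{\varepsilon\downarrow0}\n{(\ae,\varepsilon\nabla\ae,\mathbb{Q}\ve)}_{\widetilde{L^r}(0,\infty;\dB_{q,1}^{\frac2q-1+\frac2r})}\leq C\delta$, and letting $\delta\downarrow0$ concludes.

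The main obstacle is the genuinely dispersive step: establishing the frequency-localised Strichartz/dispersive estimate for the $\varepsilon$-scaled, Korteweg-perturbed acoustic propagator and extracting a positive power of $\varepsilon$. This requires a stationary-phase analysis of the modified dispersion relation $\omega_{\varepsilon}$ (checking non-degeneracy of the phase Hessian on the relevant annulus, where the leading term $|\xi|/\varepsilon$ dominates the $\kappa|\xi|^{3}$ correction), a $TT^{*}$ argument for the homogeneous part, and the $L^1_t$-to-$L^r_t$ inhomogeneous estimate for the forcing. A delicate point, which must be watched, is the uniform control of the dispersive constants down to the low frequencies, where both the oscillation rate $\omega_{\varepsilon}$ and the damping $|\xi|^2$ degenerate; the parabolic factor $e^{-t|\xi|^2/2}$ only improves integrability and causes no difficulty elsewhere.
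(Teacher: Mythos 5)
Your overall architecture coincides with the paper's: truncate at a data-dependent threshold $L_{\delta}$, control the frequencies $2^{j}\geq L_{\delta}$ by the $\varepsilon$-uniform maximal regularity estimate of Lemma \ref{lemm:lin-ene} (giving $C\delta$), and gain a factor $\varepsilon^{1/r}$ on the bounded frequencies from a Strichartz estimate for the $\varepsilon$-scaled acoustic propagator. Two remarks, one of which is a real gap.

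The gap is in the assertion that the time rescaling $t=\varepsilon s$ "contributes a factor $\varepsilon^{1/r}$" for \emph{every} $1<r<\infty$. The frequency-localised Strichartz estimate you need, $\n{e^{it|\nabla|/\varepsilon}\Delta_j f}_{L^r(\mathbb{R};L^q)}\lesssim \varepsilon^{1/r}2^{(\frac{2}{p}-\frac{2}{q}-\frac{1}{r})j}\n{\Delta_j f}_{\widehat{L^p}}$, is obtained (as in Lemma \ref{lemm:str-hat-Besov-1}) by interpolating the admissible $2$D wave estimate $L^2\to L^{r_1}L^q$ with the trivial $\widehat{L^q}\to L^\infty L^q$ bound, and this forces the admissibility constraint $\frac{1}{r}\leq\frac{1}{2}\bigl(\frac{1}{p}-\frac{1}{q}\bigr)$. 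For $r$ close to $1$ (e.g.\ $p=2$, $q=3$, $r=6/5$) this fails, and no rescaling argument can produce an $L^r_t$ dispersive bound outside the admissible range; the parabolic damping alone gives $L^r_t$ integrability but with no power of $\varepsilon$. The paper closes this range by a separate step: it first proves the limit for some admissible $r_1>r$, then interpolates in time between $\widetilde{L^{r_1}}$ (which tends to $0$) and the $\varepsilon$-uniform $L^1(0,\infty;\fB_{p,1}^{2/p+1})$ maximal regularity bound, using $\frac{1}{r}=\frac{\theta}{r_1}+(1-\theta)$. Your argument as written silently assumes the full range is admissible, so you must add this interpolation step.

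The second point is a difference of route rather than an error. You propose to absorb the Korteweg correction into the dispersion relation and run stationary phase on $\omega_{\varepsilon}(\xi)\sim\frac{|\xi|}{\varepsilon}\sqrt{1+\kappa\varepsilon^{2}|\xi|^{2}}$, and you correctly flag this as the main obstacle. The paper avoids it entirely: writing $\widetilde{v_\varepsilon}=|\nabla|^{-1}\div\mathbb{Q}\ve$, it keeps the \emph{pure} half-wave propagator $e^{\pm it|\nabla|/\varepsilon}$ and moves both the viscous term $\Delta\widetilde{v_\varepsilon}$ and the Korteweg term $|\nabla|^{-1}\div\mathbb{Q}(\kappa\Delta(\varepsilon\nabla\ae))$ into the Duhamel source, which is then controlled in $\widetilde{L^1}(0,T;\fB_{p,\sigma}^{\frac{2}{p}+2+s})$ by Lemma \ref{lemm:lin-ene}. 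This costs nothing (the source is integrable in time by maximal regularity) and spares you any analysis of the modified phase; I would recommend adopting it.
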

In order to prove Lemma \ref{lemm:lin-0-2}, we prepare the following lemma about the Strichartz estimates for the solutions to \eqref{eq:NSK-lin-1}.
\begin{lemm}\label{lemm:str-hat-Besov-1}
    Let $p$, $q$, and $r$ satisfy 
    \begin{align}
        2 \leq p \leq q \leq \infty, \qquad 
        0 
        \leq 
        \frac{1}{r}
        \leq 
        \frac{1}{2}
        \sp{
        \frac{1}{p} - \frac{1}{q}
        }.
    \end{align}
    Then, there exists a positive constant $C=C(p,q,r)$ such that
    the solution $(\ae,\ve)$ to \eqref{eq:NSK-lin-1} satisfies 
    \begin{align}
        \n{(\ae,\mathbb{Q}\ve)}_{\widetilde{L^r}(0,T;\dB_{q,\sigma}^{\frac{2}{q}+\frac{1}{r}+s})}
        \leq{}&
        C
        \varepsilon^{\frac{1}{r}}
        \n{(a_0, \varepsilon \nabla a_0 ,\mathbb{Q}v_0)}_{\fB_{p,\sigma}^{\frac{2}{p}+s}}\\
        &+
        C
        \varepsilon^{\frac{1}{r}}
        \n{(f,\varepsilon \nabla f,\mathbb{Q}g)}_{\widetilde{L^1}(0,T;\fB_{p,\sigma}^{\frac{2}{p}+s})}
    \end{align}
    for all $s\in \mathbb{R}$, $\varepsilon > 0$, $1 \leq \sigma \leq \infty$, $0 < T \leq \infty$.
\end{lemm}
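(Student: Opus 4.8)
The plan is to project onto the compressible part of the flow, where $(\ae,\mathbb{Q}\ve)$ solves a damped acoustic system whose wave speed is $1/\varepsilon$, and then to treat low and high frequencies by genuinely different mechanisms: dispersion for the former and the Korteweg parabolic smoothing for the latter.

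First I would reduce \eqref{eq:NSK-lin-1} to a $2\times2$ acoustic system. Applying $\mathbb{Q}$ and using $\nu=1$ (so that $\mathbb{Q}\mathcal{L}\ve=\Delta\mathbb{Q}\ve$), writing $\Lambda:=(-\Delta)^{1/2}$ and introducing the compressible unknown $d:=\Lambda^{-1}\div\ve$ (so that $\Lambda d=\div\mathbb{Q}\ve$, $\mathbb{Q}\nabla\ae=\nabla\ae$, and blockwise $\n{\mathbb{Q}\ve}_{\dB^{m}_{q,\sigma}}\simeq\n{d}_{\dB^{m}_{q,\sigma}}$ for every $m$), the pair $(\ae,d)$ satisfies, on the Fourier side with $\rho=|\xi|$, the system $\partial_t\hae=-\tfrac{\rho}{\varepsilon}\hat d+\hf$ and $\partial_t\hat d=-\rho^2\hat d+\big(\tfrac{\rho}{\varepsilon}+\kappa\varepsilon\rho^3\big)\hae+\widehat{\Lambda^{-1}\div\mathbb{Q}g}$. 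Its characteristic roots are $\mu_{\pm}(\rho)=-\tfrac{\rho^2}{2}\pm\sqrt{\tfrac{\rho^4}{4}-\tfrac{\rho^2}{\varepsilon^2}-\kappa\rho^4}$, so in the oscillatory regime $\mu_{\pm}=-\tfrac{\rho^2}{2}\pm i\omega_{\varepsilon}(\rho)$ with $\omega_{\varepsilon}(\rho)^2=\tfrac{\rho^2}{\varepsilon^2}+(\kappa-\tfrac14)\rho^4$. I would fix a small $c_0$ and split the Littlewood--Paley blocks into the acoustic range $2^{j}\le c_0/\varepsilon$, where the roots are complex and uniformly separated so the spectral projectors are bounded symbols, and the dissipative range $2^{j}>c_0/\varepsilon$, where one checks $\Re\mu_{\pm}\le-c_1 2^{2j}$. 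Crucially, it is the Korteweg term that forces this uniform parabolic decay $e^{-c_1 2^{2j}t}$ on the whole high--frequency range; for the plain compressible system ($\kappa=0$) the slow acoustic mode decays only like $e^{-t/\varepsilon^2}$, which is precisely the obstruction noted in the introduction.

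On the dissipative range the argument is elementary. The bound $\Re\mu_{\pm}\le-c_1 2^{2j}$ (equivalently Lemma \ref{lemm:lin-ene}) gives $\n{\Delta_j(\ae,d)(t)}_{\widehat{L^p}}\lesssim e^{-c_1 2^{2j}t}\n{\Delta_j(a_0,\varepsilon\nabla a_0,\mathbb{Q}v_0)}_{\widehat{L^p}}$ for the homogeneous part; the Hausdorff--Young embedding together with the Bernstein inequality converts $\widehat{L^p}$ into $L^q$ at the cost of $2^{2j(1/p-1/q)}$, and taking the $L^r(0,T)$--norm in time produces the surplus $2^{-2j/r}$. Since $2^{j}>c_0/\varepsilon$ we have $2^{-j/r}\lesssim\varepsilon^{1/r}$, which is exactly the claimed gain on this range; the forcing is absorbed through Duhamel's formula and Lemma \ref{lemm:lin-ene}.

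The heart of the matter is the acoustic range, and the main obstacle is the dispersive estimate there. Here $e^{\mu_{\pm}t}=e^{-\rho^2 t/2}e^{\pm i\omega_{\varepsilon}(\rho)t}$ with $\omega_{\varepsilon}(\rho)\approx\rho/\varepsilon$, so the propagator is a half--wave of speed $1/\varepsilon$ damped by a harmless factor $\le1$. I would prove the fixed--time bound $\n{\Delta_j e^{\pm it\omega_{\varepsilon}(\Lambda)}u}_{L^{\infty}}\lesssim\varepsilon^{1/2}2^{3j/2}|t|^{-1/2}\n{\Delta_j u}_{L^1}$ by stationary phase applied to the oscillatory integral in $\xi$; the rate $|t|^{-1/2}$ and the power $\varepsilon^{1/2}$ reflect the $2$D wave geometry, only the angular curvature of the radial phase $\omega_{\varepsilon}$ being nondegenerate. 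Interpolating with the unitarity $\n{\Delta_j e^{\pm it\omega_{\varepsilon}(\Lambda)}u}_{L^2}=\n{\Delta_j u}_{L^2}$ and running the non--endpoint $TT^{*}$ argument with the Hardy--Littlewood--Sobolev inequality yields, for $(r,q)$ in the stated wave--admissible range, $\n{\Delta_j(\ae,\mathbb{Q}\ve)}_{L^r(0,T;L^q)}\lesssim\varepsilon^{1/r}2^{j(1-2/q-1/r)}\n{\Delta_j(\text{data})}_{L^2}$; a final H\"older estimate on the frequency annulus replaces $\n{\cdot}_{L^2}$ by $2^{2j(1/p-1/2)}\n{\cdot}_{\widehat{L^p}}$, producing the weight $2^{j(2/p-2/q-1/r)}$ that matches the target regularity $\tfrac{2}{q}+\tfrac1r+s$ against the source regularity $\tfrac2p+s$. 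Summing in $\ell^{\sigma}$ over both ranges assembles the Chemin--Lerner norms, and the inhomogeneous terms are handled by Duhamel together with the Christ--Kiselev lemma (legitimate since $r$ is away from the endpoint). The delicate points I expect to fight are establishing the stationary--phase bound uniformly in $\varepsilon$ as $\omega_{\varepsilon}$ deforms from the pure acoustic profile towards the capillary profile $\sqrt{\kappa}\,\rho^2$ near $2^{j}\sim1/\varepsilon$, and controlling the spectral projectors away from the double--root frequency $\rho_{*}\sim1/\varepsilon$, which is the reason that frequency band is deliberately placed in the strongly damped bucket.
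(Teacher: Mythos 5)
Your overall architecture (split at $2^j\sim 1/\varepsilon$, parabolic decay above, dispersion below) is reasonable, and your high-frequency bookkeeping is correct. But there is a genuine gap at the crux of the low-frequency argument: the step ``a final H\"older estimate on the frequency annulus replaces $\n{\cdot}_{L^2}$ by $2^{2j(1/p-1/2)}\n{\cdot}_{\widehat{L^p}}$'' is false for $p>2$. By Plancherel this amounts to $\n{\hg}_{L^2(A_j)}\leq C2^{2j(\frac1p-\frac12)}\n{\hg}_{L^{p'}(A_j)}$ on the annulus $A_j$; since $p\geq2$ forces $p'\leq2$, H\"older runs in the \emph{opposite} direction, and the inequality fails (take $\hg=\mathbf{1}_B$ for a ball $B\subset A_j$ of radius $\delta\to0$: with $p=4$ the claim reads $\delta\lesssim 2^{-j}\delta^{3/2}$). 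This is not a repairable technicality within your scheme as written: the whole point of the lemma is that the data are only in $\fB_{p,\sigma}^{2/p+s}$, and a frequency-localized $\widehat{L^p}$ bound does not dominate the $L^2$ bound needed to feed the $TT^*$ machinery. Relatedly, if your conversion were valid you would obtain the estimate on the full $L^2$-wave-admissible range $\frac1r\leq\frac12\sp{\frac12-\frac1q}$, strictly larger than the stated $\frac1r\leq\frac12\sp{\frac1p-\frac1q}$ for $p>2$; the stated restriction exists precisely because of this loss.

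The paper closes exactly this gap by \emph{complex interpolation} rather than H\"older: it combines the standard $L^{r_1}(\mathbb{R};L^q)$ Strichartz estimate for $e^{it|\nabla|}\Delta_0$ with $L^2$ data and the trivial bound $\n{e^{it|\nabla|}\Delta_0 f}_{L^\infty(\mathbb{R};L^q)}\leq\n{f}_{\widehat{L^q}}$ (Hausdorff--Young), with $\frac1p=\frac\theta2+\frac{1-\theta}{q}$ and $\frac1r=\frac{\theta}{r_1}$, which produces $\n{e^{it|\nabla|}\Delta_0 f}_{L^r(\mathbb{R};L^q)}\leq C\n{f}_{\widehat{L^p}}$ directly and explains the admissibility window. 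A rescaling of each dyadic block then yields the $\varepsilon^{1/r}$ gain. The paper also sidesteps your other heavy step (uniform stationary phase for the deformed symbol $\omega_\varepsilon(\rho)=\sqrt{\rho^2/\varepsilon^2+(\kappa-\tfrac14)\rho^4}$ and control of the spectral projectors near the root crossing): it writes the compressible part as the \emph{pure} wave group $U(t/\varepsilon)$ acting on $(\ae,|\nabla|^{-1}\div\mathbb{Q}\ve)$ with the viscous and Korteweg terms moved to the right-hand side, and absorbs those source terms through the $\widetilde{L^1}$-in-time maximal regularity of Lemma \ref{lemm:lin-ene}. If you adopt the interpolation step, your diagonalization route could in principle be completed, but it buys nothing over the paper's softer argument and costs a delicate uniform-in-$\varepsilon$ oscillatory integral analysis.
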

\begin{proof}
    We first show the following Strichartz estimates for the wave equations with initial data in the Fourier--Besov spaces: 
    \begin{align}\label{st-f}
        \n{e^{i\frac{t}{\varepsilon}|\nabla|}f}_{\widetilde{L^r}(\mathbb{R};\dB_{q,\sigma}^{\frac{2}{q}+\frac{1}{r}+s})}
        \leq
        C
        \varepsilon^{\frac{1}{r}}
        \n{f}_{\fB_{p,\sigma}^{\frac{2}{p}+s}}
    \end{align}
    for all $f \in \fB_{p,\sigma}^{\frac{2}{p}+s}(\mathbb{R}^2)$.
    Let $0\leq \theta \leq 1$ and $r_1 $ satisfy  
    \begin{align}
        \frac{1}{p} = \frac{\theta}{2} + \frac{1-\theta}{q},\qquad
        \frac{1}{r} = \frac{\theta}{r_1} + \frac{1-\theta}{\infty}.
    \end{align}
    Then, we see that
    \begin{align}
        2 \leq q \leq \infty,\qquad
        0 
        \leq
        \frac{1}{r_1}
        \leq 
        \frac{1}{2}
        \sp{
        \frac{1}{2} - \frac{1}{q}
        }.
    \end{align}
    Thus, by the standard Strichartz estimate for the $2$D wave propagator, there holds 
    \begin{align}
        \n{e^{it|\nabla|}\Delta_0 f}_{L^{r_1}(\mathbb{R};L^{q})}
        \leq
        C
        \n{f}_{L^2}
    \end{align}
    for all $f \in L^2(\mathbb{R}^2)$; see \cite{Bah-Che-Dan-11}*{Theorem 8.18} for the proof.
    On the other hand, we see that 
    \begin{align}
        \n{e^{it|\nabla|}\Delta_0 f}_{L^{\infty}(\mathbb{R};L^{q})}
        \leq
        \n{e^{it|\xi|}\widehat{\phi_0}(\xi) \widehat{f}(\xi)}_{L^{\infty}(\mathbb{R};L^{q'})}
        \leq
        \n{\widehat{f}(\xi)}_{L^{q'}}
        =
        \n{f}_{\widehat{L^{q}}}
    \end{align}
    for all $f \in \widehat{L^{q}}(\mathbb{R}^2)$.
    Hence, from the complex interpolation, 
    we obtain 
    \begin{align}
        \n{e^{it|\nabla|}\Delta_0 f}_{L^r(\mathbb{R};L^q)}
        \leq
        C
        \n{f}_{\widehat{L^p}}
    \end{align}
    for all $f \in \widehat{L^p}(\mathbb{R}^2)$.
    Let $\widetilde{\Delta_j}:=\Delta_{j-1} + \Delta_j +\Delta_{j+1}$.
    Then, we see that 
    \begin{align}
        e^{i\frac{t}{\varepsilon}|\nabla|}\Delta_j f(x)
        ={}&
        e^{i\frac{t}{\varepsilon}|\nabla|}\Delta_j \sp{\widetilde{\Delta_j}f}(x)\\
        ={}&
        e^{i\frac{2^jt}{\varepsilon}|\nabla|}\Delta_0
        \lp{\sp{\widetilde{\Delta_j}f}\sp{2^{-j}\cdot}}(2^jx).
    \end{align}
    Thus, we have 
    \begin{align}
        \n{e^{i\frac{t}{\varepsilon}|\nabla|}\Delta_j f}_{L^r(\mathbb{R};L^q)}
        ={}&
        \varepsilon^{\frac{1}{r}}
        2^{-(\frac{2}{q}+\frac{1}{r})j}
        \n{e^{it|\nabla|}\Delta_0 \lp{\sp{\widetilde{\Delta_j}f}\sp{2^{-j}\cdot}}}_{L^r(\mathbb{R};L^q)}\\
        \leq{}&
        C\varepsilon^{\frac{1}{r}}
        2^{-(\frac{2}{q}+\frac{1}{r})j}
        \n{{\widetilde{\Delta_j}f}\sp{2^{-j}\cdot}}_{\widehat{L^p}}\\
        ={}&
        C\varepsilon^{\frac{1}{r}}
        2^{(\frac{2}{p}-\frac{2}{q}-\frac{1}{r})j}
        \n{{\widetilde{\Delta_j}f}}_{\widehat{L^p}}\\
        ={}&
        C\varepsilon^{\frac{1}{r}}
        2^{(\frac{2}{p}-\frac{2}{q}-\frac{1}{r})j}
        \n{{\widetilde{\Delta_j}(\Delta_jf)}}_{\widehat{L^p}}\\
        \leq{}&
        C\varepsilon^{\frac{1}{r}}
        2^{(\frac{2}{p}-\frac{2}{q}-\frac{1}{r})j}
        \n{{{\Delta_j}f}}_{\widehat{L^p}},
    \end{align}
    which completes the proof of \eqref{st-f} via multiplying both sides by $2^{ (\f{2}{q}+\f{1}{r}+s)j  }$, followed by taking $\ell^{\sigma}(\mathbb{Z})$ norm over $j\in \mathbb{Z}$.

Let $\widetilde{v_\varepsilon}:= |\nabla|^{-1}\div \mathbb{Q} \ve$.
Then, direct calculation shows that $(\ae,\vel)$ satisfies 
\begin{align}
    \begin{dcases}
    \partial_t \ae + \frac{1}{\varepsilon}|\nabla| \widetilde{v_\varepsilon} = f,\\
    \partial_t \widetilde{v_\varepsilon} - \frac{1}{\varepsilon}|\nabla| \ae = \Delta \widetilde{v_\varepsilon} + |\nabla|^{-1}\div \mathbb{Q}(g+\kappa \Delta (\varepsilon \nabla \ae)),\\
    \ae(0,x)=a_0(x),\quad \widetilde{v_\varepsilon}(0,x)=\widetilde{v_0}(x):=|\nabla|^{-1}\div \mathbb{Q}v_0(x).
    \end{dcases}
\end{align}
Thus, we see that
\begin{align}
    \begin{pmatrix}
    \ae(t)\\
    \widetilde{v_\varepsilon}(t)
    \end{pmatrix}
    ={}&
    U\left(\frac{t}{\varepsilon}\right)
    \begin{pmatrix}
    a_0\\
    \widetilde{v_0}
    \end{pmatrix}\\
    &
    +
    \int_{0}^t
    U\left(\frac{t-\tau}{\varepsilon}\right)
    \begin{pmatrix}
    f\\
    \Delta \widetilde{v_\varepsilon} + |\nabla|^{-1}\div \mathbb{Q}(g+\kappa \Delta (\varepsilon \nabla \ae))
    \end{pmatrix}
    (\tau)d\tau,
\end{align}
where 
\begin{align}
    U\left( t \right)
    :={}&
    \begin{pmatrix}
    \cos |\nabla|t & -\sin |\nabla| t \\
    \sin |\nabla|t & \cos|\nabla|t
    \end{pmatrix}. 
\end{align} 
Then, from the Strichartz estimate \eqref{st-f} we see
\begin{align}\label{wave-1}
    \| (\ae, \widetilde{\ve}) \|_{\widetilde{L^r}(0,T;\dB_{q,\sigma}^{\frac{2}{q}+\frac{1}{r}+s})}
    \leqslant{}&
    C\varepsilon^{\frac{1}{r}}
    \bigg(
    \| (a_0,\mathbb{Q}v_0)\|_{\fB_{p,\sigma}^{\frac{2}{p}+s}}
    +
    \| (f,\mathbb{Q}g) \|_{ \widetilde{L^1}(0,T;\fB_{p,\sigma}^{\frac{2}{p}+s})}
    \bigg)\\
    &
    +
    C\varepsilon^{\frac{1}{r}}
    \| \mathbb{Q}\ve \|_{\widetilde{L^1}(0,T;\fB_{p,\sigma}^{\frac{2}{p}+2+s})}
    +
    C\varepsilon^{\frac{1}{r}}
    \| \varepsilon \nabla \ae \|_{ \widetilde{L^1} (0,T;\fB_{p,\sigma}^{\frac{2}{p}+2+s})}.
\end{align}
From Lemma \ref{lemm:lin-ene}, we have 
\begin{align}\label{wave-2}
    &
    \| \mathbb{Q}{\ve} \|_{\widetilde{L^1}(0,T;\fB_{p,\sigma}^{\frac{2}{p}+2+s})}
    +
    \| \varepsilon \nabla \ae \|_{\widetilde{L^1}(0,T;\fB_{p,\sigma}^{\frac{2}{p}+2+s})}\\
    &\quad
    \leqslant
    C
    \| (a_0, \varepsilon \nabla a_0, \mathbb{Q}v_0) \|_{\fB_{p,\sigma}^{\frac{2}{p}+s}}
    +
    C
    \| (f, \varepsilon \nabla f, \mathbb{Q}g) \|_{\widetilde{L^1}(0,T;\fB_{p,\sigma}^{\frac{2}{p}+s})}.
\end{align}
Combining \eqref{wave-1}, \eqref{wave-2} and noticing that
\begin{align}
    \| \mathbb{Q}\ve \|_{\widetilde{L^r}(0,T;\dB_{q,\sigma}^{\frac{2}{q}+\frac{1}{r}+s})}
    =
    \| \nabla|\nabla|^{-1}\widetilde{\ve} \|_{\widetilde{L^r}(0,T;\dB_{q,\sigma}^{\frac{2}{q}+\frac{1}{r}+s})}
    \leqslant
    C 
    \| \widetilde{\ve} \|_{\widetilde{L^r}(0,T;\dB_{q,\sigma}^{\frac{2}{q}+\frac{1}{r}+s})},
\end{align}
we complete the proof.
\end{proof}
Using Lemma \ref{lemm:str-hat-Besov-1}, we prove Lemma \ref{lemm:lin-0-2}. 
\begin{proof}[Proof of Lemma \ref{lemm:lin-0-2}]
We first consider the case 
\begin{align}
        0 
        <
        \frac{1}{r}
        \leq 
        \frac{1}{2}
        \sp{
        \frac{1}{p} - \frac{1}{q}
        }.
    \end{align}
Let $\delta > 0$. 
Then, since
\begin{align}
    &\lim_{L\to \infty}
    \sum_{2^j \geq L}
    2^{(\frac{2}{p}-1)j}
    \n{\Delta_j \sp{a_0,\nabla a_0,v_0}}_{\widehat{L^p}}
    =0,\\
    &
    \lim_{L\to \infty}
    \sum_{2^j \geq L}
    2^{(\frac{2}{p}-1)j}
    \n{\Delta_j \sp{f,\nabla f,g}}_{L^1(0,\infty;\widehat{L^p})}
    =0,
\end{align}
there exists a constant $L_{\delta}=L_{\delta}(p,a_0,v_0,f,g)>1$ such that 
\begin{align}
    \n{\sp{a_0,\nabla a_0,v_0}}_{\fB_{p,1}^{\frac{2}{p}-1}}^{h;L_{\delta}} \leq \delta,\quad
    \n{\sp{f,\nabla f,g}}_{L^1(0,\infty;\fB_{p,1}^{\frac{2}{p}-1})}^{h;L_{\delta}} \leq \delta.
\end{align}
Then by Lemma \ref{lemm:lin-ene}, we have 
\begin{align}
    &\n{\sp{\ae,\varepsilon \nabla \ae,\ve}}_{\widetilde{L^{\infty}}(0,\infty;\fB_{p,1}^{\frac{2}{p}-1}) \cap L^1(0,\infty;\fB_{p,1}^{\frac{2}{p}+1})}^{h;L_{\delta}}\\
    &\quad
    \leqslant
    C
    \sp{\n{\sp{a_0,\nabla a_0,v_0}}_{\fB_{p,1}^{\frac{2}{p}-1}}^{h;L_{\delta}} 
    +
    \n{\sp{f,\nabla f,g}}_{L^1(0,\infty;\fB_{p,1}^{\frac{2}{p}-1})}^{h;L_{\delta}}}
    \\
    &\quad
    \leqslant
    C 
    \delta. 
\end{align}
Let $0<\varepsilon<1/L_{\delta}$.
We infer from Lemma \ref{lemm:str-hat-Besov-1}, Bernstein inequality and the continuous embedding $\fB_{p,1}^{\frac{2}{p}-1 +\f{2}{r} }(\R^2) \hookrightarrow \dB_{q,1}^{\frac{2}{q}-1+\frac{2}{r}}(\R^2)  $ that 
\begin{align}
    &
    \n{\sp{\ae,\varepsilon \nabla \ae,\mathbb{Q}\ve}}_{\widetilde{L^r}(0,\infty;\dB_{q,1}^{\frac{2}{q}-1+\frac{2}{r}})}\\
    &\quad
    \leqslant
    2
    \n{\sp{\ae,\mathbb{Q}\ve}}_{\widetilde{L^r}(0,\infty;\dB_{q,1}^{\frac{2}{q}-1+\frac{2}{r}})}^{\ell;\frac{1}{\varepsilon}}
    +
    \n{\sp{\ae,\varepsilon \nabla \ae,\ve}}_{\widetilde{L^r}(0,\infty;\dB_{q,1}^{\frac{2}{q}-1+\frac{2}{r}})}^{h;\frac{1}{\varepsilon}}\\
    &\quad
    \leqslant
    C
    \n{\sp{\ae,\mathbb{Q}\ve}}_{\widetilde{L^r}(0,\infty;\dB_{q,1}^{\frac{2}{q}-1+\frac{2}{r}})}^{\ell;L_{\delta}}\\
    &\qquad
    +
    C
    \n{\sp{\ae,\ve}}_{\widetilde{L^r}(0,\infty;\fB_{p,1}^{\frac{2}{p}-1+\frac{2}{r}})}^{m;L_{\delta},\frac{1}{\varepsilon}}
    +
    \n{\sp{\ae,\varepsilon \nabla \ae,\ve}}_{\widetilde{L^r}(0,\infty;\fB_{p,1}^{\frac{2}{p}-1+\frac{2}{r}})}^{h;\frac{1}{\varepsilon}}\\
    &\quad
    \leqslant
    C
    L_{\delta}^{\frac{1}{r}}
    \n{\sp{\ae,\mathbb{Q}\ve}}_{\widetilde{L^r}(0,\infty;\dB_{q,1}^{\frac{2}{q}-1+\frac{1}{r}})}^{\ell;L_{\delta}}
    +
    C
    \n{\sp{\ae,\varepsilon \nabla \ae,\ve}}_{\widetilde{L^{\infty}}(0,\infty;\fB_{p,1}^{\frac{2}{p}-1}) \cap L^1(0,\infty;\fB_{p,1}^{\frac{2}{p}+1})}^{h;L_{\delta} }\\
    &\quad
    \leqslant
    C
    L_{\delta}^{\frac{1}{r}}
    \varepsilon^{\frac{1}{r}}
    \sp{
    \n{\sp{a_0,\varepsilon \nabla a_0,\mathbb{Q}v_0}}_{\fB_{p,1}^{\frac{2}{p}-1}}
    +
    \n{\sp{f,\varepsilon\nabla f,g}}_{L^1(0,\infty;\fB_{p,1}^{\frac{2}{p}-1})}
    }+C\delta,
\end{align} 
which implies 
\begin{align}
    \limsup_{\varepsilon \downarrow 0}
    \n{\sp{\ae,\varepsilon \nabla \ae,\mathbb{Q}\ve}}_{\widetilde{L^r}(0,\infty;\dB_{q,1}^{\frac{2}{q}-1+\frac{2}{r}})}
    \leq
    C\delta.
\end{align}
Since $\delta>0$ is arbitrary, we have $\n{\sp{\ae,\varepsilon \nabla \ae,\mathbb{Q} \ve}}_{\widetilde{L^r}(0,\infty;\dB_{q,1}^{\frac{2}{q}-1+\frac{2}{r}})} \to 0$ as $\varepsilon \downarrow 0$.

Next, we consider the case 
\begin{align}
    \frac{1}{2}
    \sp{
    \frac{1}{p} - \frac{1}{q}
    }<\frac{1}{r}<1.
\end{align}
We choose a $r<r_1<\infty$ such that 
\begin{align}
    0<\frac{1}{r_1}\leq
    \frac{1}{2}
    \sp{
    \frac{1}{p} - \frac{1}{q}
    }
\end{align}
and let $0<\theta<1$ satisfy
\begin{align}
    \frac{1}{r}=\frac{\theta}{r_1} + \frac{1-\theta}{1}.
\end{align}
Then, by the interpolation inequality, Lemma \ref{lemm:lin-ene}, and the proved conclusion that $\n{\sp{\ae,\varepsilon \nabla \ae,\mathbb{Q}\ve}}_{\widetilde{L^{r_1}}(0,\infty;\dB_{q,1}^{\frac{2}{q}-1+\frac{2}{r_1}})} \to 0$ as $\varepsilon \downarrow 0$, we have
\begin{align}
    &
    \n{\sp{\ae,\varepsilon \nabla \ae,\mathbb{Q}\ve}}_{\widetilde{L^r}(0,\infty;\dB_{q,1}^{\frac{2}{q}-1+\frac{2}{r}})}\\
    &
    \quad
    \leq
    \n{\sp{\ae,\varepsilon \nabla \ae,\mathbb{Q}\ve}}_{\widetilde{L^{r_1}}(0,\infty;\dB_{q,1}^{\frac{2}{q}-1+\frac{2}{r_1}})}^{\theta}
    \n{\sp{\ae,\varepsilon \nabla \ae,\ve}}_{L^1(0,\infty;\dB_{q,1}^{\frac{2}{q}+1})}^{1-\theta}\\
    &
    \quad
    \leq
    \n{\sp{\ae,\varepsilon \nabla \ae,\mathbb{Q}\ve}}_{\widetilde{L^{r_1}}(0,\infty;\dB_{q,1}^{\frac{2}{q}-1+\frac{2}{r_1}})}^{\theta}
    \n{\sp{\ae,\varepsilon \nabla \ae,\ve}}_{L^1(0,\infty;\fB_{p,1}^{\frac{2}{p}+1})}^{1-\theta}\\
    &
    \quad
    \leq
    C
    \n{\sp{\ae,\varepsilon \nabla \ae,\mathbb{Q}\ve}}_{\widetilde{L^{r_1}}(0,\infty;\dB_{q,1}^{\frac{2}{q}-1+\frac{2}{r_1}})}^{\theta}\\
    &\qquad
    \times
    \sp{
    \n{\sp{a_0,\varepsilon \nabla a_0,v_0}}_{\fB_{p,1}^{\frac{2}{p}-1}}
    +
    \n{\sp{f,\varepsilon\nabla f,g}}_{L^1(0,\infty;\fB_{p,1}^{\frac{2}{p}-1})}
    }^{1-\theta}\\
    &\quad \to 0 \qquad {\rm as}\quad \varepsilon \downarrow 0.
\end{align}
This completes the proof.
\end{proof}

\section{Proof of Theorem \ref{thm}}\label{sec:pf}
The aim of this section is to prove our main Theorem \ref{thm}.
We first provide the following proposition for the $2$D incompressible Navier--Stokes equations
\begin{align}\label{eq:lim-2}
        \begin{cases}
            \partial_t w - \mu \Delta w + \mathbb{P} (w \cdot \nabla)w = 0, & t>0,x \in \mathbb{R}^2,\\
            \div w = 0, & t \geq 0, x \in \mathbb{R}^2,\\
            w(0,x) = w_0(x), & x \in \mathbb{R}^2
        \end{cases}
\end{align}
in {\it the critical Fourier--Besov spaces} $\fB_{p,1}^{\frac{2}{p}-1}(\mathbb{R}^2)$.
More precisely, we prove the following proposition.
\begin{prop}\label{prop:lim}
    Let $2\leq p \leq 4$.
    Then, for any divergence free vector field $w_0 \in \fB_{p,1}^{\frac{2}{p}-1}(\mathbb{R}^2)$, the equations \eqref{eq:lim-2} possess a global solution 
    \begin{align}
        w \in \widetilde{C}([0,\infty);\fB_{p,1}^{\frac{2}{p}-1}(\mathbb{R}^2)) \cap L^1(0,\infty;\fB_{p,1}^{\frac{2}{p}+1}(\mathbb{R}^2)).
    \end{align}
\end{prop}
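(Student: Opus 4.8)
The plan is to combine a standard fixed-point construction of a local solution with a global-in-time a priori bound that exploits the two-dimensional energy structure. First I would reformulate \eqref{eq:lim-2} in Duhamel form, using $\div w=0$ to write $\mathbb{P}(w\cdot\nabla)w=\mathbb{P}\div(w\otimes w)$, so that
\begin{align}
w(t)=e^{t\mu\Delta}w_0-\int_0^t e^{(t-\tau)\mu\Delta}\mathbb{P}\div(w\otimes w)(\tau)\,d\tau.
\end{align}
Setting $X_T:=\widetilde{C}([0,T];\fB_{p,1}^{\f{2}{p}-1})\cap \widetilde{L^1}(0,T;\fB_{p,1}^{\f{2}{p}+1})$, Lemma \ref{lemm:max-heat} (with $(r,r_1)=(\infty,1)$ and $(1,1)$) controls the linear part by $\n{w_0}_{\fB_{p,1}^{\f{2}{p}-1}}$ and the Duhamel part by $\n{w\otimes w}_{\widetilde{L^1}(0,T;\fB_{p,1}^{\f{2}{p}})}$, since $\mathbb{P}$ is bounded and $\div$ costs one derivative. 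Because $\fB_{p,1}^{\f{2}{p}}(\R^2)$ is an algebra (Lemma \ref{lemm-fg-general} with $s=\f{2}{p}$, $p_1=p_2=p$, $\alpha_1=\alpha_2=0$) and $X_T\hookrightarrow \widetilde{L^2}(0,T;\fB_{p,1}^{\f{2}{p}})$ by interpolation, the bilinear term is bounded by $C\n{w}_{\widetilde{L^2}(0,T;\fB_{p,1}^{\f{2}{p}})}^2\leq C\n{w}_{X_T}^2$. A contraction argument then produces a unique maximal solution on some $[0,T^*)$, global for small data.

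The crux is to upgrade this to a uniform bound on $\n{w}_{X_T}$ for all $T<T^*$, which by the bilinear estimate reduces to bounding $\n{w}_{\widetilde{L^2}(0,T^*;\fB_{p,1}^{\f{2}{p}})}$. Here I would invoke the genuinely two-dimensional energy: although $w_0$ need not lie in $L^2$ when $p>2$, the heat flow regularizes instantly — since $\widehat{w_0}\in L^{p'}$ with $p'\leq 2$, one checks $w(t)\in L^2$ for every $t>0$ — and the divergence-free structure yields the energy identity $\n{w(t)}_{L^2}^2+2\mu\int_{t_0}^t\n{\nabla w}_{L^2}^2=\n{w(t_0)}_{L^2}^2$ for $0<t_0<t<T^*$, giving a bound on $\n{\nabla w}_{L^2(t_0,T^*;L^2)}$ independent of $T^*$. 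I would then propagate the critical norm through a frequency-localized Gronwall argument: splitting $\fB_{p,1}^{\f{2}{p}}$ via the truncated seminorms $\n{\cdot}^{\ell;\beta}$ and $\n{\cdot}^{h;\beta}$, the low-frequency part is dominated by the energy quantity $\n{\nabla w}_{L^2}$ (on a dyadic shell, Bernstein converts $\widehat{L^p}$ into $L^2$ since $p'\leq 2$), while the high-frequency part is absorbed by the dissipation $\n{w}_{\fB_{p,1}^{\f{2}{p}+1}}$. Using the interpolation $\n{w}_{\fB_{p,1}^{\f{2}{p}}}\leq\n{w}_{\fB_{p,1}^{\f{2}{p}-1}}^{1/2}\n{w}_{\fB_{p,1}^{\f{2}{p}+1}}^{1/2}$ and Young's inequality, matters reduce to
\begin{align}
\f{d}{dt}\n{w}_{\fB_{p,1}^{\f{2}{p}-1}}+c\,\n{w}_{\fB_{p,1}^{\f{2}{p}+1}}\leq h(t)\,\n{w}_{\fB_{p,1}^{\f{2}{p}-1}},
\end{align}
with $h\in L^1(t_0,\infty)$ governed by the energy, and Gronwall closes the bound globally.

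Uniqueness follows from the same bilinear estimate applied to the difference of two solutions, and the uniform bound together with the continuation criterion forces $T^*=\infty$; the membership in $\widetilde{C}([0,\infty);\fB_{p,1}^{\f{2}{p}-1})$ is then standard. I expect the \emph{main obstacle} to be the second paragraph: unlike the case $p=2$, where $\fB_{2,1}^0=\dB_{2,1}^0\hookrightarrow L^2$ makes the energy immediately available, for $p>2$ the data is not square-integrable, so one must justify the instantaneous $L^2$-smoothing and, more delicately, transfer the $\ell^2$-type, $L^2$-based energy control into the $\ell^1$-summed Fourier--Besov critical norm. The endpoint $p=4$ is borderline, since the product and para-product estimates of Lemma \ref{lemm-fg-general} degrade there (the condition $s+\min\{\f{2}{p},\f{2}{p'}\}>0$ becomes an equality at $s=\f{2}{p}-1$), so the frequency-localized energy transfer and this summation mismatch are where the real work lies.
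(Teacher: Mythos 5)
Your overall architecture (Duhamel fixed point for local existence, then a global bound on the critical norm obtained by feeding the $2$D energy into a Gronwall argument) matches the paper's in spirit, but the step you yourself flag as the crux contains a genuine error: the claimed instantaneous $L^2$-smoothing is false for $p>2$. Membership $w_0\in\fB_{p,1}^{\frac{2}{p}-1}(\R^2)$ only gives $\widehat{\phi_j}\,\widehat{w_0}\in L^{p'}$ with $p'<2$, and on a frequency annulus (a set of finite measure) $L^{p'}$ does \emph{not} embed into $L^2$ when $p'<2$; for instance $\widehat{w_0}(\xi)\sim |\xi-\xi_0|^{-\alpha}$ near an interior point $\xi_0$ with $1\leq\alpha<2/p'$ is locally $L^{p'}$ but not locally $L^2$. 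Since $e^{t\mu\Delta}$ acts as the bounded Fourier multiplier $e^{-t\mu|\xi|^2}$, it improves decay at high frequencies but cannot improve the local integrability of $\widehat{w_0}$ on a fixed annulus, so $e^{t\mu\Delta}w_0\notin L^2(\R^2)$ for any $t>0$ in such examples, and hence neither is $w(t)$. Consequently the energy identity you want to run from a positive time $t_0$ is not available, and the quantity $h\in L^1$ driving your Gronwall inequality is never produced. (The Bernstein conversion you invoke, ``on a dyadic shell $\widehat{L^p}$ converts into $L^2$ since $p'\leq 2$,'' goes in the wrong direction for exactly the same reason.)

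The paper circumvents this with a Calder\'on-type splitting rather than smoothing: by density of $L^2(\R^2)$ in $\fB_{p,1}^{\frac{2}{p}-1}(\R^2)$ one writes $w_0=w_0^{(1)}+w_0^{(2)}$ with $w_0^{(1)}\in L^2$ and $\n{w_0^{(2)}}_{\fB_{p,1}^{\frac{2}{p}-1}}\leq\delta$, solves \eqref{eq:lim-2} globally for the small datum $w_0^{(2)}$ by the small-data theory (Lemma \ref{lemm:lim-loc-1}), and then solves the perturbed system \eqref{w-perturb} for $w^{(1)}$ around $v=w^{(2)}$; the energy estimate is performed on $w^{(1)}$, whose datum genuinely lies in $L^2$, with the cross term $\langle (w^{(1)}\cdot\nabla)w^{(1)},w^{(2)}\rangle_{L^2}$ absorbed by the dissipation thanks to the smallness of $w^{(2)}$ in $L^2(0,t;\fB_{p,1}^{\frac{2}{p}})$. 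The resulting bound $\int_0^T\n{w^{(1)}(t)}_{\dB_{2,1}^{1/2}}^4\,dt<\infty$ (by real interpolation of $L^\infty L^2$ and $L^2\dH^1$), combined with the global smallness of $w^{(2)}$, supplies precisely the $L^1$-in-time weight for the Gronwall propagation of the critical $\fB_{p,1}^{\frac{2}{p}-1}$ norm that your sketch presupposes. With this substitution for your second paragraph the rest of your argument goes through; without it, the proof does not close.
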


The second proposition is concerned with the \emph{local} well-posedness for the compressible Navier--Stokes--Korteweg system. 
\begin{prop}\label{lemm:LWP-CNSK}
    Let $1 \leq p \leq \infty$.
    Then, for any $a_0 \in \fB_{p,1}^{\frac{2}{p}-1}(\mathbb{R}^2) \cap \fB_{p,1}^{\frac{2}{p}}(\mathbb{R}^2)$ and $v_0 \in \fB_{p,1}^{\frac{2}{p}-1}(\mathbb{R}^2)$, there exists a positive constant $\varepsilon_0=\varepsilon_0(\mu,\kappa,p,a_0,v_0)$ and a positive time $T_0=T_0(\mu,\kappa,p,a_0,v_0)$ such that for any $0<\varepsilon \leq \varepsilon_0$, the system \eqref{eq:NSK-2} possesses a unique local solution $(\ae,\ve)$ in the class 
    \begin{align}
        \ae, \varepsilon \nabla \ae, \ve 
        \in \widetilde{C}([0,T_0];\fB_{p,1}^{\frac{2}{p}-1}(\mathbb{R}^2)) \cap L^1(0,T_0;\fB_{p,1}^{\frac{2}{p}+1}(\mathbb{R}^2))
    \end{align}
    with $\rhoe(t,x) = 1 +\varepsilon \ae(t,x) >0$.
\end{prop}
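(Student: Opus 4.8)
The plan is to build the solution by a contraction-mapping argument anchored on the linear maximal-regularity estimate of Lemma~\ref{lemm:lin-ene}, viewing the nonlinearities $\mathcal{A}[\ae,\ve]$ and $\mathcal{V}_{\varepsilon}[\ae,\ve]$ as prescribed forces. For a small time $T_0>0$ I would work in
\[
 E_{T_0}:=\Mp{(a,v)\colon (a,\varepsilon\nabla a,v)\in\widetilde{C}([0,T_0];\fB_{p,1}^{\frac{2}{p}-1})\cap L^1(0,T_0;\fB_{p,1}^{\frac{2}{p}+1})},
\]
with norm $\n{(a,v)}_{E_{T_0}}:=\n{(a,\varepsilon\nabla a,v)}_{\widetilde{L^\infty}(0,T_0;\fB_{p,1}^{\frac{2}{p}-1})}+\n{(a,\varepsilon\nabla a,v)}_{L^1(0,T_0;\fB_{p,1}^{\frac{2}{p}+1})}$, and define $\Phi(a,v)$ to be the solution of the linear system \eqref{eq:NSK-lin-1} with data $(a_0,v_0)$ and force $(f,g)=(-\mathcal{A}[a,v],-\mathcal{V}_{\varepsilon}[a,v])$. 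Applying Lemma~\ref{lemm:lin-ene} with $s=\frac{2}{p}-1$ and $r\in\{1,\infty\}$ reduces everything to estimating $\n{(\mathcal{A},\varepsilon\nabla\mathcal{A},\mathcal{V}_{\varepsilon})}_{\widetilde{L^1}(0,T_0;\fB_{p,1}^{\frac{2}{p}-1})}$ by a superlinear function of $\n{(a,v)}_{E_{T_0}}$, plus the corresponding Lipschitz bound for differences. Note that the coupled force $\varepsilon\nabla f$ is needed only for $\mathcal{A}$, not for $\mathcal{V}_\varepsilon$.

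The forcing estimates rely on the product estimate (Lemma~\ref{lemm-fg-general} and the corollary \eqref{prod-est-B}) and the composition estimate (Lemma~\ref{lemm-comp}). For $\mathcal{A}[a,v]=\div(av)$ I would use $\n{\div(av)}_{\fB_{p,1}^{\frac{2}{p}-1}}\le C\n{av}_{\fB_{p,1}^{\frac{2}{p}}}$ and, for the coupled quantity, $\n{\varepsilon\nabla\div(av)}_{\fB_{p,1}^{\frac{2}{p}-1}}\le C\varepsilon\n{av}_{\fB_{p,1}^{\frac{2}{p}+1}}$, always distributing the factor $\varepsilon$ onto a density derivative so that only $\varepsilon\nabla a$ (and never a bare $1/\varepsilon$) appears; interpolating $E_{T_0}\hookrightarrow\widetilde{L^2}(0,T_0;\fB_{p,1}^{\frac{2}{p}})$ and applying Hölder in time renders these contributions at least quadratic. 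In $\mathcal{V}_\varepsilon$, the convection term $(v\cdot\nabla)v$ is treated the same way, and $\mathcal{J}(\varepsilon a)\mathcal{L}v$ is controlled by Lemma~\ref{lemm-comp} together with a product estimate — since $\mathcal{J}(0)=0$, this even gains a power of $\varepsilon$. The decisive point is the pressure term $\frac{1}{\varepsilon}\mathcal{K}(\varepsilon a)\nabla a$: its prefactor $1/\varepsilon$ is only \emph{apparently} singular, because the normalization $\rho_\infty=c_\infty=1$ forces $\mathcal{K}(0)=P'(1)-1=0$, so Lemma~\ref{lemm-comp} gives $\n{\mathcal{K}(\varepsilon a)}_{\fB_{p,1}^{\frac{2}{p}}}\le C\n{\varepsilon a}_{\fB_{p,1}^{\frac{2}{p}}}$ and hence $\n{\frac{1}{\varepsilon}\mathcal{K}(\varepsilon a)}_{\fB_{p,1}^{\frac{2}{p}}}\le C\n{a}_{\fB_{p,1}^{\frac{2}{p}}}$ uniformly in $\varepsilon$; after one product estimate this term is again quadratic.

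Because the data are allowed to be large, the smallness needed to absorb these quadratic terms must come from the short time interval. Writing $\Phi(a,v)$ as the free evolution (zero force) plus its Duhamel correction, the pointwise bound
\[
 |(\hae,\hve)(t,\xi)|+\kappa|\varepsilon i\xi\,\hae(t,\xi)|\le Ce^{-\frac{\delta}{2}|\xi|^2 t}\sp{|(\widehat{a_0},\widehat{v_0})(\xi)|+\kappa|\varepsilon i\xi\,\widehat{a_0}(\xi)|}
\]
derived inside the proof of Lemma~\ref{lemm:lin-ene} exhibits heat-type decay for the free part; as the summation index is $\sigma=1$, dominated convergence shows that its gain norm in $L^1(0,T_0;\fB_{p,1}^{\frac{2}{p}+1})$ (and the interpolated $\widetilde{L^2}(0,T_0;\fB_{p,1}^{\frac{2}{p}})$ norm) tends to $0$ as $T_0\downarrow0$, while its $\widetilde{L^\infty}(0,T_0;\fB_{p,1}^{\frac{2}{p}-1})$ norm stays bounded by the data. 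Hence, for $T_0$ small, $\Phi$ maps a suitable closed ball of $E_{T_0}$ into itself and is a contraction there (measured in the auxiliary norm $\widetilde{L^2}(0,T_0;\fB_{p,1}^{\frac{2}{p}})$), and the Banach fixed point theorem yields a unique local solution; uniqueness follows from the same difference bound. Positivity $\rhoe=1+\varepsilon\ae>0$ is then obtained from $\varepsilon\ae\in\widetilde{L^\infty}(0,T_0;\fB_{p,1}^{\frac{2}{p}})\hookrightarrow\widehat{L^{\infty}}\hookrightarrow L^\infty$, whose norm is controlled by the data and is $<1$ once $\varepsilon\le\varepsilon_0$.

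The main obstacle throughout is to keep all constants independent of the Mach number as $\varepsilon\downarrow0$. This is exactly why the pressure term has to be recognized as nonsingular via $\mathcal{K}(0)=0$, and why the composition Lemma~\ref{lemm-comp} — which is only valid under the smallness $\n{\varepsilon a}_{\fB_{p,1}^{\frac{2}{p}}}\le\delta_0 R_0$ of its argument — can be invoked: on $[0,T_0]$ the a priori bound controls $\n{\varepsilon a}_{\fB_{p,1}^{\frac{2}{p}}}$ by the data together with $\n{\varepsilon\nabla a_0}_{\fB_{p,1}^{\frac{2}{p}-1}}=\varepsilon\n{\nabla a_0}_{\fB_{p,1}^{\frac{2}{p}-1}}$, which becomes small precisely for $\varepsilon$ small, thereby fixing the threshold $\varepsilon_0=\varepsilon_0(\mu,\kappa,p,a_0,v_0)$. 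The coupled quantity $\varepsilon\nabla\ae$, which appears simultaneously in the linear estimate and in the force $\varepsilon\nabla\mathcal{A}$, must be propagated as a single unknown and never detached from the density, so that the $1/\varepsilon$ factors are never resurrected.
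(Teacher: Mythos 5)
Your proposal follows essentially the same route as the paper's Appendix~\ref{sec:LWP}: a Duhamel/contraction argument based on Lemma~\ref{lemm:lin-ene}, with the gain norms of the free evolution made small by shrinking $T_0$ (dominated convergence on $\n{\Delta_j(\ael,\varepsilon\nabla\ael,\vel)}_{\widehat{L^p}}\leq Ce^{-c2^{2j}t}\n{\Delta_j(a_0,\nabla a_0,v_0)}_{\widehat{L^p}}$), the pressure term de-singularized through $\mathcal{K}(0)=P'(1)-1=0$ and Lemma~\ref{lemm-comp}, and the threshold $\varepsilon_0$ tied to the smallness of $\varepsilon\nabla a$-type quantities.

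One step, as you have written it, does not close, and it is precisely the step where the paper's choice of working space matters. You attribute the absorption of the quadratic terms entirely to the short time interval, with the ball in $E_{T_0}$ having radius comparable to the (large) data. But the product estimates for $\varepsilon\nabla\div(av)$ and for $\mathcal{J}(\varepsilon a)\mathcal{L}v$ necessarily produce the term
\begin{align}
\n{\varepsilon\nabla a}_{L^{\infty}(0,T_0;\fB_{p,1}^{\frac{2}{p}-1})}\,\n{v}_{L^1(0,T_0;\fB_{p,1}^{\frac{2}{p}+1})},
\end{align}
in which the second factor, for a general element of the ball, is only bounded by the ball radius and does not shrink with $T_0$. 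If the $L^{\infty}$-in-time factor is allowed to be of the size of the data, the self-map inequality for the gain-norm component reads $\eta_{\rm new}\leq \eta_{\rm free}+CM\eta+C\eta^{2}$ with $M$ large, which cannot be closed. This is why the paper's norm $\n{\cdot}_{Z_T^{\varepsilon}}$ deliberately contains $\n{\varepsilon\nabla a}_{L^{\infty}(0,T;\fB_{p,1}^{\frac{2}{p}-1})}$ (and none of the other $L^{\infty}$-in-time components), with the ball radius being the absolute constant $1/(2C_0)$: the free part satisfies $\n{\varepsilon\nabla\ael}_{L^{\infty}(0,\infty;\fB_{p,1}^{\frac{2}{p}-1})}\leq 1/(8C_0)$ only after invoking Lemma~\ref{lemm:lin-0-1} for $\varepsilon\leq\varepsilon_0$, and this is where $\varepsilon_0$ genuinely enters the self-map, not only in the domain condition of the composition lemma. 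You have all the ingredients for this (you insist that $\varepsilon\nabla\ae$ be propagated as a single unknown and that $\varepsilon\n{\nabla a_0}$ be made small), but the ball must be defined so that this particular $L^{\infty}$-in-time component carries an $O(1)$-small bound rather than a data-sized one; with that modification your argument coincides with the paper's.
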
 
The proof of the two propositions above is provided in Appendix \ref{sec:a} and \ref{sec:LWP}.

\begin{proof}[Proof of Theorem \ref{thm}]
Let $2 \leq p<4$ and fix a $\theta=\theta(p)$ so that $p<\theta<4$. 
Let $a_0 \in \fB_{p,1}^{\frac{2}{p}-1}(\mathbb{R}^2) \cap \fB_{p,1}^{\frac{2}{p}}(\mathbb{R}^2)$ and $v_0 \in \fB_{p,1}^{\frac{2}{p}-1}(\mathbb{R}^2)$.
Let $w \in \widetilde{C}([0,\infty);\fB_{p,1}^{\frac{2}{p}-1}(\mathbb{R}^2)) \cap L^1(0,\infty;\fB_{p,1}^{\frac{2}{p}+1}(\mathbb{R}^2))$ be the solution to \eqref{eq:lim-1} with the initial value $\mathbb{P}v_0$.
Let $(\ael,\vel)$ be the solution to 
%\footnote{As $\mathbb{Q}\vel=\vel$, $\mathcal{L}\vel=\Delta\vel$.}
\begin{align}
    \begin{dcases}
    \partial_t \ael + \frac{1}{\varepsilon} \div \vel = 0, \\
    \partial_t \vel - \Delta  \vel - \kappa \Delta (\varepsilon \nabla \ael) + \frac{1}{\varepsilon} \nabla \ael = -\mathbb{Q} (w \cdot \nabla)w,\\
    \ael(0,x) = a_0(x),\quad \vel(0,x) = \mathbb{Q}v_0(x).
    \end{dcases}
\end{align}
Here, by Lemma \ref{lemm:lin-ene}, we see that for all $1\leq r\leq \infty$ 
\begin{align}
    &
    \n{\sp{\ael,\varepsilon \nabla \ael,\vel}}_{\widetilde{L^{r}}(0,\infty;\fB_{p,1}^{\frac{2}{p}-1+\frac{2}{r}})}\\
    &\quad 
    \leq
    C
    \n{\sp{a_0,\varepsilon \nabla a_0,v_0}}_{\fB_{p,1}^{\frac{2}{p}-1}}
    +
    C
    \n{(w \cdot \nabla)w}_{L^1(0,\infty;\fB_{p,1}^{\frac{2}{p}-1})}\\
    &\quad
    \leq
    C
    \n{\sp{a_0,\varepsilon \nabla a_0,v_0}}_{\fB_{p,1}^{\frac{2}{p}-1}}
    +
    C\n{w}_{ \widetilde{L^{\infty}}(0,\infty;\fB_{p,1}^{\frac{2}{p}-1})
  \cap 
  \widetilde{L^1}(0,\infty;\fB_{p,1}^{\frac{2}{p}+1}) }^2,
\end{align}
which imply
\begin{align}\label{A}
    & 
    \n{\sp{\ael,\varepsilon \nabla \ael, \vel,w}}_{L^r(0,\infty;\dB_{\theta,1}^{\frac{2}{\theta}-1+\frac{2}{r}})}\\
     &\quad
    \leq 
    C 
    \n{\sp{\ael,\varepsilon \nabla \ael, \vel,w}}
    _{L^{r}(0,\infty;\fB_{p,1}^{\frac{2}{p}-1+\frac{2}{r}})} \\ 
     &\quad
    \leq 
    C 
    \n{\sp{\ael,\varepsilon \nabla \ael, \vel,w}}
    _{\widetilde{L^{r}}(0,\infty;\fB_{p,1}^{\frac{2}{p}-1+\frac{2}{r}})} \\ 
    &\quad 
    \leq
    C
    \n{\sp{a_0,\varepsilon \nabla a_0,v_0}}_{\fB_{p,1}^{\frac{2}{p}-1}}
    +  C\n{w}_{ \widetilde{L^{\infty}}(0,\infty;\fB_{p,1}^{\frac{2}{p}-1})
  \cap 
  \widetilde{L^1}(0,\infty;\fB_{p,1}^{\frac{2}{p}+1}) }^2 \\
    &\qquad
    +
     C\n{w}_{ \widetilde{L^{\infty}}(0,\infty;\fB_{p,1}^{\frac{2}{p}-1})
  \cap 
  \widetilde{L^1}(0,\infty;\fB_{p,1}^{\frac{2}{p}+1}) }  \\
    &\quad=:A[a_0,v_0].
\end{align} 
Here, we notice that $A[a_0,v_0]$ depends only on $a_0$ and $v_0$, since $w$ is uniquely determined by $\mathbb{P}v_0$. Let $(\ae,\ve)$ be the local solution to \eqref{eq:NSK-2} in the class 
\begin{align}
    \ae, \varepsilon \nabla \ae, \ve 
    \in 
    \widetilde{C}
    ([0,T_{\varepsilon}^{\rm max});\fB_{p,1}^{\frac{2}{p}-1}(\mathbb{R}^2)) 
    \cap 
    L^1(0,T_{\varepsilon}^{\rm max};\fB_{p,1}^{\frac{2}{p}+1}(\mathbb{R}^2)),
\end{align}
where $T_{\varepsilon}^{\rm max}$ denotes the maximal existence time.
To show the global well-posedness, it suffices to prove the following quantity is infinite:
\begin{align}
    T_{\varepsilon}^*
    :=
    \sup
    \Mp{
    T \in (0,T_{\varepsilon}^{\rm max})\ ;\ 
    \n{\sp{\Ae,\varepsilon \nabla \Ae,\Ve}}_{\widetilde{L^{\infty}}(0,T;\fB_{p,1}^{\frac{2}{p}-1}) \cap L^1(0,T;\fB_{p,1}^{\frac{2}{p}+1})}
    \leq \delta_0
    },
\end{align}
where $\delta_0$ is a positive constant to be determined later, and we have set
$\Ae:=\ae - \ael$ and $\Ve:= \ve - w - \vel$. 
We notice that Proposition \ref{lemm:LWP-CNSK} implies $T_{\varepsilon}^*>0$.
To prove $T_{\varepsilon}^*=\infty$,
we suppose by contradiction that $T_{\varepsilon}^*<\infty$ and let $0<T<T_{\varepsilon}^*$.
The perturbations $(\Ae,\Ve)$ should solve
\begin{align}
    \begin{dcases}
    \partial_t \Ae + \frac{1}{\varepsilon} \div \Ve = - \mathcal{A}\lp{\ae,\ve}, \\
    \partial_t \Ve - \mathcal{L} \Ve - \kappa \Delta (\varepsilon \nabla \Ae) + \frac{1}{\varepsilon} \nabla \Ae = - \sp{\mathcal{V}_{\varepsilon}[\ae,\ve]-(w \cdot \nabla)w},\\
    \Ae(0,x) = 0,\quad 
    \Ve(0,x) = 0.
    \end{dcases}
\end{align}
We decompose nonlinear terms into three parts:
\begin{align}
    &
    \mathcal{A}\lp{\ae,\ve}
    =
    \mathcal{A}_{\varepsilon}^{(0)}
    +
    \mathcal{A}_{\varepsilon}^{(1)}
    +
    \mathcal{A}_{\varepsilon}^{(2)},\\
    &
    \mathcal{V}_{\varepsilon}\lp{\ae,\ve}
    -(w \cdot \nabla)w
    =
    \mathcal{V}_{\varepsilon}^{(0)}
    +
    \mathcal{V}_{\varepsilon}^{(1)}
    +
    \mathcal{V}_{\varepsilon}^{(2)},
\end{align}
where we have set 
\begin{align}
    &
    \mathcal{A}_{\varepsilon}^{(0)}
    :=
    \div (\ael\vel) + \div(\ael w),\\
    &
    \mathcal{A}_{\varepsilon}^{(1)}
    :=
    \div(\ael \Ve) + \div (\Ae \vel) + \div (\Ae w),\\ 
    &
    \mathcal{A}_{\varepsilon}^{(2)}
    :=
    \div (\Ae \Ve),\\
    &
    \begin{aligned}
    \mathcal{V}_{\varepsilon}^{(0)}
    :={}
    &
    (\vel \cdot \nabla)\vel 
    +
    (\vel \cdot \nabla)w 
    +
    (w \cdot \nabla)\vel\\
    &
    +
    \mathcal{J}(\varepsilon \ael)
    \mathcal{L}w
    +
    \mathcal{J}(\varepsilon \ael)
    \mathcal{L}\vel
    +
    \frac{1}{\varepsilon}
    \mathcal{K}(\varepsilon \ael)\nabla \ael ,
    \end{aligned}
    \\
    &
    \begin{aligned}
    \mathcal{V}_{\varepsilon}^{(1)}
    :=
    &
    (\Ve \cdot \nabla)\vel 
    +
    (\Ve \cdot \nabla)w 
    +
    (\vel \cdot \nabla)\Ve
    +
    (w \cdot \nabla)\Ve\\
    &
    +
    \sp{\mathcal{J}(\varepsilon \ae)-\mathcal{J}(\varepsilon \ael)}
    \mathcal{L}w
    +
    \sp{\mathcal{J}(\varepsilon \ae)-\mathcal{J}(\varepsilon \ael)}
    \mathcal{L}\vel\\
    &+
    \mathcal{J}(\varepsilon \ael)
    \mathcal{L}\Ve
    +
    \frac{1}{\varepsilon}
    \sp{\mathcal{K}(\varepsilon \ae) - \mathcal{K}(\varepsilon \ael)}
    \nabla \ael
    +
    \frac{1}{\varepsilon}
    \mathcal{K}(\varepsilon \ael)\nabla \Ae,
    \end{aligned}
    \\
    &
    \mathcal{V}_{\varepsilon}^{(2)}
    :=
    (\Ve \cdot \nabla)\Ve
    +
    \sp{\mathcal{J}(\varepsilon \ae)-\mathcal{J}(\varepsilon \ael)}
    \mathcal{L}     
    \Ve 
    +
    \frac{1}{\varepsilon}
    \sp{\mathcal{K}(\varepsilon \ae) - \mathcal{K}(\varepsilon \ael)}
    \nabla \Ae.
\end{align}
It follows from Lemma \ref{lemm:lin-ene} that 
\begin{align}\label{p}
    &\n{\sp{\Ae,\varepsilon \nabla \Ae,\Ve}}_{\widetilde{L^{\infty}}(0,T;\fB_{p,1}^{\frac{2}{p}-1}) \cap L^1(0,T;\fB_{p,1}^{\frac{2}{p}+1})}
    \\
    &\quad
    \leq 
    C
    \sum_{m=0,1,2}
    \int_0^T
    \n{\sp{\mathcal{A}_{\varepsilon}^{(m)},\varepsilon \nabla \mathcal{A}_{\varepsilon}^{(m)}, \mathcal{V}_{\varepsilon}^{(m)}}(t)}_{\fB_{p,1}^{\frac{2}{p}-1}}
    dt
    =:
    C 
    \sum_{m=0,1,2}
    \mathcal{I}_{\varepsilon}^{(m)}.
\end{align}

We first consider the estimate of $\mathcal{I}_{\varepsilon}^{(0)}$.
We see from the continuous embedding $\dot{B}^{0}_{2,1}(\R^2) \hookrightarrow \fB_{p,1}^{\f{2}{p}-1}(\R^2)$ that 
\begin{align}
    &
    \n{\varepsilon \nabla \div (\ael\vel)}_{L^1(0,T;\fB_{p,1}^{\frac{2}{p}-1})}\\
    &\quad\leq{}
    C
    \n{\varepsilon \nabla \div T_{\ael}\vel}_{L^1(0,T;\dB_{2,1}^{0})}^{\ell;\frac{1}{\varepsilon}}
    +
    C
    \n{\varepsilon \nabla \div T_{\ael}\vel}_{L^1(0,T;\fB_{p,1}^{\frac{2}{p}-1})}^{h;\frac{1}{\varepsilon}}\\
    &\qquad
    +
    C
    \n{\varepsilon \nabla \div R(\ael,\vel)}_{L^1(0,T;\dB_{2,1}^{0})}
    +
    C
    \n{\varepsilon \nabla \div T_{\vel}\ael}_{L^1(0,T;\dB_{2,1}^{0})}.
\end{align} 
By Lemma \ref{lemm-Tfg-Rfg}, Lemma \ref{lemm-Tfg-low}, the continuous embedding $\dot{B}_{\theta,1}^{\f{2}{\theta}-\f{1}{2}  }(\R^2) \hookrightarrow \dot{B}_{\f{2\theta}{\theta-2},1}^{ \f{1}{2}- \f{2}{\theta}  } (\R^2)  $ and the fact that $\n{f}_{ \dot{B}_{p,1}^{s+1} }^{\ell;\f{4}{\varepsilon}} \leq (C/\varepsilon) \n{f}_{ \dot{B}_{p,1}^{s} }  $ for all $1\leq p \leq \infty$ and $s  \in \mathbb{R}$, it holds
\begin{align}
    &
    \begin{aligned}
    \n{\varepsilon \nabla \div T_{\ael}\vel}_{L^1(0,T;\dB_{2,1}^{0})}^{\ell;\frac{1}{\varepsilon}}
    \leq{}&
    C\varepsilon
    \n{\ael}_{L^4(0,T;\dB_{\theta,1}^{\frac{2}{\theta}-\frac{1}{2}})}
    \n{\vel}_{L^{\frac{4}{3}}(0,T;\dB_{\theta,1}^{\frac{2}{\theta}+\frac{3}{2}})}^{\ell;\frac{4}{\varepsilon}}\\
    \leq{}&
    C
    \n{\ael}_{L^4(0,T;\dB_{\theta,1}^{\frac{2}{\theta}-\frac{1}{2}})}
    \n{\vel}_{L^{\frac{4}{3}}(0,T;\dB_{\theta,1}^{\frac{2}{\theta}+\frac{1}{2}})},
    \end{aligned}
    \\
    &
    \begin{aligned}
    &
    \n{\varepsilon \nabla \div R(\ael,\vel)}_{L^1(0,T;\dB_{2,1}^{0})}
    +
    \n{\varepsilon \nabla \div T_{\vel}\ael}_{L^1(0,T;\dB_{2,1}^{0})}\\
    &\quad
    \leq 
    C
    \n{\vel}_{L^4(0,T;\dB_{\theta,1}^{\frac{2}{\theta}-\frac{1}{2}})}
    \n{\varepsilon \nabla \ael}_{L^{\frac{4}{3}}(0,T;\dB_{\theta,1}^{\frac{2}{\theta}+\frac{1}{2}})} .
    \end{aligned}
\end{align} 
Due to Lemma \ref{lemm-Tfg-low} and the continuous embedding $\fB_{p,1}^{\f{2}{p}}(\R^2) \hookrightarrow \fB_{\infty,1}^{0}(\R^2)$, 
 \begin{align}
    \n{\varepsilon \nabla \div T_{\ael}\vel}_{L^1(0,T;\fB_{p,1}^{\frac{2}{p}-1})}^{h;\frac{1}{\varepsilon}}
    \leq{}&
    C
    \n{\varepsilon \nabla \ael}_{L^{\infty}(0,T;\fB_{p,1}^{\frac{2}{p}-1})}
    \n{\vel}_{L^1(0,T;\fB_{p,1}^{\frac{2}{p}+1})}^{h;\frac{1}{4\varepsilon}}.
    \end{align}
Thus, we have
\begin{align}
    &
    \n{\varepsilon \nabla \div (\ael\vel)}_{L^1(0,T;\fB_{p,1}^{\frac{2}{p}-1})}
    +
    \n{\varepsilon \nabla \div (\ael w)}_{L^1(0,T;\fB_{p,1}^{\frac{2}{p}-1})}\\
    &\quad\leq{}
    C
    \n{\varepsilon \nabla \ael}_{L^{\infty}(0,T;\fB_{p,1}^{\frac{2}{p}-1})}
    \n{\sp{\vel,w}}_{L^1(0,T;\fB_{p,1}^{\frac{2}{p}+1})}^{h;\frac{1}{4\varepsilon}}\\
    &\qquad
    +
    C
    \n{\sp{\vel,w}}_{L^4(0,T;\dB_{\theta,1}^{\frac{2}{\theta}-\frac{1}{2}})\cap L^{\frac{4}{3}}(0,T;\dB_{\theta,1}^{\frac{2}{\theta}+\frac{1}{2}})}\\
    &\qquad \qquad \times
    \n{\sp{\ael,\varepsilon \nabla \ael}}_{L^4(0,T;\dB_{\theta,1}^{\frac{2}{\theta}-\frac{1}{2}})\cap L^{\frac{4}{3}}(0,T;\dB_{\theta,1}^{\frac{2}{\theta}+\frac{1}{2}})}.
\end{align}
Similarly, we have 
\begin{align}
    &
    \n{\mathcal{J}(\varepsilon \ael)\mathcal{L}\vel}_{L^1(0,T;\fB_{p,1}^{\frac{2}{p}-1})}
    +
    \n{\mathcal{J}(\varepsilon \ael)\mathcal{L}w}_{L^1(0,T;\fB_{p,1}^{\frac{2}{p}-1})}
    \\
    &\quad\leq{}
    C
    \n{\varepsilon \nabla \ael}_{L^{\infty}(0,T;\fB_{p,1}^{\frac{2}{p}-1})}
    \n{\sp{\vel,w}}_{L^1(0,T;\fB_{p,1}^{\frac{2}{p}+1})}^{h;\frac{1}{4\varepsilon}}\\
    &\qquad
    +
    C
    \n{\sp{\vel,w}}_{L^4(0,T;\dB_{\theta,1}^{\frac{2}{\theta}-\frac{1}{2}})\cap L^{\frac{4}{3}}(0,T;\dB_{\theta,1}^{\frac{2}{\theta}+\frac{1}{2}})}\\
    &\qquad \qquad \times
    \n{\sp{\ael,\varepsilon \nabla \ael}}_{L^4(0,T;\dB_{\theta,1}^{\frac{2}{\theta}-\frac{1}{2}})\cap L^{\frac{4}{3}}(0,T;\dB_{\theta,1}^{\frac{2}{\theta}+\frac{1}{2}})}.
\end{align} 
To handle the other terms, we invoke the following product estimate in Besov spaces from \cite{Fuj-23-05}*{Lemma 4.3}, where it is stated that for $2\leq q< 4$ and $q\leq r \leq \infty $,
there exists a positive constant $C=C(q,r)$ such that 
\begin{align}
  \n{ f \cdot \Grad g}_{L^1(0,T;\dB_{2,1}^0)}
& \leq
 C
 \n{g}_{L^r (0,T;\dB_{q,1}^{ \f{2}{q}-1+\f{2}{r}  })}
  \n{f}_{L^{r'}(0,T;\dB_{q,1}^{ \f{2}{q}-1+\f{2}{r'}  })} \\
  & \quad 
  +
  C
  \n{g}_{L^{r'}(0,T;\dB_{q,1}^{ \f{2}{q}-1+\f{2}{r'}  })}
   \n{g}_{L^r (0,T;\dB_{q,1}^{ \f{2}{q}-1+\f{2}{r}  })}. \label{est-f-grad-g} 
\end{align}
Employing the above estimate with $q=\theta,r=4$ readily yields  
\begin{align}
    &
    \begin{aligned}
    &
    \n{\div (\ael\vel)
    + 
    \div(\ael w)}_{L^1(0,T;\fB_{p,1}^{\frac{2}{p}-1})}\\
    &\quad
    \leq
    C
    \n{\div (\ael\vel)}_{L^1(0,T;\dB_{2,1}^0)}
    +
    C
    \n{\div(\ael w)}_{L^1(0,T;\dB_{2,1}^0)}\\
    &\quad
    \leq
    C
    \n{\ael}_{L^4(0,T;\dB_{\theta,1}^{\frac{2}{\theta}-\frac{1}{2}})\cap L^{\frac{4}{3}}(0,T;\dB_{\theta,1}^{\frac{2}{\theta}+\frac{1}{2}})}
    \n{\sp{\vel,w}}_{L^4(0,T;\dB_{\theta,1}^{\frac{2}{\theta}-\frac{1}{2}})\cap L^{\frac{4}{3}}(0,T;\dB_{\theta,1}^{\frac{2}{\theta}+\frac{1}{2}})},
    \end{aligned}\\
    &
    \begin{aligned}
    &
    \n{(\vel \cdot \nabla)\vel 
    +
    (\vel \cdot \nabla)w 
    +
    (w \cdot \nabla)\vel}_{L^1(0,T;\fB_{p,1}^{\frac{2}{p}-1})}\\
    &\quad\leq
    C
    \n{(\vel \cdot \nabla)\vel 
    +
    (\vel \cdot \nabla)w 
    +
    (w \cdot \nabla)\vel}_{L^1(0,T;\dB_{2,1}^0)}\\
    &\quad
    \leq
    C
    \n{\vel}_{L^4(0,T;\dB_{\theta,1}^{\frac{2}{\theta}-\frac{1}{2}})\cap L^{\frac{4}{3}}(0,T;\dB_{\theta,1}^{\frac{2}{\theta}+\frac{1}{2}})}
    \n{\sp{\vel,w}}_{L^4(0,T;\dB_{\theta,1}^{\frac{2}{\theta}-\frac{1}{2}})\cap L^{\frac{4}{3}}(0,T;\dB_{\theta,1}^{\frac{2}{\theta}+\frac{1}{2}})},
    \end{aligned}\\
    &
    \begin{aligned}
    \n{\frac{1}{\varepsilon}
    \mathcal{K}(\varepsilon \ael)\nabla \ael}_{L^1(0,T;\fB_{p,1}^{\frac{2}{p}-1})}
    &\leq
    C
    \n{\frac{1}{\varepsilon}
    \mathcal{K}(\varepsilon \ael)\nabla \ael}_{L^1(0,T;\dB_{2,1}^0)}\\
    &\leq
    C
    \n{\ael}_{L^4(0,T;\dB_{\theta,1}^{\frac{2}{\theta}-\frac{1}{2}})\cap L^{\frac{4}{3}}(0,T;\dB_{\theta,1}^{\frac{2}{\theta}+\frac{1}{2}})}^2,
    \end{aligned}
\end{align}   
where we also used the composition lemma in the context of Besov space in the last step. Combining the above estimates and using \eqref{A}
for all $1 \leq r \leq \infty$,
we have 
\begin{align}
    \mathcal{I}_{\varepsilon}^{(0)}
    \leq{}&
    CA[a_0,v_0]
    \n{\sp{\vel,w}}_{L^1(0,T;\fB_{p,1}^{\frac{2}{p}+1})}^{h;\frac{1}{4\varepsilon}}\\
    &
    +
    CA[a_0,v_0]
    {\n{\sp{\ael,\vel}}
    _{
    L^4(0,T;\dB_{\theta,1}^{\frac{2}{\theta}-\frac{1}{2}})
    \cap 
    L^{\frac{4}{3}}(0,T;\dB_{\theta,1}^{\frac{2}{\theta}+\frac{1}{2}})}}.
\end{align}

Next, we focus on the estimates of $\mathcal{I}_{\varepsilon}^{(1)}$. Using the continuous embedding $\fB_{p,1}^{\f{2}{p}}(\R^2) \hookrightarrow \widehat{L^{\infty}}(\R^2)$ and the following product estimate for all $1\leq p,\sigma\leq \infty$ and $s>0$
\begin{align}
    \n{fg}_{\fB^s_{p,\sigma}} 
    \leq 
    C 
    \left( 
    \n{f}_{\widehat{L^{\infty}}}
    \n{g}_{\fB^s_{p,\sigma}}
    +
    \n{f}_{\fB^s_{p,\sigma}}
    \n{g}_{\widehat{L^{\infty}}}
    \right), 
\end{align}
one sees that 
\begin{align}
  &  \n{\varepsilon \nabla \div \sp{ \ael  \Ve}}_{L^1(0,T;\fB_{p,1}^{\frac{2}{p}-1})} 
    \leq
    C
    \n{\varepsilon \nabla \ael}_{L^{\infty}(0,T;\fB_{p,1}^{\frac{2}{p}-1})}
    \n{\Ve}_{L^1(0,T;\fB_{p,1}^{\frac{2}{p}+1})}\\
    & \quad \quad \quad \quad \quad \quad \quad \quad
    \quad \quad \quad \quad \quad \quad
    +
    C
    \int_0^T
    \n{\varepsilon \nabla \ael(t)}_{\fB_{p,1}^{\frac{2}{p}}}
    {\n{\Ve(t)}_{\fB_{p,1}^{\frac{2}{p}}}}
    dt, \\
  & \n{\varepsilon \nabla \div \sp{\Ae \sp{\vel + w}}}_{L^1(0,T;\fB_{p,1}^{\frac{2}{p}-1})}  \\
   & \quad \leq 
    C
    \int_0^T
    \n{\varepsilon \nabla \Ae(t)}_{\fB_{p,1}^{\frac{2}{p}-1}}
    {\n{\sp{\vel,w}(t)}_{\fB_{p,1}^{\frac{2}{p}+1}}}
    dt \\
    &\quad \quad 
    +
    C
    \int_0^T
    \n{\varepsilon \nabla \Ae(t)}_{\fB_{p,1}^{\frac{2}{p}}}
    {\n{\sp{\vel,w}(t)}_{\fB_{p,1}^{\frac{2}{p}}}}
    dt,             \\
    & \n{ \div(\ael \Ve) 
    +
    \div(\Ae \vel)
    +
    \div(\Ae w)
    }_{L^1(0,T;\fB_{p,1}^{\frac{2}{p}-1})} \\
    & \quad 
    \leq 
    C
    \int_0^T
      \n{ (\Ae,\Ve)(t)   }_{\fB_{p,1}^{\frac{2}{p}}} 
      \n{ (\ael,\vel,w  )(t)  }_{\fB_{p,1}^{\frac{2}{p}}}
    dt.
\end{align}
It follows from Lemma \ref{lemm-fg-general} with $p_1=p_2=p,s=2/p-1,(\alpha_1,\alpha_2)=(0,1)$ that\footnote{We emphasize that, in applying Lemma \ref{lemm-fg-general}, the assumption $s+2/p_1>0$ requires that $p$ is strictly less than $4$. } 
 \begin{align}
    &
    \n{(\Ve \cdot \nabla)\vel + (\Ve \cdot \nabla)w + (w \cdot \nabla)\Ve + (\vel \cdot \nabla)\Ve}_{L^1(0,T;\fB_{p,1}^{\frac{2}{p}-1})}\\
    &\quad
    \leq
    C
    \int_0^T
    \n{\sp{\vel,w}(t)}_{\fB_{p,1}^{\frac{2}{p}}}
    \n{\Ve(t)}_{\fB_{p,1}^{\frac{2}{p}}}
    dt. 
\end{align} 
In the same spirit as above, we obtain with the aid of the composition Lemma \ref{lemm-comp} that   
\begin{align}
    &
    \begin{aligned}
    &
    \n{\sp{\mathcal{J}(\varepsilon \ae) - \mathcal{J}(\varepsilon \ael)}\mathcal{L}\sp{\vel+w}}_{L^1(0,T;\fB_{p,1}^{\frac{2}{p}-1})}\\
    &\quad
    \leq
    C
    \int_0^T
    \n{\varepsilon \nabla \Ae(t)}_{\fB_{p,1}^{\frac{2}{p}-1}}
    \n{\sp{\vel,w}(t)}_{\fB_{p,1}^{\frac{2}{p}+1}}
    dt\\
    &\qquad
    +
    C
    \int_0^T
    \n{\Ae(t)}_{\fB_{p,1}^{\frac{2}{p}}}
    \n{\sp{\vel,w}(t)}_{\fB_{p,1}^{\frac{2}{p}}}
    dt,
    \end{aligned}\\
    &
    \begin{aligned}
    \n{\mathcal{J}(\varepsilon \ael)\mathcal{L}\Ve}_{L^1(0,T;\fB_{p,1}^{\frac{2}{p}-1})}
    \leq{}&
    C
    \n{\varepsilon \nabla \ael}_{L^{\infty}(0,T;\fB_{p,1}^{\frac{2}{p}-1})}
    \n{\Ve}_{L^1(0,T;\fB_{p,1}^{\frac{2}{p}+1})}\\
    &
    +
    C
    \int_0^T
    \n{\ael(t)}_{\fB_{p,1}^{\frac{2}{p}}}
    \n{\Ve(t)}_{\fB_{p,1}^{\frac{2}{p}}}
    dt,
    \end{aligned}\\
    &
    \begin{aligned}
    &
    \n{
    \frac{1}{\varepsilon}
    \sp{\mathcal{K}(\varepsilon \ae) - \mathcal{K}(\varepsilon \ael)}
    \nabla \ael
    +
    \frac{1}{\varepsilon}
    \mathcal{K}(\varepsilon \ael)\nabla \Ae
    }_{L^1(0,T;\fB_{p,1}^{\frac{2}{p}-1})}\\
    &\quad
    \leq
    C
    \int_0^T
    \n{\ael(t)}_{\fB_{p,1}^{\frac{2}{p}}}
    \n{\Ae(t)}_{\fB_{p,1}^{\frac{2}{p}}}
    dt
    \end{aligned}
\end{align} 
with the help of Young's inequality and the following interpolation inequality
\begin{align}
    \n{ f }_{\fB_{p,1}^{\frac{2}{p} }} 
    \leq{}
    \n{f }_{\fB_{p,1}^{\frac{2}{p}-1}}^{\frac{1}{2}}
    \n{ f }_{\fB_{p,1}^{\frac{2}{p}+1}}^{\frac{1}{2}}.
\end{align} 
Thus, we have
\begin{align}\label{I1}
    \mathcal{I}_{\varepsilon}^{(1)}
    \leq {}&
    C_1
    \n{\varepsilon \nabla \ael}_{L^{\infty}(0,T;\fB_{p,1}^{\frac{2}{p}-1})}
    \n{\Ve}_{L^1(0,T;\fB_{p,1}^{\frac{2}{p}+1})}\\
    &
    +
    \frac{1}{4} 
    \int_0^T
    \n{\sp{\Ae,\varepsilon\nabla\Ae,\Ve}(t)}_{\fB_{p,1}^{\frac{2}{p}+1}}
    dt\\
    &
    +
    C_1  
    \int_0^T
    \n{\sp{\Ae,\varepsilon\nabla\Ae,\Ve}(t)}_{\fB_{p,1}^{\frac{2}{p}-1}}
    \sum_{r=1,\frac{4}{3},2,4}\n{\sp{\ael,\varepsilon \nabla  \ael ,\vel,w}(t)}_{\fB_{p,1}^{\frac{2}{p}-1+\frac{2}{r}}}^r
    dt
\end{align}
for some positive constant $C_1$.
By Lemma \ref{lemm:lin-0-1} and the continuous embedding $\widetilde{L^{\infty}} (0,T;\fB_{p,1}^{\frac{2}{p}-1} (\R^2)) \hookrightarrow  L^{\infty}(0,T;\fB_{p,1}^{\frac{2}{p}-1}(\R^2))$, there exists a positive constant $\varepsilon_1$ such that 
\begin{align}
    \n{\varepsilon \nabla \ael}_{ L^{\infty}(0,T;\fB_{p,1}^{\frac{2}{p}-1}) }\leq \frac{1}{4C_1}
\end{align}
for all $0<\varepsilon\leq\varepsilon_1$.
In the following of this proof, we assume $0<\varepsilon\leq\varepsilon_1$.
Then, \eqref{I1} is rewritten as 
\begin{align}
    \mathcal{I}_{\varepsilon}^{(1)}
    \leq{}& 
    \frac{1}{2} 
    \int_0^T
    \n{\sp{\Ae,\varepsilon\nabla\Ae,\Ve}(t)}_{\fB_{p,1}^{\frac{2}{p}+1}}
    dt\\
    &
    +
    C_1
    \int_0^T
    \n{\sp{\Ae,\varepsilon\nabla\Ae,\Ve}(t)}_{\fB_{p,1}^{\frac{2}{p}-1}}
    \sum_{r=1,\frac{4}{3},2,4}\n{\sp{\ael,\varepsilon \nabla \ael ,\vel,w}(t)}_{\fB_{p,1}^{\frac{2}{p}-1+\frac{2}{r}}}^r
    dt.
\end{align}

For the estimate of $\mathcal{I}_{\varepsilon}^{(2)}$, we have
\begin{align}
    &
    \begin{aligned}
        \n{\div(\Ae\Ve)}_{L^1(0,T;\fB_{p,1}^{\frac{2}{p}-1})}
        \leq
        C
        \n{\Ae}_{L^2(0,T;\fB_{p,1}^{\frac{2}{p}})}
        \n{\Ve}_{L^2(0,T;\fB_{p,1}^{\frac{2}{p}})},
    \end{aligned}
    \\
    &
    \begin{aligned}
        \n{\varepsilon \nabla \div(\Ae\Ve)}_{L^1(0,T;\fB_{p,1}^{\frac{2}{p}-1})}
        \leq{}&
        C
        \n{\varepsilon \nabla \Ae}_{L^{\infty}(0,T;\fB_{p,1}^{\frac{2}{p}-1})}
        \n{\Ve}_{L^1(0,T;\fB_{p,1}^{\frac{2}{p}+1})}\\
        &
        +
        C
        \n{\varepsilon \nabla \Ae}_{L^2(0,T;\fB_{p,1}^{\frac{2}{p}})}
        \n{\Ve}_{L^2(0,T;\fB_{p,1}^{\frac{2}{p}})},
    \end{aligned}
    \\
    &
    \begin{aligned}
        \n{(\Ve \cdot \nabla)\Ve}_{L^1(0,T;\fB_{p,1}^{\frac{2}{p}-1})}
        \leq{}&
        C
        \n{\Ve}_{L^2(0,T;\fB_{p,1}^{\frac{2}{p}})}^2,
    \end{aligned}
    \\
    &
    \begin{aligned}
        \n{(\mathcal{J}(\varepsilon\ae)-\mathcal{J}(\varepsilon\ael))\mathcal{L}\Ve}_{L^1(0,T;\fB_{p,1}^{\frac{2}{p}-1})}
        \leq{}&
        C
        \n{\varepsilon \nabla \Ae}_{L^{\infty}(0,T;\fB_{p,1}^{\frac{2}{p}-1})}
        \n{\Ve}_{L^1(0,T;\fB_{p,1}^{\frac{2}{p}+1})}\\
        &
        +
        C
        \n{\varepsilon \nabla \Ae}_{L^2(0,T;\fB_{p,1}^{\frac{2}{p}})}
        \n{\Ve}_{L^2(0,T;\fB_{p,1}^{\frac{2}{p}})},
    \end{aligned}
    \\
    &
    \begin{aligned}
        \n{\frac{1}{\varepsilon}(\mathcal{K}(\varepsilon \ae)-\mathcal{K}(\varepsilon \ael))\nabla\Ae}_{L^1(0,T;\fB_{p,1}^{\frac{2}{p}-1})}
        \leq{}&
        C
        \n{\Ae}_{L^2(0,T;\fB_{p,1}^{\frac{2}{p}})}^2.
    \end{aligned}
\end{align}
Combining the above estimates, we see that 
\begin{align}
    \mathcal{I}_{\varepsilon}^{(2)}
    \leq
    C
    \sp{
    \n{\sp{\Ae,\varepsilon \nabla \Ae,\Ve}}_{\widetilde{L^{\infty}}(0,T;\fB_{p,1}^{\frac{2}{p}-1}) \cap L^1(0,T;\fB_{p,1}^{\frac{2}{p}+1})}
    }^2.
\end{align}

Hence, we obtain 
\begin{align}
    &
    \n{\sp{\Ae,\varepsilon \nabla \Ae,\Ve}}_{\widetilde{L^{\infty}}(0,T;\fB_{p,1}^{\frac{2}{p}-1}) \cap L^1(0,T;\fB_{p,1}^{\frac{2}{p}+1})}
    \leq{}
    C_2A[a_0,v_0]
    \n{\sp{\vel,w}}_{L^1(0,T;\fB_{p,1}^{\frac{2}{p}+1})}^{h;\frac{1}{\varepsilon}}\\
    &\qquad 
    +
    C_2A[a_0,v_0]
    {\n{\sp{\ael,\vel}}
    _{
    L^4(0,\infty;\dB_{\theta,1}^{\frac{2}{\theta}-\frac{1}{2}})
    \cap 
    L^{\frac{4}{3}}(0,\infty;\dB_{\theta,1}^{\frac{2}{\theta}+\frac{1}{2}})}}\\
    &\qquad
    +
    C_2
    \int_0^T
    \n{\sp{\Ae,\varepsilon\nabla\Ae,\Ve}(t)}_{\fB_{p,1}^{\frac{2}{p}-1}}
    \sum_{r=1,\frac{4}{3},2,4}\n{\sp{\ael,\varepsilon \nabla  \ael ,\vel,w}(t)}_{\fB_{p,1}^{\frac{2}{p}-1+\frac{2}{r}}}^r
    dt\\
    &\qquad
    +
    C_2\sp{
    \n{\sp{\Ae,\varepsilon \nabla \Ae,\Ve}}_{\widetilde{L^{\infty}}(0,T;\fB_{p,1}^{\frac{2}{p}-1}) \cap L^1(0,T;\fB_{p,1}^{\frac{2}{p}+1})}
    }^2.
\end{align}
Let $\delta_0:=1/(2C_2)$.
Then, we see that 
\begin{align}
    &
    \n{\sp{\Ae,\varepsilon \nabla \Ae,\Ve}}_{\widetilde{L^{\infty}}(0,T;\fB_{p,1}^{\frac{2}{p}-1}) \cap L^1(0,T;\fB_{p,1}^{\frac{2}{p}+1})}
    \leq{}
    2C_2A[a_0,v_0]
    \n{\sp{\vel,w}}_{L^1(0,T;\fB_{p,1}^{\frac{2}{p}+1})}^{h;\frac{1}{\varepsilon}}\\
    &\qquad
    +
    2C_2A[a_0,v_0]
    {\n{\sp{\ael,\vel}}
    _{
    L^4(0,\infty;\dB_{\theta,1}^{\frac{2}{\theta}-\frac{1}{2}})
    \cap 
    L^{\frac{4}{3}}(0,\infty;\dB_{\theta,1}^{\frac{2}{\theta}+\frac{1}{2}})}}\\
    &\qquad
    +
    2C_2
    \int_0^T
    \n{\sp{\Ae,\varepsilon\nabla\Ae,\Ve}(t)}_{\fB_{p,1}^{\frac{2}{p}-1}}
    \sum_{r=1,\frac{4}{3},2,4}\n{\sp{\ael,\varepsilon \nabla  \ael ,\vel,w}(t)}_{\fB_{p,1}^{\frac{2}{p}-1+\frac{2}{r}}}^r
    dt.
\end{align}
It follows from the Gronwall lemma that
\begin{align}
    &
    \n{\sp{\Ae,\varepsilon \nabla \Ae,\Ve}}_{\widetilde{L^{\infty}}(0,T;\fB_{p,1}^{\frac{2}{p}-1}) \cap L^1(0,T;\dB_{p,1}^{\frac{2}{p}+1})}\\
    &\leq{}
    CA[a_0,v_0]
    \sp{
    \n{\sp{\vel,w}}_{L^1(0,T;\fB_{p,1}^{\frac{2}{p}+1})}^{h;\frac{1}{\varepsilon}}
    + 
    {\n{\sp{\ael,\vel}}
    _{
    L^4(0,\infty;\dB_{\theta,1}^{\frac{2}{\theta}-\frac{1}{2}})
    \cap 
    L^{\frac{4}{3}}(0,\infty;\dB_{\theta,1}^{\frac{2}{\theta}+\frac{1}{2}})}}
    }\\
    &\quad
    \times 
    \exp 
    \lp{
    C  
    \sum_{r=1,\frac{4}{3},2,4}\n{\sp{\ael,\varepsilon \nabla  \ael ,\vel,w}}_{L^r(0,\infty;\fB_{p,1}^{\frac{2}{p}-1+\frac{2}{r}})}^r
    }.
\end{align}
Thus, it follows from \eqref{A} that 
\begin{align}\label{AV}
    &
    \n{\sp{\Ae,\varepsilon \nabla \Ae,\Ve}}_{\widetilde{L^{\infty}}(0,T;\fB_{p,1}^{\frac{2}{p}-1}) \cap L^1(0,T;\fB_{p,1}^{\frac{2}{p}+1})}\\
    &\quad
    \leq{}
    C_3A[a_0,v_0]
    e^{C_3A[a_0,v_0]}
    \n{\sp{\vel,w}}_{L^1(0,\infty;\fB_{p,1}^{\frac{2}{p}+1})}^{h;\frac{1}{\varepsilon}}\\
    &\qquad 
    +
    C_3A[a_0,v_0]
    e^{C_3A[a_0,v_0]}
    {\n{\sp{\ael,\vel}}
    _{
    L^4(0,\infty;\dB_{\theta,1}^{\frac{2}{\theta}-\frac{1}{2}})
    \cap 
    L^{\frac{4}{3}}(0,\infty;\dB_{\theta,1}^{\frac{2}{\theta}+\frac{1}{2}})}}.
\end{align}
for some positive constant $C_3$.
Using Lemma \ref{lemm:lin-0-2}, the embeddings $ \widetilde{L^4}(0,\infty;\dB_{\theta,1}^{\frac{2}{\theta}-\frac{1}{2}})  \hookrightarrow L^4(0,\infty;\dB_{\theta,1}^{\frac{2}{\theta}-\frac{1}{2}})$, 
$\widetilde{  L^{\frac{4}{3}} }(0,\infty;\dB_{\theta,1}^{\frac{2}{\theta}+\frac{1}{2}}) \hookrightarrow L^{\frac{4}{3}}(0,\infty;\dB_{\theta,1}^{\frac{2}{\theta}+\frac{1}{2}}) $ and 
\begin{align}\label{vel-high}
    &
    \n{\sp{\vel,w}}_{L^1(0, \infty ;\fB_{p,1}^{\frac{2}{p}+1})}^{h;\frac{1}{\varepsilon}}\\
    &\quad\leq{}
    \n{{w}}_{L^1(0, \infty ;\fB_{p,1}^{\frac{2}{p}+1})}^{h;\frac{1}{\varepsilon}}
    +
    \n{\sp{a_0,\varepsilon\nabla a_0,v_0}}_{\fB_{p,1}^{\frac{2}{p}-1}}^{h;\frac{1}{\varepsilon}}
    +
    \n{(w\cdot \nabla)w}_{L^1(0,\infty;\fB_{p,1}^{\frac{2}{p}-1})}^{h;\frac{1}{\varepsilon}}\\
    &\quad\to{} 0 \qquad {\rm as }\quad \varepsilon \downarrow 0,
\end{align}
we may choose $0<\varepsilon_2\leq\varepsilon_1$ such that 
\begin{align}
    &
    C_3A[a_0,v_0]
    e^{C_3A[a_0,v_0]}
    \n{\sp{\vel,w}}_{L^1(0,\infty;\fB_{p,1}^{\frac{2}{p}+1})}^{h;\frac{1}{\varepsilon}}\\
    &\quad 
    +
    C_3A[a_0,v_0]
    e^{C_3A[a_0,v_0]}
    {\n{\sp{\ael,\vel}}
    _{
    L^4(0,\infty;\dB_{\theta,1}^{\frac{2}{\theta}-\frac{1}{2}})
    \cap 
    L^{\frac{4}{3}}(0,\infty;\dB_{\theta,1}^{\frac{2}{\theta}+\frac{1}{2}})    }  }
    \leq
    \frac{\delta_0}{2}
\end{align}
for all $0<\varepsilon \leq \varepsilon_2$.
In the following, we assume $0<\varepsilon \leq \varepsilon_2$.
Then, since  
\begin{align}
    \n{\sp{\Ae,\varepsilon \nabla \Ae,\Ve}}_{\widetilde{L^{\infty}}(0,T;\fB_{p,1}^{\frac{2}{p}-1}) \cap L^1(0,T;\fB_{p,1}^{\frac{2}{p}+1})} \leq \frac{\delta_0}{2}
\end{align}
for all $0<T<T_{\varepsilon}^*$, we meet a contradiction to the definition of $T_{\varepsilon}^*$ due to the assumption that $T_{\varepsilon}^*<\infty$.
Hence, we obtain $T_{\varepsilon}^{\rm max} \geq T_{\varepsilon}^*=\infty$ and complete the proof of the global well-posedness.

Finally, we consider the low Mach number limit.
Let $p<q\leq \infty$ and $1<r<\infty$.
Then, we see
by $\ae = \Ae+\ael$, $\mathbb{Q}\ve=\mathbb{Q}\Ve+\vel$, and $\mathbb{P}\ve-w=\mathbb{P}\Ve$
that  
\begin{align}
    & 
    \n{\sp{\ae,\varepsilon \nabla \ae,\mathbb{Q}\ve}}_{\widetilde{L^r}(0,\infty;\dB_{ q,1}^{\frac{2}{q}-1+\frac{2}{r}})}
    +
    \n{\mathbb{P}\ve-w}_{\widetilde{L^{\infty}}(0,\infty;\fB_{p,1}^{\frac{2}{p}-1}) \cap L^1(0,\infty;\fB_{p,1}^{\frac{2}{p}+ 1})}  
    \\
    &\quad
    \leq
    \n{\sp{\ael,\varepsilon \nabla \ael, 
           \vel}}_{\widetilde{L^r}(0,\infty;\dB_{q,1}^{\frac{2}{q}-1+\frac{2}{r}})}\\
    &\qquad+
    \n{\sp{\Ae,\varepsilon \nabla \Ae,\mathbb{Q}\Ve}}_{\widetilde{L^r}(0,\infty;\dB_{q,1}^{\frac{2}{q}-1+\frac{2}{r}})}
    +
    \n{\mathbb{P}\Ve}_{\widetilde{L^{\infty}}(0,\infty;\fB_{p,1}^{\frac{2}{p}-1}) \cap L^1(0,\infty;\fB_{p,1}^{\frac{2}{p}+1})}
    \\
    &\quad
    \leq
    \n{\sp{\ael,\varepsilon \nabla \ael, 
      \vel}}_{\widetilde{L^r}(0,\infty;\dB_{q,1}^{\frac{2}{q}-1+\frac{2}{r}})}
    +
    C
    \n{\sp{\Ae,\varepsilon \nabla  \Ae,\Ve}}_{\widetilde{L^{\infty}}(0,\infty;\fB_{p,1}^{\frac{2}{p}-1}) \cap L^1(0,\infty;\fB_{p,1}^{\frac{2}{p}+1})}\\
    &\quad
    \leq
    \n{\sp{\ael,\varepsilon \nabla \ael, \vel}}_{\widetilde{L^r}(0,\infty;\dB_{q,1}^{\frac{2}{q}-1+\frac{2}{r}})}
    +
    CA[a_0,v_0]
    e^{CA[a_0,v_0]}
\n{\sp{\vel,w}}_{L^1(0,\infty;\fB_{p,1}^{\frac{2}{p}+1})}^{h;\frac{1}{\varepsilon}}\\
    &\qquad 
    +
    CA[a_0,v_0]
    e^{CA[a_0,v_0]}
    {\n{\sp{\ael,\vel}}
    _{
    L^4(0,\infty;\dB_{\theta,1}^{\frac{2}{\theta}-\frac{1}{2}})
    \cap 
    L^{\frac{4}{3}}(0,\infty;\dB_{\theta,1}^{\frac{2}{\theta}+\frac{1}{2}})}}\\
    &\quad
    \to 0\qquad{\rm as}\quad \varepsilon \downarrow 0,
\end{align}
which is implied by Lemma \ref{lemm:lin-0-2}, \eqref{AV} and \eqref{vel-high}.
Hence, we complete the proof.  
\end{proof}

\vspace{10mm}
{\bf{Acknowledgements}} 
M. Fujii was supported by Grant-in-Aid for Research Activity Start-up, Grant Number JP23K19011.
Y. Li is partially supported by National Natural Science Foundation of China under grant number 12001003; he sincerely thanks Professor Yongzhong Sun for patient guidance and encouragement.

\vspace{4mm}
{\bf{Data Availability}} Data sharing is not applicable to this article as no datasets were generated or analyzed
during the current study.
\vspace{4mm}

{\bf{Conflicts of interest}} All authors certify that there are no conflicts of interest for this work.

\appendix
\def\thesection{\Alph{section}}
\section{Proof of Proposition \ref{prop:lim}}\label{sec:a}
The aim of this section is to prove Proposition \ref{prop:lim}.
To begin with, we the following two propositions. 
\begin{lemm}\label{lemm:lim-loc-1}
    Let $1\leq p \leq \infty$.
    Then, there exists a positive constant $\delta_0=\delta_0(p,\sigma)$ such that if $\n{w_0}_{\fB_{p,\sigma}^{\frac{2}{p}-1}} \leq \delta_0$, then the equations \eqref{eq:lim-2} possess a solution 
    \begin{align}
        w \in \widetilde{C}([0,\infty);\fB_{p,\sigma}^{\frac{2}{p}-1}(\mathbb{R}^2)) \cap \widetilde{L^1}(0,\infty;\fB_{p,\sigma}^{\frac{2}{p}+1}(\mathbb{R}^2)).
    \end{align}
    Moreover, there exists a positive constant $C=C(p,\sigma)$ such that  
    \begin{align}
        \n{w}_{\widetilde{L^{\infty}}(0,\infty;\fB_{p,\sigma}^{\frac{2}{p}-1}) \cap \widetilde{L^1}(0,\infty;\fB_{p,\sigma}^{\frac{2}{p}+1})}
        \leq
        C
        \n{w_0}_{\fB_{p,\sigma}^{\frac{2}{p}-1}}.
    \end{align}
\end{lemm}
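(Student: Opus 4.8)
The plan is to solve \eqref{eq:lim-2} by a contraction mapping argument applied to its Duhamel (mild) formulation in the scaling-critical space
\[
X := \widetilde{L^{\infty}}(0,\infty;\fB_{p,\sigma}^{\frac{2}{p}-1}(\R^2)) \cap \widetilde{L^1}(0,\infty;\fB_{p,\sigma}^{\frac{2}{p}+1}(\R^2)).
\]
Since $\div w = 0$, I rewrite the nonlinearity as $\mathbb{P}(w\cdot\nabla)w = \mathbb{P}\div(w\otimes w)$ and set
\[
\Phi[w](t) := e^{t\mu\Delta}w_0 - \int_0^t e^{(t-\tau)\mu\Delta}\,\mathbb{P}\div(w\otimes w)(\tau)\,d\tau.
\]
First I would show that $\Phi$ maps a small closed ball of $X$ into itself and is a strict contraction there; the fixed point is then the desired global solution, and its membership in $\widetilde{C}([0,\infty);\fB_{p,\sigma}^{\frac{2}{p}-1})$ follows from the continuity of each Littlewood--Paley block together with dominated convergence over the dyadic sum.

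For the linear part I would apply the heat maximal regularity estimate of Lemma \ref{lemm:max-heat} with $f=0$, $s=\frac{2}{p}-1$ and $r\in\{1,\infty\}$, which immediately yields $\n{e^{t\mu\Delta}w_0}_X \leq C\n{w_0}_{\fB_{p,\sigma}^{\frac{2}{p}-1}}$ and hence the asserted linear bound. For the bilinear part I would use that $\mathbb{P}$ and the components of $\div$ are degree-zero Fourier multipliers, so $\mathbb{P}\div$ acts boundedly between $\fB_{p,\sigma}^{s}$ and $\fB_{p,\sigma}^{s-1}$; applying Lemma \ref{lemm:max-heat} once more with $r_1=1$, $s=\frac{2}{p}-1$, and $r\in\{1,\infty\}$ reduces everything to the product estimate
\[
\n{\mathbb{P}\div(u\otimes v)}_{\widetilde{L^1}(0,\infty;\fB_{p,\sigma}^{\frac{2}{p}-1})} \leq C\,\n{u\otimes v}_{\widetilde{L^1}(0,\infty;\fB_{p,\sigma}^{\frac{2}{p}})}.
\]

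To close this I would invoke Lemma \ref{lemm-fg-general} in the form of the remark \eqref{prod-est-B} (licit since $s=\frac{2}{p}>0$), together with the interpolation embedding $X \hookrightarrow \widetilde{L^2}(0,\infty;\fB_{p,\sigma}^{\frac{2}{p}})$ and Hölder in time, to obtain
\[
\n{u\otimes v}_{\widetilde{L^1}(0,\infty;\fB_{p,\sigma}^{\frac{2}{p}})} \leq C\,\n{u}_{\widetilde{L^2}(0,\infty;\fB_{p,\sigma}^{\frac{2}{p}})}\n{v}_{\widetilde{L^2}(0,\infty;\fB_{p,\sigma}^{\frac{2}{p}})} \leq C\,\n{u}_X\n{v}_X,
\]
where the $\widehat{L^{\infty}}$ factor produced by \eqref{prod-est-B} is absorbed through the critical embedding $\fB_{p,1}^{\frac{2}{p}}\hookrightarrow\widehat{L^{\infty}}$. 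Polarizing gives the Lipschitz bound $\n{\Phi[u]-\Phi[v]}_X \leq C(\n{u}_X+\n{v}_X)\n{u-v}_X$.

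Combining the two estimates yields $\n{\Phi[w]}_X \leq C_0\n{w_0}_{\fB_{p,\sigma}^{\frac{2}{p}-1}} + C_0\n{w}_X^2$, so choosing $\delta_0=\delta_0(p,\sigma)$ small relative to $C_0$ makes $\Phi$ both a self-map and a contraction on the ball of radius $2C_0\n{w_0}_{\fB_{p,\sigma}^{\frac{2}{p}-1}}$; the Banach fixed point theorem then produces a unique $w\in X$ with $\n{w}_X \leq C\n{w_0}_{\fB_{p,\sigma}^{\frac{2}{p}-1}}$, which is exactly the quantitative bound in the statement. I expect the main obstacle to be the critical bilinear estimate: at the endpoint regularity the $\ell^{\sigma}$-summation forces the low-order ($\widehat{L^{\infty}}$) factor to be measured in the $\ell^1$-summed space $\fB_{p,1}^{\frac{2}{p}}$, so one must carefully distribute the summation indices between the two factors. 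This is precisely where the Chemin--Lerner time integration and the embedding into $\widehat{L^{\infty}}$ are used, and where the constraint on $\sigma$ (and the two-dimensional restriction $s+\min\{2/p_1,2/p'\}>0$ in Lemma \ref{lemm-fg-general}) enters.
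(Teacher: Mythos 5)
Your proposal is correct and follows essentially the same route as the paper: the Duhamel formulation, the heat maximal regularity of Lemma \ref{lemm:max-heat}, and the product estimate \eqref{prod-est-B} combined with the embedding $\fB_{p,1}^{\frac{2}{p}}\hookrightarrow\widehat{L^{\infty}}$ and H\"older in time to bound $\n{u\otimes v}_{\widetilde{L^1}(0,\infty;\fB_{p,1}^{\frac{2}{p}})}$ by $\n{u}_{\widetilde{L^2}(0,\infty;\fB_{p,1}^{\frac{2}{p}})}\n{v}_{\widetilde{L^2}(0,\infty;\fB_{p,1}^{\frac{2}{p}})}$, closing the contraction for small data. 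The only cosmetic difference is that the paper runs the fixed point on a ball of the solution space equipped with the weaker $\widetilde{L^2}(0,T;\fB_{p,1}^{\frac{2}{p}})$ metric, whereas you contract directly in $X$; both yield the stated bound.
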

\begin{lemm}\label{lemm:lim-loc-2}
    Let $1\leq p \leq \infty$.
    Then, for any divergence free vector field $w_0 \in \fB_{p,1}^{\frac{2}{p}-1}(\mathbb{R}^2)$ and divergence free vector field $v \in \widetilde{L^{\infty}}(0,\infty;\fB_{p,1}^{\frac{2}{p}-1}(\mathbb{R}^2)) \cap \widetilde{L^1}(0,\infty;\fB_{p,1}^{\frac{2}{p}+1}(\mathbb{R}^2))$, there exists a positive time $T_0=T_0(p,1,w_0,v)$ such that there exists a local solution 
    \begin{align}
        w \in \widetilde{C}([0,T_0);\fB_{p,1}^{\frac{2}{p}-1}(\mathbb{R}^2)) \cap \widetilde{L^1}(0,T_0;\fB_{p,1}^{\frac{2}{p}+1}(\mathbb{R}^2))
    \end{align} 
    to the equations
    \begin{align}\label{w-perturb}
        \begin{cases}
            \partial_t w - \mu \Delta w + \mathbb{P}(v \cdot \nabla)w + \mathbb{P}(w \cdot \nabla)v + \mathbb{P}(w \cdot \nabla)w = 0,\\
            \div w = 0,\\
            w(0,x) = w_0(x).
        \end{cases}
    \end{align}
    Moreover, if we additionally assume $2 \leq p \leq 4$ and $w_0 \in L^2(\mathbb{R}^2)$, then it holds 
    \begin{align}
        w \in C([0,T_0);L^2(\mathbb{R}^2)) \cap L^2(0,T_0;\dH^1(\mathbb{R}^2)).
    \end{align}
\end{lemm}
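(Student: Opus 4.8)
The plan is to solve \eqref{w-perturb} by a contraction mapping argument in the Chemin--Lerner space
\[
E_T := \widetilde{L^{\infty}}(0,T;\fB_{p,1}^{\frac{2}{p}-1}(\R^2)) \cap \widetilde{L^1}(0,T;\fB_{p,1}^{\frac{2}{p}+1}(\R^2)),
\]
extracting the required smallness from the shortness of the time interval rather than from the size of the data. Writing \eqref{w-perturb} in Duhamel form and invoking the heat maximal regularity estimate of Lemma~\ref{lemm:max-heat} (with $r_1=1$ and $r\in\{1,\infty\}$), it suffices to control the quadratic and coupling terms in $\widetilde{L^1}(0,T;\fB_{p,1}^{\frac{2}{p}-1})$. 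Since $\div v=\div w=0$, I would rewrite each bilinear term in divergence form, e.g. $(w\cdot\nabla)w=\div(w\otimes w)$ and $(v\cdot\nabla)w+(w\cdot\nabla)v=\div(v\otimes w+w\otimes v)$, so that one derivative is absorbed by the gain in regularity and the remaining product is estimated at the \emph{positive} index $\frac{2}{p}$. Using the product estimate \eqref{prod-est-B} together with the embedding $\fB_{p,1}^{\frac{2}{p}}\hookrightarrow\widehat{L^{\infty}}$ and the interpolation bound $\n{f}_{\widetilde{L^2}(0,T;\fB_{p,1}^{2/p})}\leq \n{f}_{\widetilde{L^\infty}(0,T;\fB_{p,1}^{2/p-1})}^{1/2}\n{f}_{\widetilde{L^1}(0,T;\fB_{p,1}^{2/p+1})}^{1/2}$, one obtains bounds of the schematic form $\n{\div(f\otimes g)}_{\widetilde{L^1}(0,T;\fB_{p,1}^{2/p-1})}\leq C\n{f}_{\widetilde{L^2}(0,T;\fB_{p,1}^{2/p})}\n{g}_{\widetilde{L^2}(0,T;\fB_{p,1}^{2/p})}$. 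Crucially, since $s=\frac{2}{p}>0$ for $p<\infty$ and the endpoint $(s,p,q)=(0,\infty,1)$ is admissible in \eqref{prod-est-B}, this argument covers the full range $1\leq p\leq\infty$ asserted in the statement.

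For the fixed point I would not work with $w$ directly (its free evolution is not small for large data) but split $w=w_L+z$ with $w_L:=e^{t\mu\Delta}w_0$ and $z(0)=0$. The decisive observation is that the three ``free'' quantities
\[
\n{w_L}_{\widetilde{L^2}(0,T;\fB_{p,1}^{2/p})},\quad \n{w_L}_{\widetilde{L^1}(0,T;\fB_{p,1}^{2/p+1})},\quad \n{v}_{\widetilde{L^2}(0,T;\fB_{p,1}^{2/p})}
\]
all tend to $0$ as $T\downarrow0$; this follows from dominated convergence applied to the $\ell^1$ sum over the Littlewood--Paley blocks, using $w_0\in\fB_{p,1}^{2/p-1}$ and $v\in\widetilde{L^1}(0,\infty;\fB_{p,1}^{2/p+1})$. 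Substituting $w=w_L+z$ into the Duhamel map defines an operator $z\mapsto\Psi(z)$, and the estimates of the previous paragraph give $\n{\Psi(z)}_{E_T}\leq C(\beta(T)+\gamma(T)\n{z}_{E_T}+\n{z}_{E_T}^2)$, where $\beta(T),\gamma(T)\to0$ collect the terms carrying at least one free factor. Choosing $T$ small so that $C\gamma(T)\leq\tfrac14$ and then fixing the ball radius $R\sim\beta(T)$ makes $\Psi$ a self-map and a contraction on the closed ball of radius $R$ in $E_T$; the unique fixed point yields $w=w_L+z$ in the asserted class, with $T_0=T_0(p,w_0,v)$. I expect this step---genuinely producing time-smallness in Chemin--Lerner norms at the \emph{scaling critical} index $\frac{2}{p}-1$ while respecting the divergence structure---to be the main obstacle.

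For the final $L^2$ claim under $2\leq p\leq4$ and $w_0\in L^2$, I would run the standard energy estimate on the already-constructed $w$, justified by its regularity $w\in\widetilde{L^1}(0,T_0;\fB_{p,1}^{2/p+1})\cap\widetilde{L^\infty}(0,T_0;\fB_{p,1}^{2/p-1})$. Testing \eqref{w-perturb} against $w$ and using $\mathbb{P}w=w$, $\div v=\div w=0$, the transport terms vanish: $\int_{\R^2}(v\cdot\nabla)w\cdot w\,dx=\int_{\R^2}(w\cdot\nabla)w\cdot w\,dx=0$. For the remaining term I would integrate by parts to $\int_{\R^2}(w\cdot\nabla)v\cdot w\,dx=-\int_{\R^2}(w\cdot\nabla)w\cdot v\,dx$ and bound it by $\n{v}_{L^\infty}\n{w}_{L^2}\n{\nabla w}_{L^2}$, absorbing $\n{\nabla w}_{L^2}$ via Young's inequality into the viscous term. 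Since $v\in\widetilde{L^2}(0,T_0;\fB_{p,1}^{2/p})\hookrightarrow L^2(0,T_0;\widehat{L^{\infty}})\hookrightarrow L^2(0,T_0;L^\infty)$, the prefactor $\n{v}_{L^\infty}^2$ is integrable in time, so Gronwall's inequality gives $w\in L^\infty(0,T_0;L^2)$ and, after integrating the dissipation, $w\in L^2(0,T_0;\dH^1)$; the continuity $w\in C([0,T_0);L^2)$ then follows from the equation by a standard argument. A clean way to make the energy identity rigorous is to first derive it for the Picard iterates (or a mollified problem) and pass to the limit.
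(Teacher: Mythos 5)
Your proposal is correct. For the existence part it is essentially the paper's argument: the paper also runs a contraction in the time-localized critical framework, working on the ball $\{\n{u}_{\widetilde{L^2}(0,T;\fB_{p,1}^{2/p})}\leq\delta\}$ with the bilinear bound $\n{u^1\otimes u^2}_{L^1(0,T;\fB_{p,1}^{2/p})}\leq C\n{u^1}_{\widetilde{L^2}(0,T;\fB_{p,1}^{2/p})}\n{u^2}_{\widetilde{L^2}(0,T;\fB_{p,1}^{2/p})}$ and extracting smallness from $T\downarrow 0$ exactly as you do; your splitting $w=w_L+z$ versus their small-$\widetilde{L^2}_t$-ball formulation is only a cosmetic difference, and your dominated-convergence justification of the vanishing of the free quantities is the right one.

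Where you genuinely diverge is the $L^2$-regularity claim. The paper does \emph{not} use the energy method there: it applies the heat maximal regularity of Lemma \ref{lemm:max-heat} in $\dB_{2,2}^s=\dH^s$ and controls $v\otimes w$, $w\otimes v$, $w\otimes w$ in $\widetilde{L^1}(0,T_0;\dB_{2,2}^1)$ via the product estimate \eqref{est-f-grad-g} and the embedding $\fB_{p,1}^{s}\hookrightarrow\dB_{p,1}^{s}$; this is precisely where the restriction $2\leq p\leq 4$ enters for them. Your route — testing against $w$, using the cancellations $\int(v\cdot\nabla)w\cdot w=\int(w\cdot\nabla)w\cdot w=0$, moving the derivative in $\int(w\cdot\nabla)v\cdot w$, absorbing $\n{\nabla w}_{L^2}$ by Young, and closing with Gronwall using $v\in L^2(0,T_0;\widehat{L^{\infty}})\hookrightarrow L^2(0,T_0;L^{\infty})$ — is more elementary and is in fact the same cancellation the authors exploit later in the proof of Proposition \ref{prop:lim} to get the \emph{global} $L^2$ bound for $w^{(1)}$; it has the additional virtue of not relying on the range restriction of \eqref{est-f-grad-g}, at the price of the approximation/mollification step you correctly flag as necessary to justify the energy identity at this low regularity. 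Both arguments are valid; the paper's buys the conclusion as a direct corollary of linear maximal regularity, yours buys a structure-exploiting bound that is closer to what the global argument ultimately needs.
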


\begin{proof}[Proofs of Lemmas \ref{lemm:lim-loc-1} and \ref{lemm:lim-loc-2}]
We provide the outline of the proofs of Lemmas \ref{lemm:lim-loc-1} and \ref{lemm:lim-loc-2} briefly.
Let us define a complete metric space
\begin{align}
    X^p(\delta,T)
    :={}&
    \Mp{ 
    \begin{aligned}
    u 
    \in{}& 
    \widetilde{C}([0,T);\fB_{p,1}^{\frac{2}{p}-1}(\mathbb{R}^2))\\ 
    &\cap L^1(0,T;\fB_{p,1}^{\frac{2}{p}+1}(\mathbb{R}^2))
    \end{aligned}
    \ ; \ 
    \n{u}_{\widetilde{L^2}(0,T;\fB_{p,1}^{\frac{2}{p}})}
    \leq \delta 
    },\\
    d_X(u_1,u_2):={}&
    \n{u^1-u^2}_{\widetilde{L^2}(0,T;\fB_{p,1}^{\frac{2}{p}})}.
\end{align}
Then, since it follows from Lemma \ref{lemm:max-heat} and the nonlinear estimate \eqref{prod-est-B} that 
\begin{align}
    &
    \n{\int_0^t e^{(t-\tau)\Delta} \mathbb{P} \div (u^1(\tau) \otimes u^2(\tau))d\tau}_{\widetilde{L^{\infty}}(0,T;\fB_{p,1}^{\frac{2}{p}-1}) \cap L^1(0,T;\fB_{p,1}^{\frac{2}{p}+1})}\\
    &\quad 
    \leq
    C
    \n{u^1\otimes u^2}_{L^1(0,T;\fB_{p,1}^{\frac{2}{p}})}\\
    &\quad 
    \leq
    C
    \n{u^1}_{L^2(0,T;\fB_{p,1}^{\frac{2}{p}})}
    \n{u^2}_{L^2(0,T;\fB_{p,1}^{\frac{2}{p}})},
\end{align}
the standard contraction mapping argument enables us to obtain the local well-posedness of \eqref{w-perturb} for large data or global well-posedness of \eqref{eq:lim-2} for small data in $\fB_{p,1}^{\frac{2}{p}-1}(\mathbb{R}^2)$. 

Finally, we show the $L^2$-regularity of the solution to \eqref{w-perturb} stated in Lemma \ref{lemm:lim-loc-2}.
By Lemma \ref{lemm:max-heat} via $\dB_{2,2}^s(\mathbb{R}^2) = \dot{H}^s(\mathbb{R}^2)$ ($s \in \mathbb{R}$), the solution $w$ to \eqref{w-perturb} satisfies
\begin{align}
    \sup_{0\leq t \leq T_0}\n{w(t)}_{L^2}
    +\n{w}_{L^2(0,T_0;\dot{H}^1)}
    \leq{}&
    C
    \n{w}_{\widetilde{L^{\infty}}(0,T_0;\dB_{2,2}^0) \cap \widetilde{L^2}(0,T_0;\dB_{2,2}^1)}\\
    \leq{}&
    C\n{w_0}_{L^2}
    +
    C\n{v \otimes w}_{\widetilde{L^1}(0,T_0;\dB_{2,2}^{0})}\\
    &
    +
    C\n{w \otimes v}_{\widetilde{L^1}(0,T_0;\dB_{2,2}^{0})}
    +
    C\n{w \otimes w}_{\widetilde{L^1}(0,T_0;\dB_{2,2}^{0})}.
\end{align}
Here, we see by \eqref{est-f-grad-g} that  
\begin{align} 
    &
    \n{v \otimes w}_{\widetilde{L^1}(0,T_0;\dB_{2,2}^{1})}
    +
    \n{w \otimes v}_{\widetilde{L^1}(0,T_0;\dB_{2,2}^{1})}
    +
    \n{w \otimes w}_{\widetilde{L^1}(0,T_0;\dB_{2,2}^{1})}\\
    &\quad 
    \leq
    C
    \n{v}_{L^4(0,T;\dB_{p,1}^{\frac{2}{p}-\frac{1}{2}})}
    \n{w}_{L^{\frac{4}{3}}(0,T;\dB_{p,1}^{\frac{2}{p}+\frac{1}{2}})}
    +
    C
    \n{w}_{L^{\frac{4}{3}}(0,T;\dB_{p,1}^{\frac{2}{p}+\frac{1}{2}})}
    \n{v}_{L^4(0,T;\dB_{p,1}^{\frac{2}{p}-\frac{1}{2}})}\\
    &\qquad
    +
    C
    \n{w}_{L^{\frac{4}{3}}(0,T;\dB_{p,1}^{\frac{2}{p}+\frac{1}{2}})}
    \n{w}_{L^4(0,T;\dB_{p,1}^{\frac{2}{p}-\frac{1}{2}})}\\
    &\quad 
    \leq
    C
    \n{v}_{L^4(0,T;\fB_{p,1}^{\frac{2}{p}-\frac{1}{2}})}
    \n{w}_{L^{\frac{4}{3}}(0,T;\fB_{p,1}^{\frac{2}{p}+\frac{1}{2}})}
    +
    C
    \n{w}_{L^{\frac{4}{3}}(0,T;\fB_{p,1}^{\frac{2}{p}+\frac{1}{2}})}
    \n{v}_{L^4(0,T;\fB_{p,1}^{\frac{2}{p}-\frac{1}{2}})}\\
    &\qquad
    +
    C
    \n{w}_{L^{\frac{4}{3}}(0,T;\fB_{p,1}^{\frac{2}{p}+\frac{1}{2}})}
    \n{w}_{L^4(0,T;\fB_{p,1}^{\frac{2}{p}-\frac{1}{2}})}\\
    &\quad
    < \infty,
\end{align}
which and $w_0 \in L^2(\mathbb{R}^2)$ implies 
\begin{align}
    \sup_{0\leq t \leq T_0}\n{w(t)}_{L^2}
    +\n{w}_{L^2(0,T_0;\dot{H}^1)}
    <\infty.
\end{align}
This completes the proof.
\end{proof}

With Lemmas \ref{lemm:lim-loc-1} and \ref{lemm:lim-loc-2} at hand, we are ready to give the proof of Proposition \ref{prop:lim}. 

\begin{proof}[Proof of Proposition \ref{prop:lim}]
Let $0 < \delta \leq \delta_0$ be a positive constant to be determined later.
Then, since $L^2(\mathbb{R}^2)$ is dense in $\fB_{p,1}^{\frac{2}{p}-1}(\mathbb{R}^2)$, there exists a $w_{0}^{(1)} \in L^2(\mathbb{R}^2)$ such that 
\begin{align}
    \n{w_0 - w_0^{(1)}}_{\fB_{p,1}^{\frac{2}{p}-1}}\leq\delta.
\end{align}
Choosing $\delta$ sufficiently small, it follows from Lemma \ref{lemm:lim-loc-1} that \eqref{eq:lim-2} with the initial data $w_0^{(2)}:=w_0 - w_0^{(1)}$ possesses a unique solution
\begin{align}
    w^{(2)} 
    \in 
    \widetilde{C}([0,\infty);\fB_{p,1}^{\frac{2}{p}-1}(\mathbb{R}^2)) \cap {L^1}(0,\infty;\fB_{p,1}^{\frac{2}{p}+1}(\mathbb{R}^2))
\end{align}
with the estimate
\begin{align}
    \n{w^{(2)} }_{\widetilde{L^{\infty}}(0,\infty;\fB_{p,1}^{\frac{2}{p}-1}) \cap {L^1}(0,\infty;\fB_{p,1}^{\frac{2}{p}+1})}
    \leq
    C
    \n{w_0^{(2)} }_{\fB_{p,1}^{\frac{2}{p}-1}}
    \leq
    C\delta.
\end{align}
Lemma \ref{lemm:lim-loc-2} ensures the existence of a positive time $T_0$ and the solution  
\begin{align}
    w^{(1)} 
    \in{}
    &
    \widetilde{C}([0,T_0);\fB_{p,1}^{\frac{2}{p}-1}(\mathbb{R}^2)) \cap {L^1}(0,T_0;  \fB_{p,1}^{\frac{2}{p}+1}(\mathbb{R}^2))\\
    &\cap 
    C([0,T_0);L^2(\mathbb{R}^2)) \cap L^2(0,T_0;\dH^1(\mathbb{R}^2)).
\end{align}
of the equations 
\begin{align}
    \begin{cases}
        \partial_t w^{(1)} - \mu \Delta w^{(1)} + \mathbb{P}(w^{(2)} \cdot \nabla)w^{(1)} + \mathbb{P}(w^{(1)} \cdot \nabla)w^{(2)} + \mathbb{P}(w^{(1)} \cdot \nabla)w^{(1)} = 0,\\
        \div w^{(1)} = 0,\\
        w^{(1)}(0,x) = w_0^{(1)}(x)
    \end{cases}
\end{align}
Then, it is easy to see that  
\begin{align}
    w:=w^{(1)} + w^{(2)}
    \in \widetilde{C}([0,T_0);\fB_{p,1}^{\frac{2}{p}-1}(\mathbb{R}^2)) \cap {L^1}(0,T_0;\fB_{p,1}^{\frac{2}{p}+1}(\mathbb{R}^2)).
\end{align}
solves \eqref{eq:lim-2} on the time interval $[0,T_0)$.
To complete the proof, it suffices to establish a global a priori estimate of $w$ on the time interval $[0,T)$, where $T$ is an arbitrary time not exceeding the maximal existence time.
By the energy method, we have 
\begin{align}
    \frac{1}{2}\frac{d}{dt}\n{w^{(1)}(t)}_{L^2}^2
    +
    \mu
    \n{\nabla w^{(1)}(t)}_{L^2}^2
    ={}&
    -\left\langle (w^{(1)}(t) \cdot \nabla)w^{(2)}(t), w^{(1)}(t) \right \rangle_{L^2}\\
    ={}&
    \left\langle (w^{(1)}(t) \cdot \nabla)w^{(1)}(t), w^{(2)}(t) \right \rangle_{L^2}\\
    \leq{}&
    \n{(w^{(1)}(t) \cdot \nabla)w^{(1)}(t)}_{ L^1       } 
    \n{w^{(2)}(t)}_{ L^{\infty}   }\\
    \leq{}& 
    C
    \n{w^{(1)}(t)}_{L^2}
    \n{\nabla w^{(1)}(t)}_{L^2}
    \n{w^{(2)}(t)}_{\fB_{p,1}^{\frac{2}{p}}}.
\end{align}
Thus, we see that 
\begin{align}
    &\frac{1}{2}\n{w^{(1)}(t)}_{L^2}^2
    +
    \mu
    \n{w^{(1)}}_{L^2(0,t;\dH^1)}^2\\
    &\quad 
    \leq
    \n{w_0^{(1)}}_{L^2}^2
    +
    C
    \n{w^{(1)}}_{L^{\infty}(0,t;L^2)}
    \n{w^{(1)}}_{L^2(0,t;\dH^1)}
    \n{w^{(2)}}_{L^2(0,t;\fB_{p,1}^{\frac{2}{p}})}\\
    &\quad 
    \leq
    \n{w_0^{(1)}}_{L^2}^2
    +
    C_1\delta
    \n{w^{(1)}}_{L^{\infty}(0,T;L^2)}
    \n{w^{(1)}}_{L^2(0,T;\dH^1)}.
\end{align}
for some positive constant $C_1$. 
We assume that $0<\delta \leq \sqrt{\mu}/(2C_1)$.
Then, we see that 
\begin{align}\label{ene:w^(1)-1}
    \n{w^{(1)}}_{L^{\infty}(0,T;L^2)}
    +
    \mu
    \n{\nabla w^{(1)}}_{L^2(0,T;\dH^1)}
    \leq
    C
    \n{w_0^{(1)}}_{L^2}.
\end{align}
Therefore, from the real interpolation
\begin{align}
    L^4(0,T;\dB_{2,1}^{\frac{1}{2}}(\mathbb{R}^2))
    =
    \sp{
    L^{\infty}(0,T;L^2(\mathbb{R}^2)),
    L^2(0,T;\dH^1(\mathbb{R}^2))
    }_{\frac{1}{2},1}
\end{align}
and the estimate \eqref{ene:w^(1)-1}, we have 
\begin{align}\label{ene:w^(1)-2}
    \int_0^T
    \n{w^{(1)}(t)}_{\dB_{2,1}^{\frac{1}{2}}}^4
    dt
    \leq 
    C\n{w_0^{(1)}}_{L^2}^4.
\end{align}
Now, we estimate $w$.
It follows from the maximal regularity estimates of the heat kernel that 
\begin{align}
    &
    \n{w}_{\widetilde{L^{\infty}}(0,T;\fB_{p,1}^{\frac{2}{p}-1})}
    +
    \int_0^T
    \n{w(t)}_{\fB_{p,1}^{\frac{2}{p}+1}}
    dt\\
    &\quad
    \leq
    C\n{w_0}_{\fB_{p,1}^{\frac{2}{p}-1}}
    +
    C\int_0^T
    \n{(w(t)\cdot\nabla) w(t)}_{\fB_{p,1}^{\frac{2}{p}-1}}
    dt.
\end{align}
For the estimate of the nonlinear term, we see that 
\begin{align}
    &\int_0^T
    \n{(w(t)\cdot\nabla) w(t)}_{\fB_{p,1}^{\frac{2}{p}-1}}
    dt\\
    &\quad 
    \leq{}
    C
    \int_0^T
    \n{w(t)\otimes w(t)}_{\fB_{p,1}^{\frac{2}{p}}}
    dt\\
    &\quad 
    \leq{}
    C
    \int_0^T
    \n{w(t)}_{\fB_{p,1}^{\frac{2}{p}-\frac{1}{2}}}
    \n{ w(t)}_{\fB_{p,1}^{\frac{2}{p}+\frac{1}{2}}}
    dt\\
    &\quad 
    \leq{}
    C
    \int_0^T
    \n{w(t)}_{\fB_{p,1}^{\frac{2}{p}-\frac{1}{2}}}
    \n{ w(t)}_{\fB_{p,1}^{\frac{2}{p}-1}}^{\frac{1}{4}}
    \n{ w(t)}_{\fB_{p,1}^{\frac{2}{p}+1}}^{\frac{3}{4}}
    dt\\
    &\quad 
    \leq{}
    C
    \int_0^T
    \n{w(t)}_{\fB_{p,1}^{\frac{2}{p}-\frac{1}{2}}}^4
    \n{ w}_{\widetilde{L^{\infty}}(0,t;\fB_{p,1}^{\frac{2}{p}-1})}
    dt
    +
    \frac{1}{2}
    \int_0^T
    \n{ w(t)}_{\fB_{p,1}^{\frac{2}{p}+1}}
    dt,
\end{align}
where we have used the interpolation inequality 
\begin{align}
    \n{ w(t) }_{\fB_{p,1}^{\frac{2}{p}+\frac{1}{2}}}
    \leq{}
    \n{ w(t) }_{\fB_{p,1}^{\frac{2}{p}-1}}^{\frac{1}{4}}
    \n{ w(t) }_{\fB_{p,1}^{\frac{2}{p}+1}}^{\frac{3}{4}}.
\end{align}
Thus, we obtain by Lemma \ref{lemm:max-heat} that
\begin{align}
    &
    \n{w}_{\widetilde{L^{\infty}}(0,T;\fB_{p,1}^{\frac{2}{p}-1})}
    +
    \int_0^T
    \n{w(t)}_{\fB_{p,1}^{\frac{2}{p}+1}}
    dt\\
    &\quad 
    \leq
    C\n{w_0}_{\fB_{p,1}^{\frac{2}{p}-1}}
    +
    C
    \int_0^T
    \n{w(t)}_{\fB_{p,1}^{\frac{2}{p}-\frac{1}{2}}}^4
    \n{ w}_{\widetilde{L^{\infty}}(0,t;\fB_{p,1}^{\frac{2}{p}-1})}
    dt,
\end{align}
which and the Gronwall inequality imply
\begin{align}
    \n{w}_{\widetilde{L^{\infty}}(0,T;\fB_{p,1}^{\frac{2}{p}-1})}
    +
    \int_0^T
    \n{w(t)}_{\fB_{p,1}^{\frac{2}{p}+1}}
    dt
    \leq
    C\n{w_0}_{\fB_{p,1}^{\frac{2}{p}-1}}
    \exp
    \lp{ 
    \int_0^T
    \n{w(t)}_{\fB_{p,1}^{\frac{2}{p}-\frac{1}{2}}}^4
    dt}.
\end{align}
Here, by \eqref{ene:w^(1)-2}, it holds
\begin{align}
    \int_0^T
    \n{w(t)}_{\fB_{p,1}^{\frac{2}{p}-\frac{1}{2}}}^4
    dt
    \leq{}&
    C
    \int_0^T
    \n{w^{(1)}(t)}_{\fB_{p,1}^{\frac{2}{p}-\frac{1}{2}}}^4
    dt
    +
    C
    \int_0^T
    \n{w^{(2)}(t)}_{\fB_{p,1}^{\frac{2}{p}-\frac{1}{2}}}^4
    dt\\
    \leq{}&
    C
    \int_0^T
    \n{w^{(1)}(t)}_{\fB_{2,1}^{\frac{1}{2}}}^4
    dt\\
    &
    +
    C
    \n{w^{(2)}}_{\widetilde{L^{\infty}}(0,\infty;\fB_{p,\sigma}^{\frac{2}{p}-1})}^3
    \n{w^{(2)}}_{\widetilde{L^1}(0,\infty;\fB_{p,\sigma}^{\frac{2}{p}+1})}\\
    \leq{}&
    C\n{w_0^{(1)}}_{L^2}^4
    +
    C\delta^4.
\end{align}
Hence, we obtain 
\begin{align}
    \n{w}_{\widetilde{L^{\infty}}(0,T;\fB_{p,1}^{\frac{2}{p}-1})}
    +
    \int_0^T
    \n{w(t)}_{\fB_{p,1}^{\frac{2}{p}+1}}
    dt
    \leq
    C\n{w_0}_{\fB_{p,1}^{\frac{2}{p}-1}}
    \exp
    \lp{ 
    C\n{w_0^{(1)}}_{L^2}^4
    +
    C\delta^4},
\end{align}
which completes the proof.
\end{proof}

\section{Proof of Proposition \ref{lemm:LWP-CNSK}}\label{sec:LWP}
We show Proposition \ref{lemm:LWP-CNSK}.
\begin{proof}[Proof of Proposition \ref{lemm:LWP-CNSK}]
Let $T>0$ and $0<\varepsilon\leq 1$.
By the Duhamel principle the system \eqref{eq:NSK-2} is rewritten as 
$(\ae,\ve)=\Phi^{\varepsilon}[\ae,\ve]$.
Here, we have defined the map $\Phi^{\varepsilon}$ by  
\begin{align}
    \Phi^{\varepsilon} 
    [a,v](t)
    :=
    \mathcal{G}^{\varepsilon}(t)
    \begin{pmatrix}
        a_0 \\ v_0
    \end{pmatrix}
    -
    \int_0^t
    \mathcal{G}^{\varepsilon}(t-\tau)
    \begin{pmatrix}
        \mathcal{A}[a(\tau),v(\tau)] \\ 
        \mathcal{V}_{\varepsilon}[a(\tau),v(\tau)]
    \end{pmatrix}
    d\tau
\end{align}
for $(a,v) \in Z_T^{\varepsilon}$, where
$\Mp{\mathcal{G}^{\varepsilon}(t)}_{t>0}$ is the semigroup corresponding to the linear problem
\begin{align}\label{eq:NSK-lin-G}
    \begin{dcases}
        \partial_t \ae + \dfrac{1}{\varepsilon} \div \ve = 0, \\
        \partial_t \ve - \mathcal{L} \ve
        -\kappa \Delta (\varepsilon \nabla \ae)
        +\frac{1}{\varepsilon}\nabla \ae
        = 
        0
    \end{dcases}
\end{align}
and function space $Z_T^{\varepsilon}$ is defined by
\begin{align}
    &
    Z_T^{\varepsilon}
    :=
    \Mp{
    (a,v)
    \ ;\ 
    \begin{aligned}
    &
    (a,\varepsilon \nabla a, v)
    \in 
    {}
    C([0,T];\fB_{p,1}^{\frac{2}{p}-1}(\mathbb{R}^2))
    \cap 
    L^1(0,T;\fB_{p,1}^{\frac{2}{p}+1}(\mathbb{R}^2))\\
    &
    \n{(a,v)}_{Z_T^{\varepsilon}}
    \leq
    \frac{1}{2C_0}
    \end{aligned}
    },\\
    &
    \begin{aligned}
    \n{(a,v)}_{Z_T^{\varepsilon}}
    :={}
    &
    \n{a}_{{L^2}(0,T;\fB_{p,1}^{\frac{2}{p}})}
    +
    \n{\varepsilon \nabla a}_{L^{\infty}(0,T;\fB_{p,1}^{\frac{2}{p}-1}) \cap {L^2}(0,T;\fB_{p,1}^{\frac{2}{p}})}\\
    &
    +
    \n{v}_{{L^2}(0,T;\fB_{p,1}^{\frac{2}{p}})\cap {L^1}(0,T;\fB_{p,1}^{\frac{2}{p}+1})},
    \end{aligned}
\end{align} 
with a positive constant $C_0:=\max\{C_1,2C_2\}$, where $C_1$ and $C_2$ are to be determined later.
It follows from Lemma \ref{lemm:lin-0-1} and 
\begin{align}
    &
    \begin{aligned}
        \n{\div(av)}_{L^1(0,T;\fB_{p,1}^{\frac{2}{p}-1})}
        \leq
        C
        \n{a}_{L^2(0,T;\fB_{p,1}^{\frac{2}{p}})}
        \n{v}_{L^2(0,T;\fB_{p,1}^{\frac{2}{p}})},
    \end{aligned}
    \\
    &
    \begin{aligned}
        \n{\varepsilon \nabla \div(av)}_{L^1(0,T;\fB_{p,1}^{\frac{2}{p}-1})}
        \leq{}&
        C
        \n{\varepsilon \nabla a}_{L^{\infty}(0,T;\fB_{p,1}^{\frac{2}{p}-1})}
        \n{v}_{L^1(0,T;\fB_{p,1}^{\frac{2}{p}+1})}\\
        &
        +
        C
        \n{\varepsilon \nabla a}_{L^2(0,T;\fB_{p,1}^{\frac{2}{p}})}
        \n{v}_{L^2(0,T;\fB_{p,1}^{\frac{2}{p}})},
    \end{aligned}
    \\
    &
    \begin{aligned}
        \n{(v \cdot \nabla)v}_{L^1(0,T;\fB_{p,1}^{\frac{2}{p}-1})}
        \leq{}&
        C
        \n{v}_{L^2(0,T;\fB_{p,1}^{\frac{2}{p}})}^2,
    \end{aligned}
    \\
    &
    \begin{aligned}
        \n{\mathcal{J}(\varepsilon a)\mathcal{L}v}_{L^1(0,T;\fB_{p,1}^{\frac{2}{p}-1})}
        \leq{}&
        C
        \n{\varepsilon \nabla a}_{L^{\infty}(0,T;\fB_{p,1}^{\frac{2}{p}-1})}
        \n{v}_{L^1(0,T;\fB_{p,1}^{\frac{2}{p}+1})},
    \end{aligned}
    \\
    &
    \begin{aligned}
        \n{\frac{1}{\varepsilon}\mathcal{K}(\varepsilon a)\nabla a}_{L^1(0,T;\fB_{p,1}^{\frac{2}{p}-1})}
        \leq{}&
        C
        \n{a}_{L^2(0,T;\fB_{p,1}^{\frac{2}{p}})}^2
    \end{aligned}
\end{align}
that 
\begin{align}
    \n{\Phi^{\varepsilon}[a,v]}_{Z_T^{\varepsilon}}
    \leq
    \n{
    (\ael,\vel)
    }_{Z_T^{\varepsilon}}
    +
    C_0
    \n{(a,v)}_{Z_T^{\varepsilon}}^2
\end{align}
for some positive constant $C_0$ independent of $\varepsilon$ and $T$, where we have set 
\begin{align}
    \begin{pmatrix}
        \ael(t) \\ \vel(t)
    \end{pmatrix}
    :=
    \mathcal{G}^{\ep}(t)  
    \begin{pmatrix}
        a_0 \\ v_0
    \end{pmatrix}.
\end{align} 
By virtue of Lemma \ref{lemm:lin-0-1}, there exists a positive constant $0<\varepsilon_0 \leq 1$ such that 
\begin{align}
    \n{\varepsilon \nabla \ael}_{L^{\infty}(0,\infty;\fB_{p,1}^{\frac{2}{p}-1})}
    \leq
    \frac{1}{8C_0}
\end{align}
for all $0<\varepsilon \leq \varepsilon_0$.
We fix such $\varepsilon$ in the following of this proof.
From the estimate 
\begin{align}
    \n{\Delta_j(\ael, \varepsilon \nabla \ael,\vel)}_{\widehat{L^p}}
    \leq
    C
    e^{-c2^{2j}t}
    \n{\Delta_j(a_0, \nabla a_0,v_0)}_{\widehat{L^p}}
\end{align}
via the Lebesgue dominated convergence theorem,
there exists a positive time $T_0$ such that 
\begin{align}
    \n{(\ael, \varepsilon \nabla \ael)}_{{L^2}(0,T_0;\fB_{p,1}^{\frac{2}{p}})}
    +
    \n{\vel}_{{L^2}(0,T_0;\fB_{p,1}^{\frac{2}{p}})\cap {L^1}(0,T;\fB_{p,1}^{\frac{2}{p}+1})}
    \leq
    \frac{1}{8C_0}.
\end{align}
Then, we see that 
\begin{align}
    \n{(\ael,\vel)}_{Z_{T_0}^{\varepsilon}}
    \leq
    \frac{1}{4C_0}.
\end{align}
Thus, we see that 
\begin{align}
    \n{\Phi^{\varepsilon}[a,v]}_{Z_{T_0}^{\varepsilon}}
    &\leq
    \n{
    (\ael,\vel)
    }_{Z_{T_0}^{\varepsilon}}
    +
    C_0
    \n{(a,v)}_{Z_{T_0}^{\varepsilon}}^2\\
    &\leq
    \frac{1}{4C_0}
    +
    C_0
    \sp{\frac{1}{2C_0}}^2
    =
    \frac{1}{2C_0},
\end{align}
which implies $\Phi[a,v] \in Z_{T_0}^{\varepsilon}$.
Similarly to above, we see that 
\begin{align}
    \n{\Phi^{\varepsilon}[a,v]-\Phi^{\varepsilon}[b,u]}_{Z_{T_0}^{\varepsilon}}
    &\leq
    C_2
    \sp{\n{(a,v)}_{Z_{T_0}^{\varepsilon}}+\n{(b,u)}_{Z_{T_0}^{\varepsilon}}}
    \n{(a-b,v-u)}_{Z_{T_0}^{\varepsilon}}
\end{align}
for all $(a,v),(b,u) \in Z_{T_0}^{\varepsilon}$ some positive constant $C_2$. 
Then, it holds from the choice of $C_0$ that 
\begin{align}
    \n{\Phi^{\varepsilon}[a,v]-\Phi^{\varepsilon}[b,u]}_{Z_{T_0}^{\varepsilon}}
    &\leq
    C_2
    \sp{\n{(a,v)}_{Z_{T_0}^{\varepsilon}}+\n{(b,u)}_{Z_{T_0}^{\varepsilon}}}
    \n{(a-b,v-u)}_{Z_{T_0}^{\varepsilon}}\\
    &\leq
    \frac{C_0}{2}
    \sp{\frac{1}{2C_0}+\frac{1}{2C_0}}
    \n{(a-b,v-u)}_{Z_{T_0}^{\varepsilon}}\\
    &
    =
    \frac{1}{2}
    \n{(a-b,v-u)}_{Z_{T_0}^{\varepsilon}}.
\end{align}
Hence, the contraction mapping principle implies there exists a unique $(\ae,\ve) \in Z_{T_0}^{\varepsilon}$ such that $(\ae,\ve)=\Phi[\ae,\ve]$, which completes the proof.
\end{proof}

\begin{bibdiv}
\begin{biblist}

\bib{Ant-Hie-Mar-20}{article}{
   author={Antonelli, P.},
   author={Hientzsch, L.E.},
   author={Marcati, P.},
   title={On the low Mach number limit for quantum Navier--Stokes equations},
   journal={SIAM J. Math. Anal.},
   volume={52},
   date={2020},
   pages={6105--6139},
}

\bib{Bah-Che-Dan-11}{book}{
   author={Bahouri, Hajer},
   author={Chemin, Jean-Yves},
   author={Danchin, Rapha\"{e}l},
   title={Fourier analysis and nonlinear partial differential equations},
   %series={Grundlehren der mathematischen Wissenschaften [Fundamental Principles of Mathematical Sciences]},
   volume={343},
   publisher={Springer, Heidelberg},
   %date={2011},
   pages={xvi+523},
   %isbn={978-3-642-16829-1},
   %review={\MR{2768550}},
   %doi={10.1007/978-3-642-16830-7},
}
%\bib{Can-Pla-02}{article}{
 %  author={Cannone, Marco},
 %  author={Planchon, Fabrice},
 %  title={More Lyapunov functions for the Navier--Stokes equations},
 %  conference={
  %    title={The Navier--Stokes equations: theory and numerical methods},
   %   address={Varenna},
  %    date={2000},
  % },
 %  book={
  %    series={Lecture Notes in Pure and Appl. Math.},
 %     volume={223},
  %    publisher={Dekker, New York},
%   },
 %  date={2002},
   %pages={19--26},
   %review={\MR{1864494}},
%}
\bib{Cha-Dan-10}{article}{
   author={Charve, Fr\'{e}d\'{e}ric},
   author={Danchin, Rapha\"{e}l},
   title={A global existence result for the compressible Navier--Stokes
   equations in the critical $L^p$ framework},
   journal={Arch. Ration. Mech. Anal.},
   volume={198},
   date={2010},
   %number={1},
   pages={233--271},
   %issn={0003-9527},
   %review={\MR{2679372}},
   %doi={10.1007/s00205-010-0306-x},
}

\bib{Cha-Dan-Xu-21}{article}{
   author={Charve, Fr\'{e}d\'{e}ric},
   author={Danchin, Rapha\"{e}l},
   author={Xu, Jiang},
   title={Gevrey analycity and decay for the compressible Navier-Stokes system with capillarity},
   journal={Indiana University Mathematics Journal.},
   volume={70},
   date={2021},
   pages={1903--1944},
}

\bib{Che-Ler-95}{article}{
   author={Chemin, J.-Y.},
   author={Lerner, N.},
   title={Flot de champs de vecteurs non lipschitziens et \'{e}quations de
   Navier--Stokes},
   language={French},
   journal={J. Differential Equations},
   volume={121},
   date={1995},
   %number={2},
   pages={314--328},
   %issn={0022-0396},
   %review={\MR{1354312}},
   %doi={10.1006/jdeq.1995.1131},
}
%\bib{Che-Mia-Zha-07}{article}{
 %  author={Chen, Qionglei},
 %  author={Miao, Changxing},
%   author={Zhang, Zhifei},
%   title={A new Bernstein's inequality and the 2D dissipative
 %  quasi-geostrophic equation},
 %  journal={Comm. Math. Phys.},
 %  volume={271},
 %  date={2007},
   %number={3},
 %  pages={821--838},
   %issn={0010-3616},
   %review={\MR{2291797}},
   %doi={10.1007/s00220-007-0193-7},
%}
\bib{Che-Mia-Zha-10}{article}{
   author={Chen, Qionglei},
   author={Miao, Changxing},
   author={Zhang, Zhifei},
   title={Well-posedness in critical spaces for the compressible
   Navier--Stokes equations with density dependent viscosities},
   journal={Rev. Mat. Iberoam.},
   volume={26},
   date={2010},
   %number={3},
   pages={915--946},
   %issn={0213-2230},
   %review={\MR{2789370}},
   %doi={10.4171/RMI/621},
}
\bib{Che-Mia-Zha-15}{article}{
   author={Chen, Qionglei},
   author={Miao, Changxing},
   author={Zhang, Zhifei},
   title={On the ill-posedness of the compressible Navier--Stokes equations
   in the critical Besov spaces},
   journal={Rev. Mat. Iberoam.},
   volume={31},
   date={2015},
   %number={4},
   pages={1375--1402},
   %issn={0213-2230},
   %review={\MR{3438393}},
   %doi={10.4171/RMI/872},
}
\bib{Che-Zha-19}{article}{
   author={Chen, Zhi-Min},
   author={Zhai, Xiaoping},
   title={Global large solutions and incompressible limit for the
   compressible Navier--Stokes equations},
   journal={J. Math. Fluid Mech.},
   volume={21},
   date={2019},
   %number={2},
   pages={Paper No. 26, 23},
   %issn={1422-6928},
   %review={\MR{3935027}},
   %doi={10.1007/s00021-019-0428-3},
}

\bib{Chi-18}{article}{
   author={Chikami, N.},
   title={On Gagliardo--Nirenberg type inequalities in Fourier--Herz spaces},
   journal={J. Funct. Anal.},
   volume={275},
   date={2018},
   pages={1138--1172},
}

\bib{Dan-01-L}{article}{
   author={Danchin, Rapha\"{e}l},
   title={Local theory in critical spaces for compressible viscous and
   heat-conductive gases},
   journal={Comm. Partial Differential Equations},
   volume={26},
   date={2001},
   %number={7-8},
   pages={1183--1233},
   %issn={0360-5302},
   %review={\MR{1855277}},
   %doi={10.1081/PDE-100106132},
}
\bib{Dan-00-G}{article}{
   author={Danchin, R.},
   title={Global existence in critical spaces for compressible Navier--Stokes
   equations},
   journal={Invent. Math.},
   volume={141},
   date={2000},
   %number={3},
   pages={579--614},
   %issn={0020-9910},
   %review={\MR{1779621}},
   %doi={10.1007/s002220000078},
}
\bib{Dan-02-T}{article}{
   author={Danchin, Rapha\"{e}l},
   title={Zero Mach number limit for compressible flows with periodic
   boundary conditions},
   journal={Amer. J. Math.},
   volume={124},
   date={2002},
   %number={6},
   pages={1153--1219},
   %issn={0002-9327},
   %review={\MR{1939784}},
}
\bib{Dan-02-R}{article}{
   author={Danchin, Rapha\"{e}l},
   title={Zero Mach number limit in critical spaces for compressible
   Navier--Stokes equations},
   language={English, with English and French summaries},
   journal={Ann. Sci. \'{E}cole Norm. Sup. (4)},
   volume={35},
   date={2002},
   %number={1},
   pages={27--75},
   %issn={0012-9593},
   %review={\MR{1886005}},
   %doi={10.1016/S0012-9593(01)01085-0},
}
%\bib{Dan-05-U}{article}{
%   author={Danchin, Rapha\"{e}l},
%   title={On the uniqueness in critical spaces for compressible
%   Navier--Stokes equations},
 %  journal={NoDEA Nonlinear Differential Equations Appl.},
%  volume={12},
%   date={2005},
   %number={1},
 %  pages={111--128},
   %issn={1021-9722},
   %review={\MR{2138937}},
   %doi={10.1007/s00030-004-2032-2},
%}
\bib{Dan-05-L}{article}{
   author={Danchin, Rapha\"{e}l},
   title={Low Mach number limit for viscous compressible flows},
   journal={M2AN Math. Model. Numer. Anal.},
   volume={39},
   date={2005},
   %number={3},
   pages={459--475},
   %issn={0764-583X},
   %review={\MR{2157145}},
   %doi={10.1051/m2an:2005019},
}
\bib{Dan-14}{article}{
   author={Danchin, Rapha\"{e}l},
   title={A Lagrangian approach for the compressible Navier--Stokes
   equations},
   language={English, with English and French summaries},
   journal={Ann. Inst. Fourier (Grenoble)},
   volume={64},
   date={2014},
   %number={2},
   pages={753--791},
   %issn={0373-0956},
   %review={\MR{3330922}},
}
%\bib{Dan-16}{article}{
%   author={Danchin, Rapha\"{e}l},
%   title={Fourier analysis methods for compressible flows},
 %  conference={
   %   title={Topics on compressible Navier--Stokes equations},
 %  },
 %  book={
  %    series={Panor. Synth\`eses},
%      volume={50},
%      publisher={Soc. Math. France, Paris},
%   },
 %  date={2016},
%   pages={43--106},
   %review={\MR{3616664}},
%}

\bib{Dan-Des-01}{article}{
   author={Danchin, Rapha\"{e}l},
   author={Desjardins, B.},
   title={Existence of solutions for compressible fluid models of Korteweg type},
   journal={Ann. Inst. Henri Poincar\'{e} Anal. nonlinear.},
   volume={18},
   date={2001},
   pages={97--133},
}

\bib{Dan-He-16}{article}{
   author={Danchin, Rapha\"{e}l},
   author={He, Lingbing},
   title={The incompressible limit in $L^p$ type critical spaces},
   journal={Math. Ann.},
   volume={366},
   date={2016},
   %number={3-4},
   pages={1365--1402},
   %issn={0025-5831},
   %review={\MR{3563240}},
   %doi={10.1007/s00208-016-1361-x},
}
%\bib{Dan-Ducom-16}{article}{
 %  author={Danchin, Rapha\"{e}l},
 %  author={Ducomet, Bernard},
 %  title={The low Mach number limit for a barotropic model of radiative
 %  flow},
%   journal={SIAM J. Math. Anal.},
 %  volume={48},
 %  date={2016},
   %number={2},
  % pages={1025--1053},
   %issn={0036-1410},
   %review={\MR{3473591}},
   %doi={10.1137/15M1009081},
%}
\bib{Dan-Muc-17}{article}{
   author={Danchin, Rapha\"{e}l},
   author={Mucha, Piotr Bogus\l aw},
   title={Compressible Navier--Stokes system: large solutions and
   incompressible limit},
   journal={Adv. Math.},
   volume={320},
   date={2017},
   pages={904--925},
   %issn={0001-8708},
   %review={\MR{3709125}},
   %doi={10.1016/j.aim.2017.09.025},
}
\bib{Dan-Muc-19}{article}{
   author={Danchin, Rapha\"{e}l},
   author={Mucha, Piotr Bogus\l aw},
   title={From compressible to incompressible inhomogeneous flows in the
   case of large data},
   journal={Tunis. J. Math.},
   volume={1},
   date={2019},
   %number={1},
   pages={127--149},
   %issn={2576-7658},
   %review={\MR{3907737}},
   %doi={10.2140/tunis.2019.1.127},
}
%\bib{Dan-Xu-17}{article}{
%   author={Danchin, Rapha\"{e}l},
%   author={Xu, Jiang},
%   title={Optimal time-decay estimates for the compressible Navier--Stokes
 %  equations in the critical $L^p$ framework},
 %  journal={Arch. Ration. Mech. Anal.},
 %  volume={224},
 %  date={2017},
   %number={1},
%   pages={53--90},
   %issn={0003-9527},
   %review={\MR{3609245}},
%}
%\bib{Des-Gre-Lio-Mas-99}{article}{
%   author={Desjardins, B.},
 %  author={Grenier, E.},
  % author={Lions, P.-L.},
 %  author={Masmoudi, N.},
  % title={Incompressible limit for solutions of the isentropic Navier--Stokes
  % equations with Dirichlet boundary conditions},
  % journal={J. Math. Pures Appl. (9)},
  % volume={78},
 %  date={1999},
   %number={5},
  % pages={461--471},
   %issn={0021-7824},
   %review={\MR{1697038}},
   %doi={10.1016/S0021-7824(99)00032-X},
%} 
\bib{Des-Gre-99}{article}{
   author={Desjardins, Benoit},
   author={Grenier, E.},
   title={Low Mach number limit of viscous compressible flows in the whole
   space},
   journal={R. Soc. Lond. Proc. Ser. A Math. Phys. Eng. Sci.},
   volume={455},
   date={1999},
   %number={1986},
   pages={2271--2279},
   %issn={1364-5021},
   %review={\MR{1702718}},
   %doi={10.1098/rspa.1999.0403},
}

\bib{Dun-Ser-85}{article}{
   author={Dunn, D.E.},
   author={Serrin, J.},
   title={On the thermomechanics of interstitial working},
   journal={Arch. Rational Mech. Anal.},
   volume={88},
   date={1985},
   pages={95--133},
}

%\bib{Fan-Han-Hi-15}{article}{
%author={Fang, Daoyuan},
%author={Han, B.},
%author={Hieber, M.}, 
%title={Global existence results for the Navier–Stokes equations in the rotational framework in Fourier–Besov spaces},
%journal={Oper. Theory Adv. Appl., Birkh\"{a}user/Springer, Cham,},
%volume={250},
%date={2015},
%pages={199-211},
%}

\bib{Fei-09}{book}{
   author={Feireisl, Eduard},
   author={Novotn\'{y}, Anton\'{\i}n},
   title={Singular limits in thermodynamics of viscous fluids},
   %series={Advances in Mathematical Fluid Mechanics},
   publisher={Birkh\"{a}user Verlag, Basel},
   date={2009},
   %pages={xxxvi+382},
   %isbn={978-3-7643-8842-3},
   %review={\MR{2499296}},
   %doi={10.1007/978-3-7643-8843-0},
}

\bib{Fuj-23-05}{article}{
   author={Fujii, Mikihiro},
   title={Low Mach number limit of the global solution to the compressible Navier--Stokes system for large data in the critical Besov space},
   journal={Math. Ann.},
   date={2023},
   pages={https://doi.org/10.1007/s00208-023-02621-x},
}

%\bib{Ha-Lo-98}{article}{
%	author={Hagstrom, Thomas},
%	author={Lorenz, Jens},
%	title={All-time existence of classical solutions for slightly
%		compressible flows},
%	journal={SIAM J. Math. Anal.},
%	volume={29},
%%	date={1998},
%	number={3},
%	pages={652--672},
%	issn={0036-1410},
%	review={\MR{1617767}},
%	doi={10.1137/S0036141097315312},
%}

\bib{Has-11-1}{article}{
   author={Haspot, Boris},
   title={Well-posedness in critical spaces for the system of compressible
  Navier--Stokes in larger spaces},
   journal={J. Differential Equations},
   volume={251},
   date={2011},
   %number={8},
   pages={2262--2295},
   %issn={0022-0396},
   %review={\MR{2823668}},
   %doi={10.1016/j.jde.2011.06.013},
}

%\bib{Has-11-2}{article}{
 %  author={Haspot, Boris},
 %  title={Existence of global strong solutions in critical spaces for
%   barotropic viscous fluids},
 %  journal={Arch. Ration. Mech. Anal.},
 %  volume={202},
  % date={2011},
   %number={2},
 %  pages={427--460},
   %issn={0003-9527},
   %review={\MR{2847531}},
   %doi={10.1007/s00205-011-0430-2},
%}
%\bib{Has-20}{article}{
%   author={Haspot, Boris},
%   title={Fujita-Kato solution for compressible Navier--Stokes equations with
%   axisymmetric initial data and zero Mach number limit},
%   journal={Commun. Contemp. Math.},
 %  volume={22},
 %  date={2020},
   %number={5},
 %  pages={1950041, 30},
   %issn={0219-1997},
   %review={\MR{4123440}},
   %doi={10.1142/S021919971950041X},
%}

\bib{Has-20-pro}{article}{
   author={Haspot, Boris},
 title={Strong solution for Korteweg system in $bmo^{-1}(\R^N)$ with initial density in $L^{\infty}$},
   journal={ Proc. Lond. Math. Soc. (3)},
   volume={121},
   date={2020},
 pages={1766-1797},
}

%\bib{Hat-Li-96-1}{article}{
 %  author={Hattori, H.},
 % author={Li, D.},
  % title={The existence of global solutions to a fluid dynamic model for materials for Korteweg type},
  % journal={J. Partial Differential Equations.},
  % volume={9},
  % date={1996},
  % pages={323--342},
%}

%\bib{Hat-Li-96-2}{article}{
%   author={Hattori, H.},
 %  author={Li, D.},
 %  title={Global solutions of a high-dimensional system for Korteweg materials},
  % journal={J. Math. Anal. Appl.},
  % volume={198},
 %  date={1996},
 %  pages={84--97},
%}

\bib{Hoff}{article}{
   author={Hoff, David},
   title={The zero-Mach limit of compressible flows},
   journal={Comm. Math. Phys.},
   volume={192},
   date={1998},
   %number={3},
   pages={543--554},
   %issn={0010-3616},
   %review={\MR{1620511}},
   %doi={10.1007/s002200050308},
}
\bib{Iw-Og-22}{article}{
   author={Iwabuchi, Tsukasa},
   author={Ogawa, Takayoshi},
   title={Ill-posedness for the compressible Navier--Stokes equations under
   barotropic condition in limiting Besov spaces},
   journal={J. Math. Soc. Japan},
   volume={74},
   date={2022},
   %number={2},
   pages={353--394},
   %issn={0025-5645},
   %review={\MR{4410314}},
   %doi={10.2969/jmsj/81598159},
}

%\bib{Ju-Xu-22}{article}{
%   author={Ju, Q.},
 %  author={Xu, J.},
 %  title={Zero-Mach limit of the compressible
%Navier--Stokes--Korteweg equations},
%   journal={J. Math. Phys.},
%   volume={63},
%   date={2022},
%   pages={No. 111503, 21 pp},
%}

\bib{Kaw-Shi-Xu-21}{article}{
   author={Kawashima, S.},
   author={Shibata, Y.},
   author={Xu, J.},
   title={The $L^p$ energy methods and decay for the compressible Navier--Stokes equations with capillarity},
   journal={J. Math.Pures Appl.},
   volume={154},
   date={2021},
   pages={146--184},
}

\bib{Kob-Nak-pre}{article}{
   author={Kobayashi, T.},
   author={Nakasato, R.},
   title={On the time-decay with the diffusion wave phenomenon of the solution to the compressible Navier-Stokes-Korteweg system in critical spaces},
   journal={Preprint},
   date={2023},
}

\bib{Kor-1901}{article}{
   author={Korteweg, D.J.},
   title={Sur la forme que prennent les \'{e}quations du mouvements des fluides si l’on tient
compte des forces capillaires caus\'{e}es par des variations de densit\'{e}},
   journal={Arch. Neerl. Sci. Exactes.},
   volume={6},
   date={1901},
   pages={1--24},
}

\bib{Kre-Lor-Nau-91}{article}{
   author={Kreiss, H.-O.},
   author={Lorenz, J.},
   author={Naughton, M. J.},
   title={Convergence of the solutions of the compressible to the solutions
   of the incompressible Navier--Stokes equations},
   journal={Adv. in Appl. Math.},
   volume={12},
   date={1991},
   %number={2},
   pages={187--214},
   %issn={0196-8858},
   %review={\MR{1101207}},
   %doi={10.1016/0196-8858(91)90012-8},
}

%\bib{Kwo-Li-18}{article}{
%   author={Kwon, Y.-S.},
 %  author={Li, Fucai},
  % title={Incompressible limit of the degenerate quantum compressible Navier--Stokes equations with general initial data},
 %  journal={J. Differential Equations.},
  % volume={264},
  % date={2018},
 %  pages={3253--3284},
%}

\bib{Li-Yon-16}{article}{
   author={Li, Y.P.},
   author={Yong, W.-An},
   title={Zero Mach number limit of the compressible Navier--Stokes--Korteweg equations},
   journal={Commun. Math. Sci.},
   volume={14},
   date={2016},
   pages={233--247},
}

%\bib{Mur-Shi-20}{article}{
%   author={Murata, M.},
 %  author={Shibata, Y.},
  % title={The global well-posedness for the compressible fluid model of Korteweg type},
 %  journal={SIAM J. Math. Anal.},
%   volume={52},
%   date={2020},
%   pages={6313--6337},
%}

\bib{Nak-22}{article}{
   author={Nakasato, Ryosuke},
   title={Global well-posedness for the incompressible Hall-magnetohydrodynamic system in critical Fourier-Besov spaces},
   journal={J. Evol. Equ.},
   volume={22},
   date={2022},
   pages={Paper No. 20, 35 pp},
}

%\bib{Pla-00}{article}{
  % author={Planchon, Fabrice},
  % title={Sur un in\'{e}galit\'{e} de type Poincar\'{e}},
%   language={French, with English and French summaries},
  % journal={C. R. Acad. Sci. Paris S\'{e}r. I Math.},
 %  volume={330},
 %  date={2000},
   %number={1},
 %  pages={21--23},
   %issn={0764-4442},
   %review={\MR{1741162}},
   %doi={10.1016/S0764-4442(00)88138-0},
%}
\bib{Sa-18}{book}{
   author={Sawano, Yoshihiro},
   title={Theory of Besov spaces},
   series={Developments in Mathematics},
   volume={56},
   publisher={Springer, Singapore},
   date={2018},
   %pages={xxiii+945},
   %isbn={978-981-13-0835-2},
   %isbn={978-981-13-0836-9},
   %review={\MR{3839617}},
   %doi={10.1007/978-981-13-0836-9},
}

\bib{Wat-23}{article}{
   author={Watanabe, Keiichi},
   title={Global large solutions and incompressible limit for the compressible Navier--Stokes system with capillarity},
   journal={J. Math.Anal.Appl.},
   volume={518},
   date={2023},
   pages={Paper No. 126675, 26 pp},
}

%\bib{Zha-20}{article}{
 %  author={Zhang, S.},
%   title={A class of global large solutions to the compressible Navier-Stokes-Korteweg system in critical Besov spaces},
 %  journal={J. Evol. Equ.},
 %  volume={20},
 %  date={2020},
%   pages={1531--1561},
%}

\end{biblist}
\end{bibdiv}

\end{document}